 
\documentclass[11pt]{amsart}
\usepackage{latexsym,amssymb,amsmath,hyperref,amsthm,amsfonts,
caption,subcaption,tikz,comment}
\usepackage{mdwlist,dirtytalk,times}
\usepackage[capitalise, noabbrev]{cleveref}
\usetikzlibrary{
intersections, arrows.meta,
automata,er,calc,
backgrounds,
mindmap,folding,
patterns,
decorations.markings,
fit,decorations,
shapes,matrix,
positioning,
shapes.geometric,
arrows,through, graphs, graphs.standard
}
\usepackage{amsrefs}

\usepackage{blkarray}

\textwidth=16.00cm
\textheight=22.00cm
\topmargin=0.00cm
\oddsidemargin=0.00cm
\evensidemargin=0.00cm
\headheight=0cm
\headsep=1cm
\headsep=0.5cm 
\numberwithin{equation}{section}
\hyphenation{semi-stable}
\setlength{\parskip}{4pt}

\usetikzlibrary{positioning}
\usetikzlibrary{arrows}

\newtheorem{theorem}{Theorem}[section]
\newtheorem{lemma}[theorem]{Lemma}
\newtheorem{proposition}[theorem]{Proposition}
\newtheorem{corollary}[theorem]{Corollary}

\newtheorem{question}[theorem]{Question}

\theoremstyle{definition}
\newtheorem{definition}[theorem]{Definition} 
\newtheorem{remark}[theorem]{Remark}
\newtheorem{example}[theorem]{Example}

\newcommand{\K}{\mathbb{K}}

\newcommand{\N}{\mathbb{N}}

\newcommand{\R}{\mathbb{R}}
\newcommand{\Q}{\mathbb{Q}}
\newcommand{\Pp}{\mathbb{P}}

\newcommand{\tuple}[1]{\langle #1\rangle}
\newcommand{\st}{\colon}
\newcommand{\Ss}{\mathcal{S}}

\DeclareMathOperator{\co}{co}

\DeclareMathOperator{\HS}{HS}
\DeclareMathOperator{\HF}{HF}
\DeclareMathOperator{\lk}{link}
\DeclareMathOperator{\Soc}{Soc}
\DeclareMathOperator{\htt}{height}
\DeclareMathOperator{\sign}{sign}

\newcommand{\qand}{\quad \mbox{and} \quad}

\newcommand{\qfor}{\quad \mbox{for} \quad}

\newcommand{\qforevery}{\quad \mbox{for every} \quad}

\newcommand{\qwhere}{\quad \mbox{where} \quad}
\newcommand{\qif}{\quad \mbox{if} \quad}

\newcommand{\Sco}{\mathcal{S}^{\text{co}}}
\newcommand{\Kco}{\K^{\text{co}}}

\newcommand{\e}{\varepsilon}

\begin{document}
 
\title{Coinvariant stresses, Lefschetz properties and random complexes}

\author[T. Holleben]{Thiago Holleben}
\address[T. Holleben]
{Department of Mathematics \& Statistics,
Dalhousie University,
6297 Castine Way,
PO BOX 15000,
Halifax, NS,
Canada B3H 4R2}
\email{hollebenthiago@dal.ca}

\keywords{Stresses, Lefschetz properties, inverse systems, random complexes}
\subjclass[2020]{13E10, 13F55, 05E45, 05E40}

 
\begin{abstract}
    Lefschetz properties and inverse systems have played key roles in understanding the $h$-vector of simplicial spheres. In 1996, Lee established connections between these two algebraic tools and rigidity theory, an area often used in the study of motions of geometric complexes. One of the key ideas, is to translate geometric information about a complex, coming from vertex coordinates, to the algebraic notion of a linear system of parameters. 
    In this paper, we explore similar connections in the nonlinear case, by using recent results of Herzog and Moradi (2021) where they prove that a subset of the elementary symmetric polynomials is always a system of parameters for the Stanley-Reisner ideal of a complex. 
    We investigate connections to the study of Lefschetz properties of monomial ideals. Using this perspective, we recover and extend the well known result of Migliore, Miró-Roig and Nagel on the failure of the WLP of monomial almost complete intersections, by showing that, with one simple exception, {\it every} homology sphere has a monomial artinian reduction failing the weak Lefschetz property.                                                      
     
    Finally, we state probabilistic consequences of our results under a model introduced by Linial and Meshulam. We prove that there exists an open interval for the probability parameter where failure of Lefschetz properties of monomial ideals should be expected.  
\end{abstract}

\maketitle


\section{Introduction}

One of the most important problems in the study of simplicial complexes is the $g$-conjecture proposed by McMullen~\cite{M1971}. The conjecture provides a characterization of the $h$-vectors of simplicial spheres, and was recently proven by Adiprasito~\cite{A2018}, with subsequent proofs by Petrotou-Vasiliki~\cite{PP2020}, and Adiprasito-Petrotou-Vasiliki~\cite{APP2021}.

Throughout the years, the conjecture was attacked from several different perspectives, including the theory of rigidity and stresses, and Stanley-Reisner theory. As a consequence, unexpected connections between these two approaches have been developed, starting with a paper by Lee~\cite{L1996}, where it is shown that inverse systems and Macaulay duality can be seen as a bridge between rigidity and Stanley-Reisner theory.

A common theme of the connections between the combinatorial and algebraic approaches to study simplicial spheres, is to translate geometric information such as coordinates of vertices into the algebraic notion of a linear system of parameters (abbreviated linear sop, or lsop). 
The main advantage of using a \emph{linear} sop, is that the algebraic information obtained by studying multiplication maps of linear forms translates directly to the conditions conjectured by McMullen. On the other hand, there does not exist a unique set of linear forms that can be used as a sop for every simplicial sphere.
In view of this drawback, finding alternative sets of homogeneous polynomials that can be used as a sop for every (or at least a large class of) simplicial spheres, becomes an interesting problem. 
 
It is well known in Algebraic Combinatorics that the quotient of the ideal $(e_1, \dots, e_n)$ generated by elementary symmetric polynomials defines a finite dimensional vector space. 
This standard fact implies, in particular, that a subset of elementary symmetric polynomials can be taken as a (non-linear) sop of arbitrary squarefree monomial ideals. This sequence of homogeneous elements is called the \emph{ universal system of parameters}, and has been used before for example in the work of De Concini, Eisenbud and Procesi~\cite{DCEP1982}, Garsia and Stanton~\cite{GS1984}, Smith~\cite{S1990}, Herzog and Moradi~\cite{HM2021} and Adams and Reiner~\cite{AR2023}. 

Motivated by Lee's definition of affine and linear stresses from~\cite{L1996}, and in view of the connections between the universal system of parameters above to the coinvariant ring of the symmetric group, we define \emph{coinvariant stresses} and \emph{$\ell$-coinvariant stresses}~(see~\cref{d:coinvariant}) of simplicial complexes. When $\Delta$ is a $d$-dimensional homology sphere, or equivalently a Cohen-Macaulay orientable pseudomanifold, it is shown that up to multiplication by scalars, $\Delta$ has a unique coinvariant stress of degree $\binom{d+2}{2}$. Our first main result is the following.

\begin{theorem}[{\bf The unique top coinvariant stress of a homology sphere}]\label{t:topintro}
    Let $\Delta$ be a $d$-dimensional homology sphere with facets $F_1, \dots, F_s$ and orientation $\e$. Set $x_{F_i} = \prod_{j \in F_i} x_j$. Then the unique top coinvariant stress of $\Delta$ is (up to multiplication by scalars):
    $$ 
        F_\Delta = \e(F_1) x_{F_1} V(F_1) + \dots + \e(F_s) x_{F_s} V(F_s),
    $$
    where $V(F_i) = \prod_{j < k, \{j,k\} \subset F_i} (x_j - x_k)$ is the Vandermonde determinant on variables $\{x_j \st j \in F_i\}$.
\end{theorem}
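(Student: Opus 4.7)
The plan is to verify that $F_\Delta$ is a nonzero element of the inverse system of $I_\Delta + (e_1, \dots, e_n)$ in degree $\binom{d+2}{2}$; by the uniqueness of the top coinvariant stress of a $d$-dimensional homology sphere (asserted just before the theorem), $F_\Delta$ must then equal it up to scalar. Homogeneity is immediate, since each summand $x_{F_i} V(F_i)$ has degree $(d+1) + \binom{d+1}{2} = \binom{d+2}{2}$. For nonvanishing, I would look at the lex-leading monomial $x_{i_0}^{d+1} x_{i_1}^d \cdots x_{i_d}$ of $x_{F_i} V(F_i)$ when $F_i = \{i_0 < \dots < i_d\}$: its support equals $F_i$, so it cannot arise from $x_{F_j} V(F_j)$ with $F_j \neq F_i$, forcing its coefficient in $F_\Delta$ to be $\pm \e(F_i) \neq 0$.

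Annihilation by $I_\Delta$ under contraction is immediate: for a minimal non-face $\sigma$ and any facet $F_i$, there exists $j \in \sigma \setminus F_i$, and $x_j$ is absent from every monomial of $x_{F_i} V(F_i)$, so $x_\sigma \circ F_\Delta = 0$. The main step is annihilation by the elementary symmetric polynomials. Since $x_{F_i} V(F_i)$ is supported on $F_i$, only subsets $S \subseteq F_i$ contribute to $e_k \circ (x_{F_i} V(F_i))$, reducing it to $e_k^{F_i} \circ (x_{F_i} V(F_i))$, where $e_k^{F_i}$ is the $k$th elementary symmetric polynomial in the variables of $F_i$. Because $x_{F_i} V(F_i)$ is alternating in these variables and $e_k^{F_i}$ is symmetric, the contraction is again alternating, hence of the form $V(F_i) \cdot Q_k(x_{F_i})$ for some symmetric polynomial $Q_k$ in $d+1$ variables depending only on $k$. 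The annihilation $e_k \circ F_\Delta = 0$ thus reduces to the family of polynomial identities
\[
\sum_i \e(F_i)\, V(F_i)\, Q_k(x_{F_i}) = 0 \quad \text{for each } k=1,\dots,n,
\]
which at $k = d+1$ specialize to $\sum_i \e(F_i)\, V(F_i) = 0$. To prove these identities I would expand each summand in monomials, reorganize the double sum by the support of each monomial, and match the signed sum over facets containing a fixed subface with the simplicial boundary identity $\partial F = 0$, which says that the two facets sharing any ridge contribute with opposite signs.

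The main obstacle is precisely this final identification: the sign of $V(F_i)$ relative to $V(G)$ for a ridge $G \subset F_i$ depends on the position of the extra vertex $F_i \setminus G$ in the chosen vertex ordering, so the algebraic Vandermonde signs have to be aligned carefully with the combinatorial boundary signs of the orientation. I would expect the cleanest way to complete this step is to fix a global ordering of the vertices once and for all, absorb the resulting sign into $\e$, and then use $\partial F = 0$ to force the coefficient of every monomial supported on a proper face of $\Delta$ to vanish; monomials whose support is not contained in any facet contribute zero trivially.
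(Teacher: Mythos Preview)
Your plan is sound and closely parallels the paper's argument, but with one structural twist that you should make explicit. The paper does not pass through the factorization $e_k^{F_i}\circ(x_{F_i}V(F_i))=V(F_i)\,Q_k(x_{F_i})$; instead it computes the coefficient of each monomial $m$ in $e_k\circ F_\Delta$ directly and splits into three cases according to $|\supp(m)|$: fewer than $d$ (trivially zero, since contracting by a squarefree monomial lowers at most one exponent to zero), exactly $d+1$ (only one facet contributes, and the signs cancel in transposition pairs), and exactly $d$ (the ridge case, handled by $\partial F=0$). Your factorization is actually a genuine simplification of this case split, but you do not exploit it: since $V(F_i)Q_k$ is alternating in the $d+1$ variables of $F_i$ and has degree $\binom{d+2}{2}-k<\binom{d+2}{2}$, every strict exponent sequence occurring in it must have smallest part $0$, so \emph{every} monomial of $V(F_i)Q_k$ is supported on exactly $d$ vertices of $F_i$. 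This disposes of the paper's ``support $<d$'' and ``support $=d+1$'' cases in one stroke. You should say so, because otherwise your sentence about using $\partial F=0$ on ``every monomial supported on a proper face'' appears to ignore the facet-supported monomials, which $\partial F=0$ alone cannot kill.

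For the remaining ridge case your sign worry is real but resolves cleanly, in essentially the same way as in the paper. Fix a ridge $\tau$ and a facet $F=\tau\cup\{v\}$; evaluating $V(F)Q_k(x_F)$ at $x_v=0$ gives $(-1)^{d+1-r}\,x_\tau V(\tau)\,Q_k(x_\tau,0)$, where $r$ is the position of $v$ in the sorted list of $F$. The factor $x_\tau V(\tau)\,Q_k(x_\tau,0)$ depends only on $\tau$ and $k$, not on which facet $F\supset\tau$ we chose, so the coefficient of any $\tau$-supported monomial in $\sum_i\e(F_i)V(F_i)Q_k$ is a constant times $\sum_{F\supset\tau}\e(F)(-1)^{d+1-r(F)}$. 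Since $(-1)^{d+1-r}=(-1)^d(-1)^{r-1}$ and $(-1)^{r-1}$ is precisely the sign of $\tau$ in $\partial F$, this sum is $(-1)^d$ times the $\tau$-coefficient of $\partial\bigl(\sum_i\e(F_i)F_i\bigr)=0$. That is the alignment you were looking for, and it does not require absorbing any auxiliary sign into $\e$.
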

 
Applying~\cref{t:topintro} when $\Delta$ is the boundary of a simplex, we recover the classical fact that the Vandermonde determinant is the Macaulay dual generator of the ideal $(e_1,\dots,e_n)$ generated by elementary symmetric polynomials.

An artinian algebra $A$ is said to satisfy the \emph{weak Lefschetz property} (abbreviated WLP) if there exists a linear form $\ell \in A_1$ such that the multiplication maps $\times \ell: A_i \to A_{i + 1}$ have full rank for every $i$. If the maps $\times \ell^j: A_{i} \to A_{i + j}$ have full rank for every $i, j$, then $A$ satisfies the \emph{strong Lefschetz property} (abbreviated SLP). The study of Lefschetz properties began when Stanley~\cite{S80} showed that for every monomial complete intersection $I \subset R$, the artinian ring $R/I$ has the SLP.

One of the first interesting examples of a (level) monomial algebra failing the WLP was presented by Brenner and Kaid~\cite{BK2007}, where they used techniques from algebraic geometry to conclude that the algebra 
$$
    \frac{\K[x,y,z]}{(xyz, x^3, y^3, z^3)}
$$ 
fails the WLP. In~\cite{MMN2011}, Migliore, Miró-Roig and Nagel then used techniques from liaison theory to extend the example, showing that for every $n$, the algebra 
$$
    \frac{\K[x_1,\dots,x_n]}{(x_1\dots x_n, x_1^n,\dots, x_n^n)}
$$
fails the WLP. These algebras are called \emph{monomial almost complete intersections}, and can be defined as the quotient of $I_\Delta + (x_1^{d+2}, \dots, x_n^{d + 2})$, where $I_\Delta$ is the Stanley-Reisner ideal of the boundary of the $(d + 1)$--dimensional simplex. A key step in their proof is the fact that the Vandermonde determinant is in the kernel of a specific map.
  
When~\cref{t:topintro} is applied to the boundary of the simplex, we recover the Vandermonde determinant. A natural question then is whether the result about monomial almost complete intersections from~\cite{MMN2011} can be generalized, or at least proven using coinvariant stresses. Our second main result is the following result, which uses a slightly more general version of~\cref{t:topintro}.

\begin{theorem}[{\bf Nonvanishing homology and the failure of the WLP}]\label{t:failureintro}
    Let $d > 0$ and $\Delta$ be a $d$-dimensional complex such that $f_{d-1} \geq f_d$ and $\tilde H_d(\Delta; \K) \neq 0$. Denote the Stanley-Reisner ideal of $\Delta$ by $I_\Delta \subset R = \K[x_1,\dots, x_n]$. Then the algebra 
    $$
        \frac{R}{I_\Delta + (x_1^{d+2},\dots,x_n^{d+2})}
    $$
    fails the WLP.
\end{theorem}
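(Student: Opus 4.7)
The plan is to exhibit a nontrivial cokernel of multiplication by $e_1 = x_1 + \dots + x_n$ on $A = R/J$, where $J = I_\Delta + (x_1^{d+2}, \dots, x_n^{d+2})$, and then upgrade this to failure of the WLP for every linear form by a rescaling argument. Fix a nonzero representative $z = \sum_i c_i F_i$ of a class in $\tilde H_d(\Delta;\K)$ and form
\[
F_z = \sum_i c_i\, x_{F_i} V(F_i) \in R_{\binom{d+2}{2}}.
\]
A direct check places $F_z$ in the Macaulay inverse system $J^{-1}$: every monomial of $x_{F_i}V(F_i)$ has support equal to $F_i$ (a face of $\Delta$) and exponents lying in $\{1,\dots,d+1\}$. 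Moreover, because monomials coming from distinct facets have distinct supports and cannot cancel, $F_z \neq 0$ whenever $z \neq 0$. This is precisely the $d$-cycle version of~\cref{t:topintro} advertised in the excerpt.

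The crucial algebraic input is $e_1 \circ F_z = 0$. I would derive this from the stronger assertion that $F_z$ is translation-invariant, i.e.\ $F_z(x_1+t, \dots, x_n+t) = F_z(x_1,\dots,x_n)$. Since each $V(F_i)$ is itself translation-invariant, expanding $\prod_{j \in F_i}(x_j+t) = \sum_{k=0}^{d+1} t^{d+1-k} e_k(F_i)$ reduces the claim to the vanishing identities
\[
\Phi_k := \sum_i c_i\, e_k(F_i)\, V(F_i) = 0, \qquad 0 \leq k \leq d,
\]
which should follow from $\partial z = 0$ together with the Laplace expansion $V(F_i) = \e(F_i, G)\, V(G) \prod_{v \in G}(x_{j_i} - x_v)$ whenever $F_i = G \sqcup \{j_i\}$, matching the signs $\e(F_i, G)$ against the boundary map. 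This combinatorial identity is the main technical obstacle of the proof. Once it is established, differentiating translation invariance at $t=0$ gives $\sum_i \partial_i F_z = e_1 \circ F_z = 0$, so by Macaulay duality $F_z$ represents a nonzero cokernel class of $\times e_1 \colon A_{\binom{d+2}{2}-1} \to A_{\binom{d+2}{2}}$.

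Next I would verify the Hilbert-function inequality $\dim A_{\binom{d+2}{2}-1} \geq \dim A_{\binom{d+2}{2}}$, where the hypothesis $f_{d-1}\geq f_d$ enters. Decomposing $\dim A_k = \sum_{F \in \Delta} P(|F|,k)$, where $P(s,k)$ counts compositions of $k$ into $s$ positive parts each $\leq d+1$, the function $k \mapsto P(s,k)$ is symmetric and unimodal about $s(d+2)/2$. At $k = \binom{d+2}{2} = (d+1)(d+2)/2$, facets ($s=d+1$) sit exactly at the peak of $P(d+1,\cdot)$ while ridges ($s=d$) are immediately past the peak of $P(d,\cdot)$; an elementary count gives the balancing identity $P(d+1,k) - P(d+1,k-1) = P(d,k-1) - P(d,k) =: C_d > 0$, so the facet ``loss'' is matched exactly by the ridge ``gain'', and the contributions of faces of size $s \leq d-1$ to $h_{k-1}-h_k$ are nonnegative by unimodality. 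Hence $h_{k-1} - h_k \geq C_d(f_{d-1}-f_d) \geq 0$, so the nonzero cokernel obstructs $\times e_1$ from having full rank.

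Finally, for any $\ell = \sum a_i x_i$ with all $a_i \neq 0$, the rescaling $x_i \mapsto a_i x_i$ is a graded automorphism of $R$ that preserves $J$ (since $I_\Delta$ is monomial and $(x_i^{d+2})$ is stable up to units) and sends $\ell$ to $e_1$. Hence $\times\ell$ and $\times e_1$ have equal ranks in every degree, so WLP fails for every such $\ell$. Since this is a Zariski-dense open subset of $R_1$, WLP fails generically, and therefore for $A$.
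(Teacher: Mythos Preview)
Your overall architecture matches the paper's: produce $F_z=\sum_i c_i\,x_{F_i}V(F_i)$ from a $d$-cycle, show it obstructs surjectivity of $\times e_1$ in degree $\binom{d+2}{2}$, prove the Hilbert-function inequality from composition counts, and reduce from a general linear form to $e_1$. The Hilbert-function part and the rescaling step are essentially the paper's arguments (the ``balancing identity'' you invoke is exactly \cref{l:linkageb}; the paper proves it by basic double linkage but notes an elementary proof exists, and your torus-rescaling is just \cite[Proposition~2.2]{MMN2011}).

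The genuine gap is in the step ``differentiating translation invariance at $t=0$ gives $\sum_i \partial_i F_z = e_1 \circ F_z = 0$.'' Translation invariance yields $\sum_i\partial_i F_z = e_1\bullet F_z = 0$, \emph{not} $e_1\circ F_z=0$; these operators do not agree (e.g.\ $(x_1-x_2)^2$ is translation invariant but $e_1\circ(x_1-x_2)^2=-x_1-x_2\neq 0$). In characteristic~$0$ this is harmless, since the differentiation pairing $\langle x^a,x^b\rangle=a!\,\delta_{a,b}$ is nondegenerate on $A_k$ and makes $e_1\bullet$ the adjoint of $\times e_1$, so $e_1\bullet F_z=0$ with $F_z\neq 0$ still forces non-surjectivity. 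But the theorem is stated for an arbitrary field, and in characteristic $p\le d+1$ that pairing is degenerate and the conclusion fails. Concretely, for $\Delta=\partial\Delta_2$ (so $d=1$) over $\K=\mathbb F_2$ one has $F_z=x_1^2x_2+x_1x_2^2+x_1^2x_3+x_1x_3^2+x_2^2x_3+x_2x_3^2$, and
\[
F_z \;=\; e_1\cdot(x_1x_2+x_1x_3+x_2x_3) \quad \text{in } A=R/J,
\]
so $F_z$ does \emph{not} represent a nonzero cokernel class, even though $e_1\bullet F_z=0$ and $\times e_1$ is genuinely non-surjective there.

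The paper avoids this by working with contraction throughout: it proves directly that $e_k\circ F_z=0$ for all $1\le k\le d+1$ by a monomial-by-monomial sign-cancellation argument (\cref{l:symmetriccirc}), splitting into the cases where the target monomial has support of size $d+1$ (internal cancellation from permutation signs) or size $d$ (cancellation coming exactly from $\partial z=0$). Since contraction is the transpose of multiplication in the monomial basis (\cref{l:samematrix}), the relevant pairing is the standard one $\langle x^a,x^b\rangle=\delta_{a,b}$, which is nondegenerate over every field. Your $\Phi_k=0$ identities are true and closely related in spirit, but they are not the identities that are needed; if you want a characteristic-free proof you must establish $e_1\circ F_z=0$ itself, and the translation-invariance route does not give it.
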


\cref{t:failureintro} can be seen as a vast generalization of the result from~\cite{MMN2011}. More specifically, when $\Delta$ is the boundary of a simplex, we recover the fact that monomial almost complete intersections fail the WLP from~\cite{MMN2011}.

The techniques we use to prove~\cref{t:failureintro} are based on properties of $f$-vectors of simplicial complexes, inverse systems and liaison theory. In particular, the proof of~\cref{t:failureintro} depends on understanding inequalities that $f$-vector of special classes of simplicial complexes must satisfy. Although in the present paper we reduce the required inequalities to previously known ones, due to the wide range of techniques that have been used in the study of Lefschetz properties, it is clear that understanding Lefschetz properties of monomial ideals is a powerful tool for understanding the $f$-vectors of simplicial complexes.

\cref{t:failureintro,t:topintro} suggest the study of Lefschetz properties of monomial ideals is directly related to the study of (not necessarily linear) systems of parameters for monomial ideals. In the linear setting, several results on the positivity of $g$-numbers are known. Throughout the paper, we put some of these results in the context of Lefschetz properties of monomial ideals to show that certain multiplication maps are never surjective.

It should be noted that~\cref{t:failureintro} is a surprising result: in~\cite{NP2024}, Nagel and Petrović did extensive computations that suggest failure of the WLP is a rare phenomenon in most scenarios for level monomial algebras. \cref{t:failureintro} not only shows that a large class of ideals fails the WLP, but it shows that a property often associated with algebras having the WLP (being Gorenstein), in the monomial setting, can imply failure.

\begin{corollary}[{\bf The Gorenstein property in the monomial setting}]\label{c:gorensteinfailureintro}
    Let $I \subset R = \K[x_1, \dots, x_n]$ be a Gorenstein squarefree monomial ideal such that every variable of $R$ divides a generator of $I$, and $d + 1 = \dim R/I > 1$. Then the algebra 
    $$
        \frac{R}{I + (x_1^{d + 2}, \dots, x_n^{d + 2})}
    $$
    fails the WLP.
\end{corollary}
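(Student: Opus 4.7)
The plan is to reduce the statement directly to~\cref{t:failureintro} by verifying its two hypotheses for the simplicial complex $\Delta$ with $I_\Delta = I$. The assumption $\dim R/I = d+1$ gives $\dim \Delta = d$, and the assumption that every variable of $R$ divides some generator of $I$ translates combinatorially to the condition that no vertex of $\Delta$ lies in every facet, i.e.\ that $\Delta$ is not a cone. Under these conditions Stanley's classification of Gorenstein Stanley--Reisner rings forces $\Delta$ to be a Gorenstein$^*$ complex, and these are precisely the $\K$-homology spheres of their dimension. In particular, this immediately produces the nonvanishing $\tilde H_d(\Delta;\K)\neq 0$ required by~\cref{t:failureintro}.

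The remaining hypothesis $f_{d-1}(\Delta) \geq f_d(\Delta)$ will follow from the symmetry of the $h$-vector of $\Delta$. Using the standard expression
\[
    f_{i-1} = \sum_{j=0}^{i} \binom{d+1-j}{i-j} h_j
\]
specialized to $i = d$ and $i = d+1$, together with $h_0 = h_{d+1} = 1$ (coming from Gorenstein$^*$) and the Cohen--Macaulay nonnegativity $h_j \geq 0$, a short bookkeeping computation yields
\[
    f_{d-1} - f_d = (d-1) + \sum_{j=1}^{d-1} (d-j)\, h_j,
\]
which is nonnegative for every $d \geq 1$. With both hypotheses of~\cref{t:failureintro} in hand, that theorem delivers the failure of the WLP for $R/(I + (x_1^{d+2},\dots,x_n^{d+2}))$.

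I do not anticipate any serious obstacle here. The only mildly delicate step is recognizing that the hypothesis \emph{every variable divides a generator of $I$} is exactly the condition that $\Delta$ equals its own core, which is what upgrades the bare Gorenstein assumption to the Gorenstein$^*$ property and thereby simultaneously delivers both the top homology nonvanishing and the $h$-vector symmetry used above. The $f$-vector inequality itself is then a routine manipulation, and the conclusion is an immediate application of~\cref{t:failureintro}.
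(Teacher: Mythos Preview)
Your proposal is correct and follows essentially the same route as the paper: identify $\Delta$ with $I_\Delta=I$, use the ``not a cone'' hypothesis together with Stanley's characterization (\cref{t:gorensteinequivalences}) to conclude $\Delta$ is a $\K$-homology sphere, and then feed the two hypotheses into \cref{t:failureintro}. The only difference is in how you verify $f_{d-1}\geq f_d$: the paper invokes the pseudomanifold double-counting identity $2f_{d-1}=(d+1)f_d$ (every ridge of a homology sphere lies in exactly two facets), which gives the inequality in one line for $d\geq 1$, whereas you go through the $h$-vector formula and the Dehn--Sommerville symmetry to reach the same conclusion. Both arguments are valid; the paper's is shorter, but yours has the mild advantage of making the slack $f_{d-1}-f_d$ explicit.
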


Note that although the assumption on the divisibility of generators by variables in~\cref{c:gorensteinfailureintro} is easily satisfied, the result is not true without it. When $I = 0$ the algebra has the SLP by a foundational result of Stanley~\cite{S80}.

The work of Nagel and Petrović in~\cite{NP2024} leads to several interesting questions, and most importantly, to the idea of {\emph{when should the WLP of monomial ideals be expected}}. \cref{t:failureintro} allows us to translate questions about how common are Lefschetz properties of monomial ideals, to questions about vanishing of homology groups of random complexes.
 
In~\cite{LM2006}, Linial and Meshulam considered a generalized Erdős–Rényi model, where the probability space $Y_d(n, p)$ consists of $d$-dimensional simplicial complexes $Y$ on $n$ vertices such that $Y$ contains every possible $(d-1)$--face, and the probability measure is given by 
$$
    \Pp(Y \in Y_d(n, p)) = p^{f_d} (1-p)^{\binom{n}{d + 1} - f_d},
$$
where $f_d$ is the number of $d$-faces of $Y$.

Due to the original results from~\cite{ER1960} on the classical Erdős–Rényi model, a natural question to consider is if there exists a threshold for the value of $p = \frac{c}{n}$ such that 
$$
    \lim_{n \to \infty} \Pp(Y \in Y_d(n, \textstyle \frac{c}{n}) \st \tilde H_d(Y; \K) \neq 0) = 1.
$$
This question and its variations have been approached by several authors (see for example~\cites{K1987,ALLTM2013,MW2009}). We apply~\cref{t:failureintro} together with one of the main results of~\cite{ALLTM2013} to show the following.

\begin{theorem}[{\bf Sometimes failure should be expected}]
    Given a $d$-dimensional simplicial complex $Y \in Y_d(n, p)$, let $A_Y = \frac{\K[x_1,\dots, x_n]}{I_Y + (x_1^{d + 2}, \dots, x_n^{d + 2})}$, where $I_Y$ is the Stanley-Reisner ideal of $Y$ and $\K$ is a field.
    \begin{enumerate}
        \item If $d = \lceil sn \rceil < n$ for some $\frac{1}{2} \leq s < 1$, then for every $p$:
        $$
            \Pp(Y \in Y_d(n, p) \st A_Y \mbox{ fails the WLP}) \geq \Pp(Y \in Y_d(n, p) \st \tilde H_d(Y; \K) \neq 0).
        $$
        \item For every $d > 0$, there exists a non empty interval $(c_d, d + 1)$ such that for every $c \in (c_d, d + 1)$:
        $$
            \lim_{n \to \infty} \Pp(Y \in Y_d(n, \textstyle \frac{c}{n}) \st A_Y \mbox{ fails the WLP}) = 1.
        $$
    \end{enumerate}
\end{theorem}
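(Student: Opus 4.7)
By Theorem~\ref{t:failureintro}, for a fixed $d$-dimensional complex $Y$, failure of the WLP for $A_Y$ is implied by the conjunction of \textup{(i)} $f_{d-1}(Y) \geq f_d(Y)$ and \textup{(ii)} $\tilde H_d(Y;\K) \neq 0$. The plan is to establish each condition in the appropriate random regime.

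For part (1), condition (i) is automatic for every $Y$ in the sample space: since $Y \in Y_d(n,p)$ contains all $(d-1)$-faces, $f_{d-1}(Y) = \binom{n}{d}$ while $f_d(Y) \leq \binom{n}{d+1}$. Using $\binom{n}{d+1}/\binom{n}{d} = (n-d)/(d+1)$, the inequality $\binom{n}{d} \geq \binom{n}{d+1}$ reduces to $d \geq (n-1)/2$, which holds whenever $d = \lceil sn \rceil$ with $s \geq \tfrac{1}{2}$. Thus $\tilde H_d(Y;\K) \neq 0$ already forces $A_Y$ to fail the WLP by Theorem~\ref{t:failureintro}, so the pointwise event inclusion $\{\tilde H_d(Y;\K) \neq 0\} \subseteq \{A_Y \text{ fails WLP}\}$ gives the desired inequality of probabilities.

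For part (2), the strategy is to show that both (i) and (ii) hold with probability tending to $1$ when $p = c/n$ and $c$ lies in a suitable interval. For (ii) I would invoke the main result of~\cite{ALLTM2013}, which provides a constant $c_d < d+1$ such that for every $c \in (c_d, d+1)$,
\begin{equation*}
\lim_{n \to \infty} \Pp\bigl(Y \in Y_d(n, \tfrac{c}{n}) \st \tilde H_d(Y;\K) \neq 0\bigr) = 1.
\end{equation*}
For (i), observe that $f_d(Y)$ is a $\mathrm{Binomial}\bigl(\binom{n}{d+1},\, c/n\bigr)$ variable with mean $\tfrac{c(n-d)}{(d+1)n}\binom{n}{d}$; as this prefactor is bounded above by $c/(d+1) < 1$, a standard Chernoff bound gives $\Pp(f_d(Y) \leq f_{d-1}(Y)) \to 1$. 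The intersection of two events of probability tending to $1$ still has probability tending to $1$, and on this intersection Theorem~\ref{t:failureintro} yields failure of the WLP.

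The main obstacle is to locate and confirm the strict inequality $c_d < d+1$ in~\cite{ALLTM2013}: the upper endpoint $d+1$ of the interval in part (2) is exactly the critical density above which the Chernoff argument for $f_{d-1} \geq f_d$ breaks down, so nonemptiness of $(c_d,d+1)$ is precisely the topological input required. The remaining concentration estimates are routine.
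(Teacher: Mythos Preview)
Your proposal is correct and follows essentially the same approach as the paper: both parts reduce to the deterministic implication of Theorem~\ref{t:failureintro} plus the concentration of $f_d$ (Chernoff for a binomial) and the topological input from~\cite{ALLTM2013} that $c_d < d+1$. Your phrasing via event inclusion and ``intersection of high-probability events'' is slightly cleaner than the paper's inclusion--exclusion computation, but the content is identical.
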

Finally, in ~\cref{s:future} we collect questions that follow from our work.

\section{Preliminaries}

A {\bf simplicial complex} $\Delta$ is a collection of $V$, called {\bf faces}, such that if $\sigma \in \Delta$ and $\tau \subset \sigma$, then $\tau \in \Delta$. Maximal faces of $\Delta$ are called {\bf facets}, if the facets of $\Delta$ are $F_1, \dots, F_s$, we write $\Delta = \tuple{F_1, \dots, F_s}$. If every facet of $\Delta$ has the same size, we say $\Delta$ is {\bf pure}. The dimension of a face $\sigma$ of $\Delta$ is $\dim \sigma = |\sigma| - 1$, and the dimension of $\Delta$ is $\dim \Delta = \max (\dim \sigma \st \sigma \in \Delta)$. If $\Delta$ is a $d$--dimensional simplicial complex, its $(d-1)$--dimensional faces are also called {\bf ridges}. Given a face $\sigma \in \Delta$, the {\bf link} of $\sigma$ in $\Delta$, is the simplicial complex $\lk_\Delta(\sigma) = \{\tau \st \tau \cup \sigma \in \Delta \qand \tau \cap \sigma = \emptyset\}$. A simplicial complex $\Delta$ is said to be a {\bf cone} over one of its faces $\sigma$ if $\sigma \cup \tau \in \Delta$ for every $\tau \in \Delta$. To every simplicial complex $\Delta$, we may associate its geometric realization $|\Delta|$, a topological space that is a subset of $\R^k$ for some $k$, and to every face $\sigma$ of $\Delta$, $|\Delta|$ contains the convex hull of the points corresponding to vertices of $\sigma$ (see~\cite[Chapter 1]{M2003} for more details on the construction). If $|\Delta|$ is homeomorphic to a sphere, we say $\Delta$ is a {\bf simplicial sphere}.

The {\bf $f$-vector} of a $d$-dimensional complex $\Delta$ is the sequence of numbers $f(\Delta) = (f_{-1}, \dots, f_d)$, where $f_i$ is the number of $i$-dimensional faces of $\Delta$. The {\bf $h$-vector} of $\Delta$ is the sequence of numbers $h(\Delta) = (h_0, \dots, h_{d + 1})$ where the entries are given by the polynomial relations 
$$
    \sum_{i = 0}^{d + 1} f_{i - 1} (t - 1)^{d + 1 - i} = \sum_{i = 0}^{d + 1} h_i t^{d + 1 - i}.
$$
The {\bf $g$-vector} of $\Delta$ is the sequence of numbers $g(\Delta) = (g_0, \dots, g_{\lceil \frac{d + 1}{2} \rceil})$, where $g_0 = 1$ and $g_i = h_i - h_{i - 1}$ for $i > 0$.

\begin{definition}
    A simplicial complex $\Delta$ is a $d$-dimensional {\bf pseudomanifold} if the following conditions hold:
    \begin{enumerate}
        \item $\Delta$ is pure,
        \item $\Delta$ is {\bf strongly connected}, in other words, for every pair of facets $A, B$ of $\Delta$, there exists a sequence of facets $G_1, \dots, G_r$ such that $G_i \cap G_{i+1}$ is a $(d-1)$--dimensional face for every $i$, and $A = G_1$, $B = G_r$, and
        \item every ridge of $\Delta$ is contained in at most $2$ facets of $\Delta$.
    \end{enumerate}
    The {\bf boundary} of a pseudomanifold $\Delta$ is the set of ridges of $\Delta$ contained in only one facet. If the boundary of $\Delta$ is empty, we say $\Delta$ is a pseudomanifold without boundary.
  
    A $d$-dimensional pseudomanifold without boundary $\Delta = \tuple{F_1, \dots, F_s}$ is said to be {\bf orientable} over a field $\K$ if $\tilde H_{d}(\Delta; \K) \neq 0$. A nonzero element $\varepsilon = \varepsilon(F_1) F_1 + \dots + \varepsilon(F_s) F_s \in \tilde H_d(\Delta; \K)$ is called an {\bf orientation} of $\Delta$.

\end{definition}


\begin{example}[{\bf The projective plane: a non-orientable pseudomanifold}]\label{e:projective}
    Let 
    $$
        \Sigma = \tuple{123,125,245,345,134,146,156,356,236,246}
    $$
    be the following triangulation of $\R \Pp^2$.
    \begin{center}

\tikzset{every picture/.style={line width=0.75pt}} 

\begin{tikzpicture}[x=0.75pt,y=0.75pt,yscale=-1,xscale=1]

\draw  [fill={rgb, 255:red, 155; green, 155; blue, 155 }  ,fill opacity=1 ] (184.79,159) -- (129.47,126.82) -- (129.68,62.82) -- (185.21,31) -- (240.53,63.18) -- (240.32,127.18) -- cycle ;
\draw    (185,75) -- (185.21,31) ;
\draw   (185,75) -- (209.92,115) -- (160.08,115) -- cycle ;
\draw    (185,75) -- (129.68,62.82) ;
\draw    (185,75) -- (240.53,63.18) ;
\draw    (240.32,127.18) -- (209.92,115) ;
\draw    (129.47,126.82) -- (160.08,115) ;
\draw    (184.79,159) -- (160.08,115) ;
\draw    (184.79,159) -- (209.92,115) ;
\draw    (160.08,115) -- (129.68,62.82) ;
\draw    (209.92,115) -- (240.53,63.18) ;

\draw (180,12) node [anchor=north west][inner sep=0.75pt]   [align=left] {$\displaystyle 1$};
\draw (247,63) node [anchor=north west][inner sep=0.75pt]   [align=left] {$\displaystyle 3$};
\draw (113,64) node [anchor=north west][inner sep=0.75pt]   [align=left] {$\displaystyle 5$};
\draw (187.11,56) node [anchor=north west][inner sep=0.75pt]   [align=left] {$\displaystyle 2$};
\draw (167,115) node [anchor=north west][inner sep=0.75pt]   [align=left] {$\displaystyle 4$};
\draw (193,115) node [anchor=north west][inner sep=0.75pt]   [align=left] {$\displaystyle 6$};
\draw (115,128) node [anchor=north west][inner sep=0.75pt]   [align=left] {$\displaystyle 3$};
\draw (242.32,130.18) node [anchor=north west][inner sep=0.75pt]   [align=left] {$\displaystyle 5$};
\draw (180,162) node [anchor=north west][inner sep=0.75pt]   [align=left] {$\displaystyle 1$};

\end{tikzpicture}
    \end{center}
    The complex $\Sigma$ is an example of a non-orientable pseudomanifold over a field $\K$ of characteristic zero.
\end{example}


A simplicial complex $\Delta$ is called a {\bf $\K$-homology sphere} if 
$$
    \tilde H_i(\lk_\Delta(\sigma); \K) \cong \begin{cases}
        \K \qif i = \dim \lk_\Delta(\sigma) \\
        0 \quad \mbox{ otherwise}
    \end{cases} \qforevery \sigma \in \Delta.
$$
Given $\tau \subset V$ and $R = \K[x_v \st v \in V]$, we set $x_\tau = \prod_{i \in \tau} x_i$. The {\bf Stanley-Reisner ideal} of a simplicial complex $\Delta$ is the squarefree monomial ideal $I_\Delta = (x_\tau \st \tau \not \in \Delta) \subset \K[x_v \st v \in V]$. 
For a homogeneous ideal $I \subset R$, we denote the {\bf Hilbert series} of $R/I$, and the {\bf Hilbert function} of $R/I$ by
$$
    \HS_{\frac{R}{I}}(t) = \sum_i \dim \Big{(}\frac{R}{I}\Big{)}_i t^i \qand \HF_{\frac{R}{I}}(n) = \dim \Big{(}\frac{R}{I}\Big{)}_n. 
$$

The following result due to Stanley~\cite{S1996B} will be useful in later sections.

\begin{theorem}[{\cite[Theorem 5.1, Chapter 2]{S1996B}}]\label{t:gorensteinequivalences}
    Let $\Delta$ be a simplicial complex on vertex set $V$ such that $\Delta$ is not a cone over any of its vertices, $\K$ a field and $I_\Delta \subset R = \K[x_v \st v \in V]$ its Stanley-Reisner ideal. The following are equivalent
    \begin{enumerate}
        \item $R/I_\Delta$ is Gorenstein,
        \item $\Delta$ is a $\K$-homology sphere
        \item $\Delta$ is a Cohen-Macaulay orientable pseudomanifold over $\K$.
    \end{enumerate}
\end{theorem}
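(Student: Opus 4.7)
The equivalences split into an algebraic half (1)$\Leftrightarrow$(2), which I would handle via Reisner's criterion and Hochster's formula, and a topological/combinatorial half (2)$\Leftrightarrow$(3), which I would handle by examining links of faces of various codimensions.

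For (1)$\Leftrightarrow$(2), I would first invoke Reisner's criterion: $R/I_\Delta$ is Cohen-Macaulay if and only if $\tilde H_i(\lk_\Delta(\sigma);\K)=0$ for every $\sigma\in\Delta$ and every $i<\dim\lk_\Delta(\sigma)$. This accounts for the vanishing portion of the homology-sphere condition. To upgrade Cohen-Macaulay to Gorenstein, I would pass to an artinian reduction $A$ and compute its socle using Hochster-style formulas that identify the graded pieces of the top local cohomology $H^{d+1}_\m(R/I_\Delta)$ with the top reduced cohomologies $\tilde H^{\dim\lk_\Delta(\sigma)}(\lk_\Delta(\sigma);\K)$ summed over $\sigma\in\Delta$. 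Gorensteinness then amounts to the socle being one-dimensional, which, under the non-cone hypothesis, forces each such top link cohomology to be one-dimensional; combined with the vanishing already established, this is exactly condition (2).

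For (2)$\Rightarrow$(3), applying the homology-sphere condition at $\sigma=\emptyset$ gives $\tilde H_d(\Delta;\K)\cong\K$, hence orientability, while Reisner's criterion produces Cohen-Macaulayness and therefore purity. The pseudomanifold axioms follow by specializing the homology-sphere condition to $(d-1)$--dimensional faces (whose links must be $0$-dimensional homology spheres, i.e.\ pairs of points, forcing every ridge to lie in exactly two facets) and to $(d-2)$--dimensional faces (whose links are $1$-dimensional homology spheres, i.e.\ cycles, from which strong connectivity follows by chaining the resulting dual-graph walks). For (3)$\Rightarrow$(2), the link of any face of an orientable Cohen-Macaulay pseudomanifold is itself an orientable Cohen-Macaulay pseudomanifold of one lower dimension, so an induction on $\dim\sigma$ combined with a Poincar\'e--Lefschetz duality argument pins the top reduced homology of every link to a single copy of $\K$ and the lower ones to zero.

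\noindent\textbf{Main obstacle.} The subtlest ingredient is the multigraded bookkeeping in the Hochster-style socle computation: the non-cone assumption must eliminate precisely those potential socle elements supported in multi-degrees missing a vertex, so I would track the multigrading carefully to confirm that one-dimensionality of the socle and the link-by-link condition (2) coincide. The (3)$\Rightarrow$(2) direction requires a careful deployment of Poincar\'e duality in the Cohen-Macaulay pseudomanifold setting; organizing it by induction on $\dim\sigma$ and bootstrapping from the low-codimension cases produced while proving (2)$\Rightarrow$(3) is the cleanest path I see.
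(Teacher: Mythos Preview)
The paper does not prove this theorem; it is quoted as a background result with a citation to Stanley's book \cite[Theorem 5.1, Chapter 2]{S1996B}, and no argument is supplied. So there is no ``paper's own proof'' to compare against.

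That said, your plan is broadly the standard one and would work with care. A couple of points to watch. For (2)$\Rightarrow$(3), deducing strong connectivity from the links of $(d-2)$--faces being cycles only gives local information about the dual graph; global connectivity of the facet--ridge graph is more cleanly obtained directly from the Cohen--Macaulay property (a pure CM complex is strongly connected). For (3)$\Rightarrow$(2), the assertion that every link of an orientable CM pseudomanifold is again an orientable CM pseudomanifold is not automatic: CM-ness of links is fine, and the pseudomanifold axioms transfer, but orientability of the link (i.e.\ nonvanishing of its top homology) needs an argument, and invoking Poincar\'e--Lefschetz duality in this generality requires justification since the complex need not be a manifold. The cleanest route, and the one Stanley takes, is to stay on the algebraic side: compute the canonical module of $R/I_\Delta$ via Hochster's formula, identify Gorenstein with type one, and read off condition (2) directly; the equivalence with (3) then follows from the combinatorial characterization of when all top link homologies are one-dimensional.
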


\begin{example}[{\bf The pinched torus: A non-CM orientable pseudomanifold}]\label{e:pinched}
    Let 
    $$ 
        \Lambda  = \tuple{123, 134, 145, 125, 239, 289, 257, 278, 457, 674, 346, 369, 781, 671, 691, 891}
    $$
    be the simplicial complex obtained by identiying vertices $0$ and $1$ in the simplicial complex $\Gamma$ below.

    \begin{center}

\tikzset{every picture/.style={line width=0.75pt}} 

\tikzset{every picture/.style={line width=0.75pt}} 

\begin{tikzpicture}[x=0.75pt,y=0.75pt,yscale=-1,xscale=1]

\draw  [fill={rgb, 255:red, 255; green, 255; blue, 255 }  ,fill opacity=1 ] (100,139) -- (118,121) -- (160,121) -- (160,173) -- (142,191) -- (100,191) -- cycle ; \draw   (160,121) -- (142,139) -- (100,139) ; \draw   (142,139) -- (142,191) ;
\draw [fill={rgb, 255:red, 128; green, 128; blue, 128 }  ,fill opacity=1 ]   (142,191) -- (142,139) ;
\draw  [fill={rgb, 255:red, 155; green, 155; blue, 155 }  ,fill opacity=1 ] (100,191) -- (100,139) -- (142,139) -- cycle ;
\draw  [fill={rgb, 255:red, 155; green, 155; blue, 155 }  ,fill opacity=1 ] (100,191) -- (142,139) -- (142,191) -- cycle ;
\draw  [fill={rgb, 255:red, 155; green, 155; blue, 155 }  ,fill opacity=1 ] (142,191) -- (142,139) -- (160,121) -- cycle ;
\draw  [fill={rgb, 255:red, 155; green, 155; blue, 155 }  ,fill opacity=1 ] (160,173) -- (142,191) -- (160,121) -- cycle ;
\draw  [fill={rgb, 255:red, 155; green, 155; blue, 155 }  ,fill opacity=1 ] (100,139) -- (131.83,78) -- (118,121) -- cycle ;
\draw  [fill={rgb, 255:red, 155; green, 155; blue, 155 }  ,fill opacity=1 ] (142,139) -- (100,139) -- (131.83,78) -- cycle ;
\draw  [fill={rgb, 255:red, 155; green, 155; blue, 155 }  ,fill opacity=1 ] (142,139) -- (131.83,78) -- (160,121) -- cycle ;
\draw  [fill={rgb, 255:red, 155; green, 155; blue, 155 }  ,fill opacity=1 ] (142,191) -- (100,191) -- (135.83,248) -- cycle ;
\draw  [fill={rgb, 255:red, 155; green, 155; blue, 155 }  ,fill opacity=1 ] (142,191) -- (135.83,248) -- (160,173) -- cycle ;
\draw    (100,139) -- (118,121) ;
\draw    (160,121) -- (118,121) ;
\draw    (118,121) -- (131.83,78) ;
\draw    (100,191) -- (114.83,174) ;
\draw    (114.83,174) -- (118,121) ;
\draw    (114.83,174) -- (160,173) ;
\draw    (114.83,174) -- (100,139) ;
\draw    (114.83,174) -- (160,121) ;
\draw   (135.83,248) -- (114.83,174) -- (100,191) -- cycle ;

\draw (127,64) node [anchor=north west][inner sep=0.75pt]   [align=left] {$\displaystyle 1$};
\draw (162,111) node [anchor=north west][inner sep=0.75pt]   [align=left] {$\displaystyle 2$};
\draw (139,121) node [anchor=north west][inner sep=0.75pt]   [align=left] {$\displaystyle 3$};
\draw (88,131) node [anchor=north west][inner sep=0.75pt]   [align=left] {$\displaystyle 4$};
\draw (121,106) node [anchor=north west][inner sep=0.75pt]   [align=left] {$\displaystyle 5$};
\draw (131.38,249) node [anchor=north west][inner sep=0.75pt]  [xslant=-0.07] [align=left] {$\displaystyle 0$};
\draw (87,182) node [anchor=north west][inner sep=0.75pt]   [align=left] {$\displaystyle 6$};
\draw (101,162) node [anchor=north west][inner sep=0.75pt]   [align=left] {$\displaystyle 7$};
\draw (162,165) node [anchor=north west][inner sep=0.75pt]   [align=left] {$\displaystyle 8$};
\draw (142,188) node [anchor=north west][inner sep=0.75pt]   [align=left] {$\displaystyle 9$};

\end{tikzpicture}
    \end{center}
    The complex $\Lambda$ is a triangulation of the {\bf pinched torus}, it is an example of a non-Cohen-Macaulay orientable pseudomanifold. The complex $\Gamma$ above is an example of a homology sphere. The complex $\Gamma$ is in fact a simplicial sphere.
\end{example}

\section{Inverse systems and (coinvariant) stresses}

In 1971, McMullen~\cite{M1971} asked whether the set of $h$-vectors of a class of complexes called simplicial polytopes is equal to set of vectors $h = (h_0, \dots, h_{d})$ satisfying:
\begin{enumerate}
    \item $h_0 = 1$;
    \item $h_i = h_{d - i}$; and
    \item there exists a monomial ideal $I \subset R$ such that $\HF_{\frac{R}{I}}(i) = h_i - h_{i - 1}$ for $i = 1, \dots, \lceil \frac{d}{2} \rceil$ and $\HF_{\frac{R}{I}}(i) = 0$ for $i > \lceil \frac{d}{2} \rceil$.
\end{enumerate}
This question is now called the $g$-conjecture. Billera and Lee~\cite{BL1981} showed that every sequence satisfying the properties above is the $h$-vector of some simplicial polytope, while Stanley~\cite{S80.1} proved that the $h$-vector of every simplicial polytope satisfies the properties above. Stanley's proof was then generalized by Adiprasito~\cite{A2018} (see also~\cites{PP2020,APP2021}) to the class of simplicial spheres.

A weaker question then the one asked by McMullen, is whether the entries of the $g$-vector of a simplicial polytope are nonnegative. In 1987, Kalai~\cite{K1987} gave a new proof that $g_2(P)$ is nonnegative for any simplicial polytope $P$. Kalai's proof used techniques from \textit{rigidity theory}, which led to the natural question of whether it is possible to generalize the classical notions of stresses and infinitesimal motions from rigidity theory to higher dimensions, in order to strengthen the connections between $g$-vectors and this theory.

In 1996, Lee~\cite{L1996} answered this question in the positive, defining the notions of affine and linear stresses, and connecting rigidity theory to Stanley's proof of the $g$-conjecture mentioned above. The key algebraic notion that ties these ideas together was developed by Macaulay, which we now define.

\begin{definition}[{\bf Inverse systems}] 
    Let $\K$ be a field, $R = \K[x_1,\dots, x_n]$, $S = \K[y_1, \dots, y_n]$, where $\deg x_i = 1$ and $\deg y_i = -1$. Let $x_a = x_1^{a_1} \dots x_n^{a_n} \in R$ and $y_b = y_1^{b_1} \dots y_n^{b_n} \in S$. Then we can view $S$ as an $R$-module via two different actions, which we define on monomials, but are easily extended (linearly) to arbitrary elements.
    \begin{enumerate}
        \item {\bf (Differentiation)} $x_a \bullet y_b = \frac{\partial y_b}{\partial y_1^{a_1}\dots \partial y_n^{a_n}}$, and 
        \item {\bf (Contraction)} $x_a \circ y_b = \begin{cases}
            y_1^{b_1 - a_1} \dots y_n^{b_n - a_n} \quad \mbox{ if $b_i \geq a_i$ for every $i$.} \\
            0 \quad \mbox{otherwise.}
        \end{cases}$
    \end{enumerate}
    Let $I \subset R$ be an ideal. The {\bf inverse system} of $I$ is the $R$-module (depending on the $R$-action) 
    \begin{enumerate}
        \item $I^\perp = \{F \in S \st g \bullet F = 0 \qforevery g \in I\} \subset S$.
        \item $I^{-1} = \{F \in S \st g \circ F = 0 \qforevery g \in I\} \subset S$.
    \end{enumerate}
\end{definition}

Throughout this paper, we will only consider $S$ an $R$-module with the action given by contraction. Since Lee's work uses differentiation, we include the definition here for completeness.

Given a homogeneous ideal $I \subset R$ such that $\dim \frac{R}{I} = d$, a {\bf system of parameters} (abbreviated sop) is a sequence of homogeneous elements $f_1, \dots, f_d$ where $\frac{R}{I + (f_1, \dots, f_d)}$ is a finite dimensional $\K$-vector space, or in other words, $I + (f_1, \dots, f_d)$ is an artinian ideal. If moreover $f_i$ is a linear form for every $i$, we say that $f_1, \dots, f_d$ is a {\bf linear system system of parameters} (abbreviated lsop).

The following result due to Macaulay~\cite{macaulaybookinversesystems} (see~\cite[Chapter 2]{lefschetzbook} for a modern treatment of this result) is the key algebraic result we need.
\begin{theorem}[{\bf Macaulay inverse system duality,~\cite{macaulaybookinversesystems}}]\label{t:macaulayduality}
    Let $I$ be an artinian ideal. Then $I^{-1}$ is a finite cyclic module if and only if $R/I$ is an Artinian Gorenstein ring. In this case, the top nonzero degree of $A = R/I$ is equal to $\deg F$, where $F$ is a generator of $I^{-1}$.

    Moreover for any homogeneous artinian ideal $I$, $\dim (R/I)_i = \dim I^{-1}_{-i}$.
\end{theorem}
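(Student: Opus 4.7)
The plan is to deduce the theorem from the classical apolarity pairing together with graded Nakayama's lemma. First I set up a nondegenerate $\K$-bilinear pairing $\langle -, - \rangle \st R_i \times S_{-i} \to \K$ by declaring $\langle g, F \rangle$ to be the constant term of $g \circ F$. On monomials this gives $\langle x_a, y_b \rangle = \delta_{a,b}$, so the monomial bases of $R_i$ and $S_{-i}$ are dual and the pairing is perfect. A direct computation on monomials then establishes the adjunction $\langle h, g \circ F \rangle = \langle hg, F \rangle$ for $g \in R_j$, $h \in R_k$, $F \in S_{-(j+k)}$; this single identity is the hinge of the whole argument.

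Next I use the adjunction to prove the dimension formula. For $F \in S_{-i}$, the condition $F \in (I^{-1})_{-i}$ requires $g \circ F = 0$ for every homogeneous $g \in I$. Degrees above $i$ force $g \circ F = 0$ for free; in degree $i$ the condition is exactly $\langle g, F\rangle = 0$; in degree $j < i$ the adjunction rewrites it as $\langle hg, F\rangle = 0$ for all $h \in R_{i-j}$. Since $I$ is homogeneous, $I_i = \sum_{j \leq i} R_{i-j} I_j$, so the whole family of conditions collapses to $\langle u, F \rangle = 0$ for all $u \in I_i$. Hence $(I^{-1})_{-i}$ is the orthogonal complement of $I_i$ in a perfect pairing, yielding $\dim (I^{-1})_{-i} = \dim R_i - \dim I_i = \dim (R/I)_i$, which is the ``moreover'' part.

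For the main equivalence I identify $I^{-1}/\m I^{-1}$ with the $\K$-dual of $\Soc(R/I)$. The same adjunction identifies the contraction $x_j \circ \st (I^{-1})_{-i-1} \to (I^{-1})_{-i}$ with the transpose of the multiplication map $\cdot\, x_j \st (R/I)_i \to (R/I)_{i+1}$. Consequently $(\m I^{-1})_{-i} = \sum_j x_j \circ (I^{-1})_{-i-1}$ is the annihilator in $(I^{-1})_{-i}$ of $\Ker\bigl(\bigoplus_j \cdot\, x_j\bigr) = \Soc(R/I)_i$, and so $(I^{-1}/\m I^{-1})_{-i} \cong \Soc(R/I)_i^\ast$. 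Because $I$ is artinian, the dimension formula forces $I^{-1}$ to be finite dimensional over $\K$, hence finitely generated as an $R$-module, and graded Nakayama then gives that its minimal number of generators equals $\dim_\K I^{-1}/\m I^{-1} = \dim_\K \Soc(R/I)$. Therefore $I^{-1}$ is cyclic if and only if $\Soc(R/I)$ is one-dimensional, i.e., if and only if $R/I$ is Artinian Gorenstein. When this holds, any generator $F$ must live in the single graded piece supporting $I^{-1}/\m I^{-1}$, and the dimension formula matches that piece with the top nonzero degree of $R/I$, giving the statement about $\deg F$.

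The main obstacle is purely bookkeeping: keeping the two competing $R$-actions (multiplication on $R$ versus contraction on $S$) and the opposite-sign grading convention aligned long enough to verify the adjunction $\langle h, g \circ F\rangle = \langle hg, F\rangle$ on monomials. Once that identity is in hand, everything else is a formal exercise in perfect pairings and graded Nakayama.
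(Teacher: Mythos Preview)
The paper does not actually prove \cref{t:macaulayduality}; it is stated as a classical result due to Macaulay, with a pointer to \cite[Chapter 2]{lefschetzbook} for a modern treatment, and is then used as a black box throughout. So there is no ``paper's own proof'' to compare against.

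That said, your argument is correct and is essentially the standard proof one finds in the references the paper cites. The key ingredients---the perfect apolarity pairing $\langle x_a, y_b\rangle = \delta_{a,b}$, the adjunction $\langle h, g\circ F\rangle = \langle hg, F\rangle$ (which is just associativity $(hg)\circ F = h\circ(g\circ F)$ of contraction), the identification $(I^{-1})_{-i} = I_i^\perp$, and the duality between $I^{-1}/\m I^{-1}$ and $\Soc(R/I)$ followed by graded Nakayama---are exactly the steps in the usual treatment. One small point worth tightening: when you write $I_i = \sum_{j\le i} R_{i-j} I_j$, this holds trivially since the $j=i$ summand already equals $I_i$; the content is really that every condition $g\circ F = 0$ for $g\in I_j$ with $j<i$ is detected by pairing against $R_{i-j}\cdot g \subseteq I_i$, which you do argue correctly via the adjunction and nondegeneracy. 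Otherwise the proof is complete and self-contained.
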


\begin{remark}
    Since in the context of inverse systems the vector spaces $R$ and $S$ are isomorphic, we will often think of elements in $S$, as elements in $R$.
\end{remark}
 
Given a $d$-dimensional Cohen-Macaulay simplicial complex $\Delta$ such that $V(\Delta) = [n]$, and a function $p: [n] \to \R^{d + 1}$, Lee's idea to bridge Kalai's~\cite{K1987} and Stanley's~\cite{S80.1} approaches to the study of $g$-vectors comes from translating the data from $p$, to a lsop of the Stanley-Reisner ideal $I_\Delta$. More specifically, assuming the function $p$ is generic (i.e. the multiset of coordinates of $p(i)$, for $i \in V(\Delta)$ is algebraically independent over $\Q$), the set of linear forms 
\begin{equation}\label{eq:plinear}
    \theta_j = \sum_i p(i)_j x_i \in R \qfor j = 1, \dots, d + 1
\end{equation}
is a lsop of $I_\Delta$, where $p(i)_j$ denotes the $j$-th coordinate of $p(i)$. We are now ready to state the definition of affine and linear stresses.
\begin{definition}[{\bf Affine and linear stresses},~\cite{L1996}]
    Following notation as above, the space of {\bf $k$-linear stresses} of the pair $(\Delta, p)$ is the vector space
    $$
        \Ss^l_k(\Delta, p) = (I_\Delta,\theta_1,\dots,\theta_{d+1})^{\perp}_{-k}.
    $$
    Moreover, the space of {\bf $k$-affine stresses} of the pair $(\Delta, p)$ is the vector space 
    $$
        \Ss^a_k(\Delta, p) = (I_\Delta, \theta_1, \dots, \theta_{d+1}, L)^{\perp}_{-k},
    $$
    where $L = x_1 + \dots + x_n \in R$.
\end{definition}

Lee showed that the coefficients of polynomials in $\Ss^a_2(\Delta, p)$ are directly related to the standard notion of stresses from bar-and-joint structures from rigidity theory. To see why inverse systems are intrinsically related to the study of $h$-vectors of (Cohen-Macaulay) simplicial complexes, we need the following lemma.

\begin{lemma}\label{l:basicHS}
    Let $\Delta$ be a $d$-dimensional Cohen-Macaulay complex, and $f_1, \dots, f_{d+1}$ a sop of $I_\Delta$. Then 
    $$
        \HS_{\frac{R}{(I_\Delta, f_1, \dots, f_{d+1})}}(t) = h_\Delta(t)\prod_{i = 1}^{d + 1}(1 + t + \dots + t^{\deg f_i - 1}).
    $$
\end{lemma}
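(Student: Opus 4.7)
The plan is to combine two standard facts about Cohen-Macaulay graded algebras. First, since $\Delta$ is a $d$-dimensional Cohen-Macaulay complex, the Stanley-Reisner ring $R/I_\Delta$ is Cohen-Macaulay of Krull dimension $d+1$, and by the very definition of the $h$-vector as the numerator of the Hilbert series one has
$$
    \HS_{R/I_\Delta}(t) = \frac{h_\Delta(t)}{(1-t)^{d+1}}.
$$
Second, a homogeneous system of parameters on a graded Cohen-Macaulay algebra is automatically a regular sequence, so $f_1,\dots,f_{d+1}$ is $R/I_\Delta$-regular.

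The next step is to translate regularity into a Hilbert series computation. For any graded module $M$ and any homogeneous non-zero-divisor $g$ on $M$ of degree $e$, the short exact sequence
$$
    0 \longrightarrow M(-e) \xrightarrow{\;\cdot g\;} M \longrightarrow M/gM \longrightarrow 0
$$
yields $\HS_{M/gM}(t) = (1 - t^e)\,\HS_M(t)$. Iterating this for $M = R/I_\Delta$ and the regular sequence $f_1,\dots,f_{d+1}$, I obtain
$$
    \HS_{R/(I_\Delta,\, f_1, \dots, f_{d+1})}(t) \;=\; \HS_{R/I_\Delta}(t) \prod_{i=1}^{d+1} \bigl(1 - t^{\deg f_i}\bigr).
$$

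Finally, using $\frac{1 - t^{\deg f_i}}{1-t} = 1 + t + \cdots + t^{\deg f_i - 1}$ for each $i$, the $(1-t)^{d+1}$ in the denominator of $\HS_{R/I_\Delta}(t)$ is cancelled by the $d+1$ numerator factors, producing exactly
$$
    h_\Delta(t) \prod_{i=1}^{d+1} \bigl(1 + t + \cdots + t^{\deg f_i - 1}\bigr),
$$
as required.

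There is no real obstacle here: the only point that must be checked carefully is the passage from ``system of parameters'' to ``regular sequence,'' but this is immediate from the hypothesis that $\Delta$ is Cohen-Macaulay together with the standard characterization of Cohen-Macaulayness in terms of depth equalling Krull dimension.
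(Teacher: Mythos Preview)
Your proof is correct and follows essentially the same approach as the paper's own proof: both use that a homogeneous sop on a Cohen--Macaulay ring is a regular sequence, apply the short exact sequence for multiplication by each $f_i$ to pick up a factor of $(1-t^{\deg f_i})$, and then simplify against the $(1-t)^{d+1}$ in the denominator of $\HS_{R/I_\Delta}(t)$. The only cosmetic difference is that the paper cites \cite[Theorem~2.1.2]{CMrings} for the sop-to-regular-sequence step and states the Hilbert series formula for $R/I_\Delta$ implicitly at the end, whereas you state it explicitly at the start.
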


\begin{proof}
    By~\cite[Theorem 2.1.2]{CMrings}, since $\Delta$ is Cohen-Macaulay and $f_1, \dots, f_{d+1}$ is a sop of $I_\Delta$, this sequence of elements is also a regular sequence of $I_\Delta$. Hence for every $i$ we have the following short exact sequence.
    $$
        0 \to \frac{R}{I_\Delta + (f_1, \dots, f_i)}(-\deg f_{i + 1}) \xrightarrow{\times f_{i+1}} \frac{R}{I_\Delta + (f_1, \dots, f_i)} \rightarrow \frac{R}{I_\Delta + (f_1, \dots, f_{i+1})} \to 0
    $$
    We then conclude 
    $$
        \HS_{\frac{R}{(I_\Delta, f_1,\dots, f_{i+1})}}(t) = (1 - t^{\deg f_{i + 1}})\HS_{\frac{R}{(I_\Delta,f_1,\dots,f_i)}}(t)
    $$
    and finally by induction:
    $$
    \HS_{\frac{R}{(I_\Delta, f_1, \dots, f_{d + 1})}} = \HS_{\frac{R}{I_\Delta}}(t) \prod_{i = 1}^{d + 1}(1 - t^{\deg f_i}) = h_\Delta(t) \prod_{i = 1}^{d + 1}\frac{(1 - t^{\deg f_i})}{(1 - t)} = h_\Delta(t) \prod_i^{d + 1}(1 + t + \dots + t^{\deg f_i - 1}).
    $$
\end{proof}

In the special case where $f_1, \dots, f_{d+1}$ is a lsop,~\cref{l:basicHS,t:macaulayduality} imply 
    \begin{equation}\label{eq:linearstress}
        \dim \Ss^l_k(\Delta, p) = h_i.        
    \end{equation}
Equation \eqref{eq:linearstress} shows that the use of lsops is a useful tool for the study of $h$-vectors. A drawback of using linear forms, is that the set of linear forms changes depending on the complex $\Delta$.

Our alternative approach to avoid this drawback, is to use a \emph{non linear} sop that works for any simplicial complex. Note that the trade-off is that information on the Hilbert series will not translate directly to the study of $h$-vectors.

The sop that we use comes from algebraic combinatorics. Given a fixed integer $n$, we denote by $e_i$ the {\bf $i$-th elementary symmetric polynomial}
$$
    e_i = \sum_{j_1 < \dots < j_i} x_{j_1} \dots x_{j_i} \qfor i \leq n.
$$
It is a well known fact that the ideal $I = (e_1, \dots, e_n) \subset \K[x_1,\dots, x_n]$ is a complete intersection, and the quotient $\frac{R}{(e_1, \dots, e_n)}$ is called the {\bf coinvariant ring} of the symmetric group $S^n$ on $n$ elements. It is also known (see~\cites{HM2021,DCEP1982,GS1984,S1990,AR2023}) that the set of polynomials $e_1, \dots, e_{d+1}$ is a sop for any $d$-dimensional simplicial complex. This sop is called the {\bf universal system of parameters}. Here we give a proof of this fact in a slightly more general setting. This simple observation suggests how to find families of polynomials with this property.

\begin{lemma}\label{l:ci}
    Let $I = (f_1, \dots, f_n) \subset R = \K[x_1, \dots, x_n]$ be a complete intersection such that for every $d$-dimensional complex $\Delta$, $f_i \in I_\Delta$ for every $i > d + 1$. Then $f_1, \dots, f_{d+1}$ is a sop for every $I_\Delta$ where $\Delta$ is a $d$-dimensional simplicial complex.
\end{lemma}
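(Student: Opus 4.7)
The plan is to reduce the statement to the observation that once we impose $I_\Delta$, the truncated sequence $f_1,\dots,f_{d+1}$ automatically recovers the full complete intersection $I$ modulo $I_\Delta$. First I would invoke the assumption that $I = (f_1,\dots,f_n)$ is a complete intersection in the polynomial ring $R$ of Krull dimension $n$: this forces $f_1,\dots,f_n$ to be a regular sequence of maximal length, so that $\dim R/I = 0$ and $R/I$ is a finite-dimensional $\K$-vector space.

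Next I would use the divisibility hypothesis. Because $f_i \in I_\Delta$ for every $i > d+1$, we have the containment of ideals
$$
    (f_1,\dots,f_n) \;\subseteq\; I_\Delta + (f_1,\dots,f_{d+1}),
$$
which yields the equality $I_\Delta + (f_1,\dots,f_{d+1}) = I_\Delta + I$. Hence the quotient
$$
    \frac{R}{I_\Delta + (f_1,\dots,f_{d+1})}
$$
is a quotient of the artinian ring $R/I$, and so is itself a finite-dimensional $\K$-vector space.

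Finally, since $\Delta$ is $d$-dimensional one has $\dim R/I_\Delta = d+1$, so by definition a system of parameters of $I_\Delta$ is a sequence of $d+1$ homogeneous elements making the quotient artinian. The previous step verifies this for $f_1,\dots,f_{d+1}$, which completes the proof. There is essentially no technical obstacle in this argument: the only step with any content is spotting that the missing generators $f_{d+2},\dots,f_n$ are already absorbed into $I_\Delta$, after which the conclusion is immediate from the complete intersection hypothesis.
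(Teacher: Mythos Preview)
Your proposal is correct and follows essentially the same approach as the paper: observe that $R/I$ is artinian since $I$ is a complete intersection on $n$ generators, note that $I \subseteq I_\Delta + (f_1,\dots,f_{d+1})$ by the hypothesis on $f_{d+2},\dots,f_n$, and conclude that the quotient is finite-dimensional. The paper's proof is terser and omits the explicit remark that $\dim R/I_\Delta = d+1$, but the argument is the same.
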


\begin{proof}
    Since $I$ is a complete intersection on $n = \dim R$ generators, we know $\frac{R}{I}$ is a finite dimensional vector space. By assumption we know $I \subseteq I_\Delta + (f_1, \dots, f_{d+1})$, hence $\frac{R}{I_\Delta + (f_1, \dots, f_{d+1})}$ is also a finite dimensional vector space for every $d$-dimensional complex $\Delta$.
\end{proof}


We may now define the notion of a coinvariant stress of a simplicial complex.

\begin{definition}[{\bf Coinvariant and $\ell$-coinvariant stresses}]\label{d:coinvariant}
    Let $\Delta$ be a $d$-dimensional simplicial complex. The space of {\bf $k$-coinvariant stresses} of $\Delta$ is the vector space 
    $$
        \dim \Sco_k(\Delta) = (I_\Delta, e_1, \dots, e_{d+1})^{-1}_{-k}.
    $$
    Moreover, given a linear form $\ell \in R$, the space of {\bf $k$-$\ell$-coinvariant stresses} is the vector space 
    $$
        \dim \Sco_k(\Delta, \ell) = (I_\Delta, e_1, \dots, e_{d + 1}, \ell)^{-1}_{-k}.
    $$
    Lastly, we set
    $$
        \Kco(\Delta) = \frac{R}{I_\Delta + (e_1, \dots, e_{d + 1})}.
    $$
\end{definition}

Note that for any $d$-dimensional Cohen-Macaulay complex $\Delta$,~\cref{l:basicHS} directly implies the following.

\begin{corollary}
    Let $\Delta$ be a $d$-dimensional Cohen-Macaulay complex. Then 
    $$
        \HS_{\Kco(\Delta)}(q) = h_\Delta(q)[d]_q!
    $$
    where $[d]_q! = \prod_{i = 1}^{d}(1 + q + \dots + q^{i})$ is the $q$-analogue of $d!$.

    In particular, if $h_{d+1} > 0$, the top nonzero degree of $\Kco(\Delta)$ is $\binom{d + 2}{2}$.
\end{corollary}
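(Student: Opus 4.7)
The plan is a direct application of \cref{l:basicHS}. By~\cite{HM2021} and the discussion preceding \cref{l:ci}, the sequence $e_1,\dots,e_{d+1}$ is a system of parameters for $I_\Delta$, and since $\Delta$ is Cohen-Macaulay this is all that \cref{l:basicHS} requires. Taking $f_i = e_i$, which has degree $\deg e_i = i$, the lemma gives
$$
\HS_{\Kco(\Delta)}(q) \;=\; h_\Delta(q)\prod_{i=1}^{d+1}\bigl(1 + q + \dots + q^{\,i-1}\bigr).
$$

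The first step is to rewrite the product. The $i=1$ factor is just $1$, so we may reindex by $j = i-1$ and drop the trivial factor to obtain
$$
\prod_{i=1}^{d+1}\bigl(1 + q + \dots + q^{\,i-1}\bigr) \;=\; \prod_{j=1}^{d}\bigl(1 + q + \dots + q^{\,j}\bigr) \;=\; [d]_q!,
$$
which establishes the Hilbert series formula.

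For the last sentence I would compute the top degree of the right-hand side. The top degree of $h_\Delta(q)$ is $d+1$ precisely when $h_{d+1} > 0$, and the top degree of the $j$-th factor of $[d]_q!$ is $j$, so
$$
\deg [d]_q! \;=\; 1 + 2 + \dots + d \;=\; \binom{d+1}{2}.
$$
Hence when $h_{d+1} > 0$ the top nonzero degree of $\Kco(\Delta)$ is
$$
(d+1) + \binom{d+1}{2} \;=\; \binom{d+2}{2},
$$
as claimed. There is no real obstacle here: the only nontrivial input is that $e_1,\dots,e_{d+1}$ is a system of parameters (already recorded in the paper), together with the bookkeeping to identify the product as $[d]_q!$ under the paper's convention.
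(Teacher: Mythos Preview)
Your proof is correct and follows exactly the paper's approach: apply \cref{l:basicHS} with the universal sop $e_1,\dots,e_{d+1}$, then compute the degree. The paper's proof is terser—it treats the Hilbert series identity as immediate from \cref{l:basicHS} and only records the degree computation $(d+1)+\binom{d+1}{2}=\binom{d+2}{2}$—so your version simply spells out the reindexing that identifies the product with $[d]_q!$.
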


\begin{proof}
    We only need to prove the last statement, which follows directly since the Hilbert series of $\Kco(\Delta)$ is a polynomial of degree $d + 1 + \binom{d + 1}{2} = \binom{d + 2}{2}$.
\end{proof}

\section{The (coinvariant) Macaulay dual generator of homology spheres}

Given an artinian algebra $A = \frac{R}{I}$, the top nonzero degree of $A$ is called the {\bf socle degree} of $A$. Moreover, the vector space 
$$
    \Soc(A) = \{f \in A \st x_i f = 0 \qforevery i\}
$$
is called the {\bf socle} of $A$. If $\Soc(A) = A_{d}$, where $d$ is the socle degree of $A$, we say $A$ is a {\bf level algebra}. \cref{t:macaulayduality} says that when $A = \frac{R}{I}$ is an Artinian Gorenstein (AG) algebra of socle degree $d + 1$, and in particular $\Soc(A) = A_{d + 1}$, the inverse system of $I$ is a cyclic module generated by a polynomial $F \in S$ of degree $- (d + 1)$. The polynomial $F$ is called the {\bf Macaulay dual generator} of $A$.
 
\cref{t:gorensteinequivalences} implies that for a field $\K$, and a $\K$-homology sphere $\Delta$, the ring $\Kco$ is an AG algebra, and hence has a Macaulay dual generator. In the general case, where $\Delta$ is a $d$-dimensional complex and $\Kco(\Delta)$ is simply an artinian algebra, we call the generators of $(I_\Delta, e_1, \dots, e_{d+1})^{-1}$ the {\bf top coinvariant stresses} of $\Delta$.
Note that when $\Delta$ is a homology sphere with unique top coinvariant stress $F_\Delta$, we have $F_\Delta \in S$, but since $S \cong R$, we may view $F_\Delta$ as a polynomial in $R$ via the map $y_i \mapsto x_i$. In this setting, we conclude $\Soc(\Kco) \cong \{c F_\Delta \st c \in \K\}$.  

Our goal in this section is to give an explicit formula for $F_\Delta$, given an arbitrary $\K$-homology sphere $\Delta$. We start with a classical fact from algebraic combinatorics, which in our setting translates to computing $F_\Delta$, where $\Delta$ is the boundary of the simplex.

\begin{example}[{\bf Vandermonde determinants and the boundary of a simplex}]
    Let $\Delta$ be the boundary of the simplex on $n$ vertices and $I_\Delta = (x_1 \dots x_n) \subset R$ its Stanley-Reisner ideal. Then $\Kco(\Delta)$ in this case is exactly the coinvariant ring of the symmetric group on $n$ elements, defined by the ideal generated by elementary symmetric polynomials $(e_1, \dots, e_n)$. As a consequence, $\Kco$ is a complete intersection of socle degree $\binom{n}{2}$. It is known in this case that the top coinvariant stress of $\Kco$ (which is also the Macaulay dual generator of the coinvariant ring of $S^n$) is the polynomial:
    $$
        V([n]) = \prod_{1 \leq i < j \leq n} (x_i - x_j).
    $$
\end{example}

Let $R = \K[x_1, \dots, x_n]$ and $B = \{i_1, \dots, i_s\} \subset [n]$. The {\bf Vandermonde determinant} on $B$ is the polynomial 
$$
    V(B) = \prod_{1 \leq i_j < i_k \leq n}(x_{i_j} - x_{i_k}).
$$
By~\cref{t:gorensteinequivalences,t:macaulayduality}, if $\Delta$ is not a cone, $\Delta$ has a unique top coinvariant stress if and only if $\Delta$ is a Cohen-Macaulay orientable pseudomanifold. The key idea of the main result of this section, is that the top coinvariant stress $F_\Delta$ should contain information on an orientation of $\Delta$.

\begin{theorem}[{\bf Top homology and top coinvariant stresses}]\label{l:symmetriccirc}
    Let $\Delta = \tuple {F_1, \dots, F_s, G_1, \dots, G_m}$ be a $d$-dimensional simplicial complex where $F_1, \dots, F_s$ are the $d$-dimensional facets of $\Delta$, assume $\tilde H_d(\Delta; \K) \neq 0$ and let $F = c_1 F_1 + \dots + c_s F_s$ be a nonzero element in $H_d(\Delta; \K)$. Then $F_\Delta \in \Sco_{\binom{d + 2}{2}}(\Delta)$, where 
    \begin{align*}
        F_\Delta & = c_1x_{F_1}V(F_1) + \dots + c_s x_{F_s}V(F_s) \\
                 & = c_1 \sum_{\sigma \in S^{F_1}} \sign(\sigma) x_{\sigma(F_1)}^{\mu} + \dots + c_s \sum_{\sigma \in S^{F_s}}\sign(\sigma) x_{\sigma(F_s)}^\mu,
    \end{align*}
    $\mu = (d + 1, \dots, 1)$ and $S^{F_i}$ is the symmetric group on the elements of $F_i$.
    
    In particular, $\dim \Sco_{\binom{d + 2}{2}}(\Delta) > 0$.
\end{theorem}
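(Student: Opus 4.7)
The plan is to show that $F_\Delta$ lies in the inverse system $(I_\Delta + (e_1, \dots, e_{d+1}))^{-1}$ under contraction by verifying $g \circ F_\Delta = 0$ on the two families of generators: the monomials $x_\tau$ for nonfaces $\tau \notin \Delta$ and the elementary symmetric polynomials $e_1, \dots, e_{d+1}$. For nonfaces the argument is immediate, since every monomial appearing in $x_{F_j} V(F_j)$ has support exactly $F_j \in \Delta$: if $\tau$ is a nonface then $\tau \not\subseteq F_j$ for every $j$, so $x_\tau \circ (x_{F_j} V(F_j)) = 0$ and hence $x_\tau \circ F_\Delta = 0$.

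The heart of the argument is the elementary symmetric case, which I would handle via two algebraic identities. The first is the determinantal identity
\[
    e_i \circ (x_F V(F)) = e_{d+1-i}^F \cdot V(F)
\]
for any $(d+1)$-subset $F = \{a_0 < \dots < a_d\} \subseteq [n]$ and $1 \le i \le d+1$, where $e_k^F$ denotes the $k$-th elementary symmetric polynomial in the variables indexed by $F$. Only the monomials $x_T$ with $T \subseteq F$ contribute to the contraction; writing $x_F V(F) = \det(x_{a_k}^{l+1})_{k,l}$ (up to a fixed sign), contracting by such an $x_T$ decrements the rows indexed by $T$ by one, after which a factor of $x_{a_k}$ can be pulled out of each unaffected row, reducing the result to $x_{F \setminus T} V(F)$. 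Summing over $|T| = i$ then gives $e_{d+1-i}^F V(F)$.

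The second, more delicate identity is
\[
    e_k^F V(F) = (-1)^d \sum_{l=0}^d (-1)^l\, x_{F \setminus \{a_l\}} V(F \setminus \{a_l\})\, e_k^{F \setminus \{a_l\}},
\]
valid for every $0 \le k \le d$. My approach is to assemble the generating function $V(F)\prod_{i \in F}(1 + t x_i) = \sum_k t^k e_k^F V(F)$ and to evaluate the analogous sum $\sum_l (-1)^l x_{F \setminus \{a_l\}} V(F \setminus \{a_l\}) \prod_{i \in F \setminus \{a_l\}}(1 + t x_i)$ in closed form using the partial-fraction identity
\[
    \sum_{l=0}^d \frac{1}{(y - x_{a_l}) \prod_{i \neq l}(x_{a_i} - x_{a_l})} = \frac{(-1)^d}{\prod_{i=0}^d (y - x_{a_i})}
\]
specialized at $y = 0$ and $y = -1/t$, then matching coefficients of $t^k$ for $k \le d$.

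Combining the two identities, $e_i \circ F_\Delta = \sum_j c_j e_{d+1-i}^{F_j} V(F_j)$ unfolds into a double sum over facets $F_j$ and their ridges $\tau = F_j \setminus \{v\}$; regrouping by $\tau$, the coefficient in front of $x_\tau V(\tau) e_{d+1-i}^\tau$ is precisely the coefficient of $\tau$ in the boundary $\partial\bigl(\sum_j c_j F_j\bigr) = 0$, which vanishes by the cycle hypothesis. Hence $e_i \circ F_\Delta = 0$ for every $i \in \{1, \dots, d+1\}$. Nonvanishing of $F_\Delta$ is immediate: distinct facets produce polynomials supported on disjoint monomial sets (monomials in $x_{F_j} V(F_j)$ have support exactly $F_j$), so the summands are linearly independent and some $c_j \neq 0$. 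The main obstacle is the second identity; a direct combinatorial expansion is sign-heavy and opaque, but the partial-fraction route keeps the bookkeeping uniform across $k$.
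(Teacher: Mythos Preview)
Your argument is correct and follows a genuinely different route from the paper's. The paper verifies $e_k\circ F_\Delta=0$ by a direct monomial-by-monomial analysis: it fixes a target monomial $m$ of degree $\binom{d+2}{2}-k$ and splits into three cases according to whether the support of $m$ has fewer than $d$, exactly $d$, or exactly $d+1$ elements. The first case is vacuous, the $(d+1)$-support case is handled by a sign-pairing argument internal to a single facet (repeated exponents come in transposition pairs), and the $d$-support case is where the cycle condition $\partial F=0$ enters, ridge by ridge. Your approach instead proves two clean polynomial identities, $e_i\circ\bigl(x_FV(F)\bigr)=e_{d+1-i}^{F}\,V(F)$ and the expansion of $e_k^{F}V(F)$ as an alternating sum over the ridges of $F$, and then reads off $e_i\circ F_\Delta=0$ in one stroke by regrouping over ridges and invoking $\partial F=0$. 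The payoff of your method is uniformity: the ``within a facet'' cancellation (the paper's case~2) and the ``across facets'' cancellation (case~3) are merged into a single algebraic step, and the partial-fraction derivation avoids the sign bookkeeping that makes the paper's case~2 somewhat opaque. The paper's proof, on the other hand, is more elementary in that it needs no auxiliary identities and makes the role of the simplicial boundary map visible at the level of individual monomials.
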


\begin{proof}
    In order to prove $F_\Delta \in \Sco_{\binom{d + 2}{2}}(\Delta)$, we need to show $f \circ F_\Delta = 0$ for every generator $f$ of $(I_\Delta, e_1, \dots, e_{d + 1})$. Note that the support of every monomial in $F_\Delta$ is a face of $\Delta$, hence $m \circ F_\Delta = 0$ for every monomial generator $m$ of $(I_\Delta, e_1, \dots, e_{d+1})$. In particular, for every monomial $x_g$ of $e_k$, where $g$ is not a face of $\Delta$, we have $x_g \circ F_\Delta = 0$, so we may disconsider these monomials.

    Proving $e_k \circ F_\Delta = 0$ is equivalent to proving the coefficient of $m$ in $e_k \circ F_\Delta$ is $0$ for every monomial $m$ of degree $\binom{d + 2}{2} - k$. We have three cases:
    \begin{enumerate}
        \item If the support of $m$ has less than $d$ elements, the coefficient must be zero, since every monomial in $F_\Delta$ has only one variable to the power $1$, and $e_k$ consists only of squarefree monomials.
        \item If the support of $m$ has $d + 1$ elements, we may assume without loss of generality that the support of $m$ is $F_1$. The only monomials in $F_\Delta$ that contribute to the coefficient of $m$ in $e_k \circ F_\Delta$ are the monomials in $c_1 x_{F_1} V(F_1)$. Moreover, since every monomial in the support of $e_k$ is squarefree, the number of variables in the support of $m$ with the same power is $2a$ for some $a \geq 1$. Since there is an even number of variables with the same power in the support of $m$, there is also an even number of ways to obtain the monomial $m$ from the operation $x_\tau \circ F_\Delta$, where $\tau$ is a $(k-1)$--face of $\Delta$. The coefficient of $m$ in $e_k \circ F_\Delta$ is
        $$
            c_1\sum_{j = 1}^a (\sign(\sigma_j) - \sign(\sigma_j))= 0,
        $$
        where the signs of permutations come in pairs since if $x_i$ and $x_j$ have the same power in $m$, then if $\sigma$ is a permutation such that $x_g \circ x_{\sigma(F_1)}^\mu = m$ for some monomial $x_g$ of $e_k$, there exists another monomial $x_{g'}$ of $e_k$ such that $x_{g'} \circ x_{(ij) \circ \sigma(F_1)}^\mu = m$, and $\sigma$ and $(ij) \circ \sigma$ have opposite signs.
        \item If the support of $m$ has $d$ elements, assume $m = x_{i_1}^{a_1} \dots x_{i_d}^{a_d}$, where $a_1 + \dots + a_d = \binom{d + 2}{2} - k$ and let $\{F_{j_1}, \dots, F_{j_r}\}$ be the facets of $\Delta$ that contain the ridge $\tau = \{i_1, \dots, i_d\}$. Let $z_l$ be such that $\{i_1, \dots, i_d\} \cup z_l = F_l$. The collection of monomials that contribute to the coefficient of $m$ is 
        $$
            \{m x_\lambda x_{z_{j_1}} \st \lambda \subset \tau \qwhere |\lambda| = k - 1 \} \cup \dots \cup \{m x_\lambda x_{z_{j_r}} \st \lambda \subset \tau \qwhere |\lambda| = k - 1\}.
        $$
        In particular, for every $\lambda$ there exists a permutation $\sigma_\lambda$ such that the coefficient of $m$ in $e_k \circ F_\Delta$ is 
        $$
            \sum_{\lambda} \sign(\sigma_{\lambda}) \pi_\tau \partial F,
        $$
        where $\partial: C_d \to C_{d-1}$ is the boundary map of $\Delta$, and $\pi_\tau: C_{d-1} \to \{c \tau \st c \in \K\}$ is the projection from $C_{d-1}$ to the span of $\tau$ inside $C_{d-1}$. By assumption $F$ is a $d$-cycle of $\Delta$, hence $\partial F = 0$.
    \end{enumerate} 
\end{proof}
 
\begin{remark}
    In the proof of~\cref{l:symmetriccirc}, case $(2)$ can be seen as case $(3)$ applied to the boundary of a simplex of higher dimension. The key point being the fact that the map $e_k \circ$ mimics the boundary map of $\Delta$ when it is applied to polynomials of the form $\sum x_{F_i} V(F_i)$.
\end{remark}

An important consequence of~\cref{l:symmetriccirc} is the following. 
 
\begin{corollary}[{\bf Top betti numbers and $\textstyle\binom{d + 2}{2}$--coinvariant stresses}]
    Let $\Delta$ be a $d$-dimensional simplicial complex and $t = \binom{d + 2}{2}$. Then 
    $$
        \dim \Sco_{t}(\Delta) \geq \dim \tilde H_{d}(\Delta; \K).
    $$
    Moreover, assuming $\K$ is a field of characteristic zero and $\Delta$ is Cohen-Macaulay, equality holds.
\end{corollary}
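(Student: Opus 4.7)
The plan is to prove the inequality by constructing an injective linear map and to close the gap under the Cohen-Macaulay / characteristic zero hypotheses by a Hilbert-series computation.

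\textbf{Inequality.} First, I would observe that~\cref{l:symmetriccirc} attaches to each $d$-cycle $F = c_1 F_1 + \dots + c_s F_s \in \tilde H_d(\Delta;\K)$ the polynomial $F_\Delta = \sum_i c_i x_{F_i} V(F_i) \in \Sco_t(\Delta)$; since this assignment is linear in $(c_1,\dots,c_s)$, it defines a linear map $\Phi \colon \tilde H_d(\Delta;\K) \to \Sco_t(\Delta)$. The step beyond~\cref{l:symmetriccirc} is to show $\Phi$ is injective. I would argue as follows: each summand $c_i x_{F_i} V(F_i)$ is a polynomial in variables indexed by $F_i$, every monomial of which is divisible by $x_{F_i}$, so every monomial in it has set-theoretic support equal to the vertex set of $F_i$. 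Since distinct facets of $\Delta$ have distinct vertex sets, summands attached to distinct facets have pairwise disjoint monomial supports. Therefore $F_\Delta = 0$ forces each $c_i V(F_i) = 0$, and $V(F_i) \neq 0$ then gives $c_i = 0$.

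\textbf{Equality.} For the CM case in characteristic zero, the plan is to compute $\dim \Sco_t(\Delta)$ as a top-degree Hilbert-function value and then match it to $\dim \tilde H_d(\Delta;\K)$. Under the CM hypothesis, $e_1,\dots,e_{d+1}$ is an sop for $I_\Delta$ (by~\cref{l:ci}) and is therefore a regular sequence on $R/I_\Delta$, so~\cref{l:basicHS} applies and yields
$$
\HS_{\Kco(\Delta)}(q) = h_\Delta(q)\,[d]_q!.
$$
The polynomial $[d]_q!$ has degree $\binom{d+1}{2}$ and leading coefficient $1$, and $t = \binom{d+2}{2} = (d+1) + \binom{d+1}{2}$, so the coefficient of $q^t$ on the right equals $h_{d+1}(\Delta)$. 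Macaulay duality (\cref{t:macaulayduality}) then gives $\dim \Sco_t(\Delta) = \dim (\Kco(\Delta))_t = h_{d+1}(\Delta)$. To identify this with $\dim \tilde H_d(\Delta;\K)$, I would invoke the identity $h_{d+1}(\Delta) = (-1)^d \tilde\chi(\Delta)$ (which falls out of the defining relation of the $h$-vector at $t = 0$) together with Reisner's criterion, which forces $\tilde H_i(\Delta;\K) = 0$ for $i < d$ whenever $\Delta$ is CM over $\K$. Hence $\tilde\chi(\Delta) = (-1)^d \dim \tilde H_d(\Delta;\K)$, so $h_{d+1}(\Delta) = \dim \tilde H_d(\Delta;\K)$ and the two sides match.

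\textbf{Main obstacle.} I do not expect a serious obstacle: the hard work is already packaged in~\cref{l:symmetriccirc,l:basicHS,t:macaulayduality}. The step requiring the most care will be the injectivity of $\Phi$, which hinges on the observation that every monomial of $x_{F_i} V(F_i)$ has support exactly equal to the vertex set of $F_i$ — without this observation one could worry about cancellation across facets producing nontrivial cycles in the kernel of $\Phi$.
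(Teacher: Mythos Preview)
Your proposal is correct and matches the paper's proof essentially line for line: the paper also establishes the inequality by observing that the polynomials $x_{F_i}V(F_i)$ are linearly independent because every monomial in $x_{F_i}V(F_i)$ has support exactly $F_i$, and then handles the equality by reading off the $q^t$-coefficient of $\HS_{\Kco(\Delta)}(q)=h_\Delta(q)[d]_q!$ as $h_{d+1}$ and identifying $h_{d+1}=\dim\tilde H_d(\Delta;\K)$ in the Cohen--Macaulay case. Your justification of the last identification via $h_{d+1}=(-1)^d\tilde\chi(\Delta)$ and Reisner's criterion is a bit more explicit than the paper, which simply asserts it.
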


\begin{proof}
    Let $F_1, \dots, F_s$ be the $d$-dimensional facets of $\Delta$. Since the support of every monomial in $x_{F_i} V(F_i)$ is $F_i$, we conclude the polynomials 
    $$
        x_{F_1} V(F_1), \dots, x_{F_s} V(F_s)
    $$
    are linearly independent. Applying~\cref{l:symmetriccirc}, a basis of $\tilde H_d(\Delta; \K)$ gives us a linearly independent set of $\Sco_t(\Delta)$ of size $\dim \tilde H_d(\Delta; \K)$.

    For the last part of the statement, note that for a field $\K$ of characteristic zero and a Cohen-Macaulay complex (over $\K$), $\dim \tilde H_d(\Delta; \K) = h_{d+1}$. The result then follows since by~\cref{t:macaulayduality} $\dim \Sco_t(\Delta)$ is the coefficient of $t = \binom{d + 2}{2}$ in the polynomial
    $$
        \HS_{\Kco(\Delta)}(q) = h_\Delta(q)[d]_q!,
    $$
    which is equal to $h_{d+1}$.
\end{proof}

In the special case where $\Delta$ is a homology sphere, we have the following direct consequence of~\cref{l:symmetriccirc}.

\begin{corollary}[{\bf The top coinvariant stress of a $\K$-homology sphere}]\label{t:universalform}
    Let $\Delta = \tuple{F_1, \dots, F_s}$ be a $d$-dimensional $\K$-homology sphere with orientation $\e$. Then the top coinvariant stress of $\Delta$ is given by 
    $$
        F_\Delta = \e(F_1)x_{F_1}V(F_1) + \dots +\e(F_s)x_{F_s}V(F_s).
    $$
\end{corollary}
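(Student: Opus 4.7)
The plan is to show that the statement is a direct consequence of~\cref{l:symmetriccirc} once uniqueness of the top coinvariant stress is established. First I would invoke~\cref{t:gorensteinequivalences} to conclude that since $\Delta$ is a $\K$-homology sphere, the Stanley-Reisner ring $R/I_\Delta$ is Cohen-Macaulay and Gorenstein. Because $e_1, \dots, e_{d+1}$ is a system of parameters on the Cohen-Macaulay ring $R/I_\Delta$ (by~\cref{l:ci}), it is in fact a regular sequence there, and therefore the quotient $\Kco(\Delta) = R/(I_\Delta + (e_1, \dots, e_{d+1}))$ is Artinian Gorenstein.

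Next, I would apply Macaulay duality (\cref{t:macaulayduality}) to deduce that the inverse system $(I_\Delta + (e_1, \dots, e_{d+1}))^{-1}$ is a cyclic $R$-module. Its generator---the Macaulay dual generator---is concentrated in a single top degree, which by the Hilbert series computation preceding this corollary equals $\binom{d+2}{2}$. Thus there is, up to a scalar, a unique top coinvariant stress of $\Delta$, and it spans $\Sco_{\binom{d+2}{2}}(\Delta)$.

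Now I would apply~\cref{l:symmetriccirc} to the nonzero homology class $F = \e(F_1)F_1 + \dots + \e(F_s)F_s \in \tilde H_d(\Delta;\K)$, which is nonzero by definition of an orientation. The lemma produces the element
$$
    F_\Delta = \e(F_1)x_{F_1}V(F_1) + \dots + \e(F_s)x_{F_s}V(F_s) \in \Sco_{\binom{d+2}{2}}(\Delta).
$$
To conclude, I need to verify that $F_\Delta$ is nonzero. For this I would observe that for distinct facets $F_i \neq F_j$, the monomials appearing in $x_{F_i}V(F_i)$ and $x_{F_j}V(F_j)$ have different supports (namely $F_i$ and $F_j$), so the summands $\e(F_i)x_{F_i}V(F_i)$ are linearly independent; since at least one $\e(F_i)$ is nonzero, $F_\Delta \neq 0$. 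Combined with the uniqueness from the previous paragraph, this identifies $F_\Delta$ as \emph{the} top coinvariant stress of $\Delta$.

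There is no real obstacle here, as the corollary essentially packages the combinatorial computation of~\cref{l:symmetriccirc} with the Gorenstein uniqueness supplied by~\cref{t:gorensteinequivalences,t:macaulayduality}; the only step requiring care is verifying that $e_1, \dots, e_{d+1}$ forms a regular sequence on the Cohen-Macaulay ring $R/I_\Delta$ so that the Gorenstein property descends to $\Kco(\Delta)$.
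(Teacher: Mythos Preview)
Your proposal is correct and follows essentially the same approach as the paper: establish that $\Kco(\Delta)$ is Artinian Gorenstein of socle degree $\binom{d+2}{2}$ via~\cref{t:gorensteinequivalences} and the regularity of $e_1,\dots,e_{d+1}$, then invoke~\cref{l:symmetriccirc}. You are in fact slightly more careful than the paper in explicitly checking that $F_\Delta \neq 0$, which the paper leaves implicit.
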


\begin{proof}
    Since $\Delta$ is a Cohen-Macaulay orientable $d$-dimensional pseudomanifold, the ring $S/I_\Delta$ is Gorenstein and (since $e_1, \dots, e_{d+1}$ forms a regular sequence) $\Kco(\Delta)$ is also a Gorenstein ring. In particular, the socle of $\Kco(\Delta)$ is a $1$-dimensional vector space, and by~\cref{l:basicHS}, the socle degre of $\Kco(\Delta)$ is 
    $$  
        d + 1 + \binom{d + 1}{2} = \binom{d + 2}{2}.
    $$ 
    The result then follows by~\cref{l:symmetriccirc}.
\end{proof}



\begin{example}[{\bf Lack of orientability}]
    Let $\Sigma$ be the triangulation of $\R \Pp^2$ from~\cref{e:projective} and $\K$ a field of characteristic zero. Then $I_\Sigma + (e_1, e_2, e_3)$ has $6$ top coinvariant stresses, all of degree $5$. In particular
    $$
        \dim \Sco_6(\Sigma) = \dim \Kco(\Sigma)_6 = 0.
    $$
\end{example}

\begin{example}[{\bf Non-CM orientable pseudomanifolds}]
    Let $\Lambda$ be the triangulation of the pinched torus from~\cref{e:pinched}. Then $\Lambda$ has four top coinvariant stresses of degrees $3,4,5,6$. The top coinvariant stress of degree $6$ is the polynomial from~\cref{l:symmetriccirc}.
\end{example}


\section{Lefschetz properties in the monomial setting and inverse systems}\label{s:wlpmoninverse}

In the previous section we provided a formula for the unique top coinvariant stress of a homology sphere $\Delta$. It turns out that the same polynomial appears in a different setting, and has been studied again in the case where $\Delta$ is the boundary of a simplex. The goal of the next sections is to develop these connections. We begin by introducing the required notions and their motivations. 

The main idea in Stanley's proof of the $g$-theorem for simplicial polytopes, that also plays a central role in the recent proofs of the $g$-theorem for simplicial spheres, is the use of Lefschetz properties.
Let $A = R/I$ be an artinian graded algebra of socle degree $d$ and $L \in R_1$ a linear form.  We say $L$ is a {\bf weak Lefschetz element} of $A$ if the multiplication maps $\times L: A_i \to A_{i + 1}$ have full rank for every $i < d$. The linear form $L$ is said to be a {\bf strong Lefschetz element} if the multiplication maps $\times L^j: A_{i} \to A_{i + j}$ have full rank for every $i,j$ such that $i + j \leq d$.
If $A$ has a weak Lefschetz element (resp. strong Lefschetz element), we say $A$ has the {\bf weak Lefschetz property} (resp. {\bf strong Lefschetz property}), which we abbreviate to WLP (resp. SLP).
Note that if an algebra $A$ has the WLP, the coefficients of its Hilbert series form a unimodal sequence.

The study of Lefschetz properties of artinian algebras started with the seminal paper of Stanley~\cite{S80}, where he showed the following result.

\begin{theorem}[{\bf Monomial complete intersections and the SLP}, \cite{S80}]\label{t:monomialciWLP}
    Let $I = (x_1^{a_1}, \dots, x_n^{a_n}) \subset R = \K[x_1, \dots, x_n]$ where $\K$ is a field of characteristic zero. Then the algebra $A = \frac{R}{I}$ has the SLP.
\end{theorem}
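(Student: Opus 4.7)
I would take $L = x_1 + \cdots + x_n$ as the candidate strong Lefschetz element and exploit the graded tensor-product decomposition $A \cong A^{(1)} \otimes \cdots \otimes A^{(n)}$, where $A^{(i)} = \K[x_i]/(x_i^{a_i})$. Under this decomposition $L$ is the ``diagonal'' sum $\sum_i 1\otimes\cdots\otimes x_i\otimes\cdots\otimes 1$. My plan is to put an $\mathfrak{sl}_2$-module structure on $A$ whose raising operator is $\times L$ and whose Cartan element encodes the grading shifted to be symmetric about zero; complete reducibility of finite-dimensional $\mathfrak{sl}_2$-representations in characteristic zero will then yield the SLP immediately.

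On each factor I define $\K$-linear operators
\begin{align*}
    X_i(x_i^j) = x_i^{j+1}, \qquad H_i(x_i^j) = (2j-(a_i-1))\, x_i^j, \qquad Y_i(x_i^j) = j(a_i-j)\, x_i^{j-1},
\end{align*}
interpreting $x_i^{a_i}=0$. A direct calculation gives $[H_i,X_i]=2X_i$, $[H_i,Y_i]=-2Y_i$ and $[X_i,Y_i]=H_i$, so $(X_i,H_i,Y_i)$ is an $\mathfrak{sl}_2$-triple making $A^{(i)}$ the unique $a_i$-dimensional irreducible module. I then combine the factors by the coproduct rule $X = \sum_i X_i$, $H = \sum_i H_i$, $Y = \sum_i Y_i$, where each summand acts on its own tensor slot and as the identity on the others. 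One checks that $X$ is exactly multiplication by $L$, that $H$ acts as $2k - D$ on $A_k$ (with $D = \sum_i(a_i-1)$ the socle degree), and that $(X,H,Y)$ remains an $\mathfrak{sl}_2$-triple on $A$.

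With this in hand, $A$ decomposes as a direct sum of $\mathfrak{sl}_2$-irreducibles. On any irreducible summand the weight spaces are one-dimensional and $X^k$ is a nonzero scalar map from weight $w$ to weight $w+2k$ whenever both weights belong to the module. Summing over the irreducible constituents gives $\operatorname{rank}(\times L^k \colon A_i \to A_{i+k}) = \min(\dim A_i, \dim A_{i+k})$ for all $i,k$, which is precisely the SLP.

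\textbf{Main obstacle.} The key technical step is locating the correct lowering operator $Y_i$: the coefficient $j(a_i-j)$ is forced both by $Y_i(1)=0$ and by $[X_i, Y_i]=H_i$, and verifying the latter relation is a short but indispensable calculation. Characteristic zero enters in two places: the scalars $j(a_i-j)$ must remain nonzero for $0<j<a_i$ (so that $A^{(i)}$ is genuinely the $a_i$-dimensional irreducible), and complete reducibility of finite-dimensional $\mathfrak{sl}_2$-modules is needed to pass from the triple action to the rank statement. Both steps can fail in positive characteristic, consistent with the fact that the SLP for monomial complete intersections is subtle outside characteristic zero.
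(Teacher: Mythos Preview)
Your proof is correct. The paper does not actually prove this theorem; it is quoted as a result of Stanley~\cite{S80}, and the surrounding text only remarks that ``the technique used was the Hard Lefschetz theorem from algebraic topology.'' So there is no proof in the paper to compare against line by line.

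That said, your route is genuinely different from the one the paper attributes to Stanley. Stanley's original argument identifies $A$ with the cohomology ring of a product of projective spaces and invokes the Hard Lefschetz theorem; the linear form $L$ corresponds to an ample class. Your argument instead builds the $\mathfrak{sl}_2$-triple directly and uses only the representation theory of $\mathfrak{sl}_2$ in characteristic zero (complete reducibility plus the structure of irreducibles). This is the approach recorded, for instance, in~\cite[Theorem~3.32]{lefschetzbook}, which the paper itself cites in \cref{s:future}. The trade-off is clear: the Hard Lefschetz route is conceptually short once one accepts the geometric input, but that input is deep; your $\mathfrak{sl}_2$ argument is entirely elementary and self-contained, and it makes transparent exactly where characteristic zero is used (nonvanishing of the coefficients $j(a_i-j)$ and Weyl's theorem). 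Both approaches yield the same Lefschetz element $L = x_1+\cdots+x_n$, consistent with the paper's later use of~\cite[Proposition~2.2]{MMN2011}.
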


\cref{t:monomialciWLP} shows that an important family of algebras has the SLP. Since then, several different classes of algebras have been studied with respect to their Lefschetz properties. In Stanley's proof of~\cref{t:monomialciWLP}, the technique used was the Hard Lefschetz theorem from algebraic topology. Over the years, these properties have been studied from a plethora of areas, including algebraic and differential geometry, commutative algebra and combinatorics. 

In this section, we explore the connections of inverse systems and Lefschetz properties of monomial ideals, and in particular, the consequences of the $g$-theorem in this field. We then apply the notion of coinvariant stresses to the failure of Lefschetz properties of monomial algebras.

We note that in~\cite[Proposition 2.2]{MMN2011}, the authors observed that when $A$ is defined by a monomial ideal, the linear form $L = x_1 + \dots + x_n$ is a weak (resp. strong) Lefschetz element if and only if $A$ has the WLP (resp. SLP). 

One of the first examples of a level monomial algebra failing the WLP is due to Brenner and Kaid~\cite{BK2007}, where they used algebraic geometric techniques to conclude that the algebra 
$$
    A = \frac{\K[x,y,z]}{(xyz, x^3, y^3, z^3)}
$$
fails the WLP. This example was then generalized by Migliore, Miró-Roig and Nagel~\cite{MMN2011}, where the authors used liaison theory to show the following.

\begin{theorem}[{\bf Monomial almost complete intersections and the SLP}, \cite{MMN2011}]\label{t:monomialaciWLP}
    The algebra 
    $$
        A = \frac{\K[x_1,\dots, x_n]}{(x_1\dots x_n,x_1^n, \dots, x_n^n)}
    $$
    fails the WLP.
\end{theorem}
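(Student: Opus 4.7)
The plan is to use the top coinvariant stress of the boundary of the $(n-1)$-simplex to produce an explicit class in the cokernel of multiplication by $L = x_1 + \cdots + x_n$. By~\cite[Proposition 2.2]{MMN2011}, for the monomial ideal $I = (x_1 \cdots x_n, x_1^n, \dots, x_n^n)$ it suffices to show that $L$ is not a weak Lefschetz element of $A = R/I$, and by Macaulay duality (\cref{t:macaulayduality}) this reduces to producing a nonzero element of $(I + L)^{-1}$ at a degree where the WLP would force the cokernel of $\times L$ to vanish.

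Set $\Delta = \partial \Delta^{n-1}$, an $(n-2)$-dimensional $\K$-homology sphere whose Stanley--Reisner ideal is $I_\Delta = (x_1 \cdots x_n)$. I would invoke~\cref{t:universalform} to obtain the top coinvariant stress
$$
F_\Delta \;=\; \sum_{i=1}^n \e(F_i)\, x_{F_i}\, V(F_i) \;\in\; S_{-\binom{n}{2}},
$$
where $F_i = [n] \setminus \{i\}$. The next step is to check that $F_\Delta$ lies in $(I + L)^{-1}$: each monomial of $F_\Delta$ has support contained in some $F_i$, so $(x_1 \cdots x_n) \circ F_\Delta = 0$; every variable appears to exponent at most $1 + (n-2) = n-1$, so $x_j^n \circ F_\Delta = 0$ for all $j$; and because $F_\Delta$ is a coinvariant stress, $L \circ F_\Delta = e_1 \circ F_\Delta = 0$. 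The nonvanishing of $F_\Delta$ then forces $\dim (R/(I+L))_{\binom{n}{2}} \geq 1$.

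To upgrade this to a failure of the WLP, I would compare with the Hilbert function $h_k = \dim A_k$. If $L$ were a weak Lefschetz element then $\dim (R/(I+L))_k = \max\{0,\, h_k - h_{k-1}\}$ for every $k$, so the class of $F_\Delta$ obstructs the WLP precisely when $h_{\binom{n}{2}} \leq h_{\binom{n}{2}-1}$. Using
$$
\HS_A(t) \;=\; (1 + t + \cdots + t^{n-1})^n \;-\; t^n (1 + t + \cdots + t^{n-2})^n,
$$
both summands are palindromic and unimodal with centers of symmetry $\binom{n}{2}$ and $\binom{n}{2} + n/2$ respectively, and a direct comparison of coefficients at degrees $\binom{n}{2}-1$ and $\binom{n}{2}$ confirms the required inequality.

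The main obstacle I anticipate is this final numerical step. Producing $F_\Delta$ and checking that it lands in the correct inverse system is essentially formal once~\cref{t:universalform} is in hand; the real work is verifying that the cokernel class $F_\Delta$ produces is genuinely unexpected, which is a Hilbert-function comparison. In the broader setting of~\cref{t:failureintro}, this role is played by the hypothesis $f_{d-1}(\Delta) \geq f_d(\Delta)$; for $\Delta = \partial \Delta^{n-1}$ it reduces to the elementary bound $\binom{n}{2} \geq n$, valid for $n \geq 3$, with the cases $n \leq 2$ handled by direct inspection.
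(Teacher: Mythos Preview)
Your two-step strategy---produce a cokernel class via the top coinvariant stress, then verify a Hilbert-function inequality---is exactly the paper's approach (Proposition~\ref{p:generalstep1} plus Theorem~\ref{t:inequalitypseudomanifoldwoboundary}, specialised to $\Delta = \partial\Delta^{n-1}$), and your Step~1 is correct as written: $F_\Delta$ is, up to scalar, the Vandermonde $V([n])$, which is precisely the kernel element used in~\cite{MMN2011}.

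The gap is in Step~2. Writing $\HS_A = P - Q$ with $P = (1+\cdots+t^{n-1})^n$ and $Q = t^n(1+\cdots+t^{n-2})^n$, palindromicity and unimodality tell you that at $k=\binom{n}{2}$ both differences $P_k-P_{k-1}$ and $Q_k-Q_{k-1}$ are \emph{nonnegative} (indeed $P$ is at its peak there, so $P_k-P_{k-1}>0$). What you need is the \emph{opposite} comparison $P_k-P_{k-1}\le Q_k-Q_{k-1}$, and this does not follow from symmetry and unimodality alone; some additional identity relating the two sequences is required. In the paper this is exactly the content of Lemma~\ref{l:linkageb} (proved via a basic double link), which together with Lemma~\ref{l:inequalitiescompositions} yields Theorem~\ref{t:inequalitypseudomanifoldwoboundary}. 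Your fallback to the hypothesis $f_{d-1}\ge f_d$ is the right reduction, but it is not ``elementary'' in the sense you suggest: verifying that $f_{d-1}\ge f_d$ implies the Hilbert-function inequality \emph{is} Theorem~\ref{t:inequalitypseudomanifoldwoboundary}, so you are invoking the general result rather than giving an independent argument. (Also note that for $n\le 2$ the algebra $A$ actually has the WLP, so ``direct inspection'' here means excluding those cases rather than verifying them.)
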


There are two key steps in the proof of~\cref{t:monomialaciWLP}. For some special $i$:
\begin{enumerate}
    \item[] {\bf Step 1.} There exists a polynomial $F$ in the kernel of the map $\times L^T: A_i \to A_{i - 1}$; and \label{en:1}
    \item[] {\bf Step 2.} $\HF(i, A) \leq \HF(i - 1, A)$. \label{en:2}
\end{enumerate}

We now show that for a monomial algebra $A = \frac{R}{I}$, where $I = (x_1^{a_1}, \dots, x_n^{a_n}) + I_\Delta$ for some simplicial complex $\Delta$, step 1 is intrinsically related to studying sops of $I_\Delta + L$.

\begin{lemma}\label{l:samematrix}
    Let $f \in R = \K[x_1, \dots, x_n]$, $S = \K[y_1, \dots, y_n]$ where $R$ acts on $S$ by contraction, and $J \subset R$ a monomial ideal. Then the matrix $M$ that represents the map 
    $$
    \times f^T: R_i \to R_{i - 1} 
    $$
    is the same matrix that represents the map 
    $$
    f \circ : S_{-i} \to S_{-i + 1}
    $$
    where $\circ$ denotes contraction. 

    In particular, the matrix that represents the map
    $$
        \times f^T: (R/J)_i \to (R/J)_{i - 1}
    $$
    is obtained from $M$ by deleting rows and columns corresponding to monomials in $J$.
\end{lemma}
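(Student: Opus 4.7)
The plan is to compute both matrices directly in the standard monomial bases and check entrywise equality. I will fix ordered bases $\{x^a : |a| = i\}$ of $R_i$ and $\{y^a : |a| = i\}$ of $S_{-i}$, indexed by exponent vectors $a \in \N^n$ in the same order, so that an entry in position $(a,b)$ is unambiguous in both settings.

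Write $f = \sum_\alpha c_\alpha x^\alpha$. For the multiplication side, $f x^a = \sum_\alpha c_\alpha x^{a+\alpha}$, so the matrix of $\times f : R_{i-1} \to R_i$ has entry $c_{b-a}$ in position $(b,a)$ when $b \geq a$ componentwise and $0$ otherwise. Transposing, the matrix of $\times f^T : R_i \to R_{i-1}$ has entry $c_{b-a}$ in position $(a,b)$. For the contraction side, the defining rule $x^\alpha \circ y^b = y^{b-\alpha}$ (which is $0$ unless $b \geq \alpha$) gives $f \circ y^b = \sum_{\alpha \leq b} c_\alpha y^{b-\alpha}$, so the matrix of $f \circ : S_{-i} \to S_{-i+1}$ has entry $c_{b-a}$ in position $(a,b)$ exactly when $b \geq a$. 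The two matrices coincide, which proves the first part.

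For the ``in particular'' claim, the key point is that a monomial ideal $J$ gives each graded piece $(R/J)_d$ a canonical basis consisting of the monomials of degree $d$ not lying in $J$. The induced multiplication map $\times f : (R/J)_{i-1} \to (R/J)_i$ then has matrix obtained from that of $\times f : R_{i-1} \to R_i$ by deleting the rows indexed by monomials of degree $i$ in $J$ and the columns indexed by monomials of degree $i-1$ in $J$. Since row/column deletion commutes with transposition, combining this with the first part yields the ``in particular'' claim.

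The main obstacle, such as it is, is purely bookkeeping: carefully tracking which index runs over rows and which over columns when transposing, and ensuring that the monomial bases of $R_i$ and $S_{-i}$ are indexed in a compatible order. Conceptually the lemma just encodes the standard fact that the contraction action of $R$ on $S$ is the graded dual of the multiplication action of $R$ on itself, so no deeper input is required.
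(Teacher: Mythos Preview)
Your proof is correct and follows essentially the same approach as the paper's: a direct verification that the transpose of multiplication agrees with contraction on monomial bases, followed by the observation that passing to $R/J$ for a monomial ideal $J$ simply removes basis elements. The only cosmetic difference is that the paper first checks the case $f = x_k$ and then extends to general $f$ by writing $\times f^T$ as a sum and composition of the maps $\times x_k^T$, whereas you compute the matrix entries for general $f$ in one step; both routes amount to the same computation.
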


\begin{proof}
    Let $F = \sum a_j m_j \in R$ where $m_j$ is a monomial of degree $i - 1$ and $a_j \in \K$ for every $j$. We know multiplication by $x_k$ corresponds to increasing the power of $x_k$ in every $m_j$ by $1$. The transpose of this map then satisfies:
    $$
        \times x_k^T(x_1^{a_1}\dots x_n^{a_n}) = 
        \begin{cases}
            x_1^{a_1} \dots x_k^{a_k - 1} \dots x_n^{a_n} \quad \mbox{ if $a_k > 0$} \\
            0 \quad \mbox{ otherwise}    
        \end{cases}.
    $$
    In particular, since $\times f^T$ is a composition and sum of the maps above, given the isomorphism $\varphi: R \to S$ where $\varphi(x_j) = y_j$, we have $\varphi \circ \times f^T$ is the same map as $f \circ: S_{-i} \to S_{-i + 1}$. Noticing that the matrix that represents the map $\varphi$ is just the identity matrix, the result follows.

    The last part of the statement follows since taking a quotient of $R$ by a monomial ideal $J$ does not add any relation besides removing elements from the basis of each graded piece of $R$.
\end{proof}

We now state the main consequence of~\cref{l:samematrix}.

\begin{theorem}[{\bf Lefschetz properties of monomial ideals via inverse systems}]\label{t:wlpinversesystems}
    Let 
    $$  
        J = I + (x_1^{a_1},\dots, x_n^{a_n}) \subset R = \K[x_1, \dots, x_n]
    $$
    where $I$ is a monomial ideal, $L = x_1 + \dots + x_n$ and $A = \frac{R}{J}$. Let $S = \K[y_1,\dots, y_n]$ be a polynomial ring where $R$ acts on $S$ by contraction. Then a multiplication map 
    $$
        \times L: A_{i - 1} \to A_{i}
    $$
    fails to be surjective if and only if there exists a sequence of elements $(f_1, \dots, f_d)$ such that $I + (f_1, \dots, f_d, L)$ is an artinian ideal, and a polynomial $G \in (I + (L, f_1, \dots, f_d))^{-1}_{-i}$ such that 
    $$
        G = \sum_j c_{j} y_1^{b_{j1}}\dots y_n^{b_{jn}},
    $$
    $c_j \in \K$ and $b_{jk} < a_{k}$ for every $j, k$.
\end{theorem}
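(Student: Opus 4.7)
The plan is to reinterpret the failure of surjectivity of $\times L$ as the non-injectivity of its transpose, and then translate the latter into a statement about the contraction action on the inverse system of $J = I + (x_1^{a_1},\dots,x_n^{a_n})$, exploiting Lemma~\ref{l:samematrix}.

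First I would observe that over the field $\K$, the multiplication $\times L: A_{i-1} \to A_i$ fails to be surjective if and only if its transpose $\times L^T: A_i \to A_{i-1}$ fails to be injective. By Lemma~\ref{l:samematrix}, the matrix representing $\times L^T$ on $A_i = (R/J)_i$ coincides with the matrix representing the contraction map $L \circ : J^{-1}_{-i} \to J^{-1}_{-i+1}$. Since $J$ is monomial, $J^{-1}_{-i}$ is spanned by those monomials $y_1^{b_1} \dots y_n^{b_n}$ of degree $i$ with $b_k < a_k$ for every $k$ and whose underlying monomial is not in $I$. Hence failure of surjectivity is equivalent to the existence of a nonzero $G \in J^{-1}_{-i}$ with $L \circ G = 0$, and any such $G$ automatically satisfies the exponent bound $b_{jk} < a_k$ demanded by the statement.

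For the ``only if'' direction I would then take $(f_1,\dots,f_n) = (x_1^{a_1},\dots,x_n^{a_n})$, so that $I + (L, f_1,\dots,f_n) = J + (L)$ is artinian (as $J$ already is), and check that the $G$ produced above lies in $(I + (L, f_1,\dots,f_n))^{-1}_{-i}$: namely, $I \circ G = 0$ since $G \in J^{-1}$, $L \circ G = 0$ by construction, and each $f_k \circ G = x_k^{a_k} \circ G = 0$ by the exponent bound. For the ``if'' direction, given any sequence $(f_1,\dots,f_d)$ making $I + (L, f_1,\dots,f_d)$ artinian together with a nonzero $G$ in its inverse system in degree $-i$ satisfying $b_{jk} < a_k$, the exponent bound forces $(x_1^{a_1},\dots,x_n^{a_n}) \circ G = 0$, hence $G \in J^{-1}_{-i}$, and $L \circ G = 0$ then provides a nonzero kernel element of $\times L^T$ via Lemma~\ref{l:samematrix}.

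The argument is essentially a three-way bookkeeping translation between cokernel, transpose kernel, and inverse system, and I do not foresee any substantive obstacle. The only subtle point is clarifying the role of the auxiliary sequence $(f_1,\dots,f_d)$: it is not needed to \emph{detect} failure of surjectivity (the pure-power choice always works), but it is what allows one to invoke the inverse-system formalism of Theorem~\ref{t:macaulayduality} in the greatest generality, which is the form in which the result will be applied later.
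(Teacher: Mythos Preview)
Your proposal is correct and follows essentially the same route as the paper: reduce surjectivity to injectivity of the transpose, invoke Lemma~\ref{l:samematrix} to pass to contraction, and verify both directions via the exponent bounds. The only difference is cosmetic: for the forward direction the paper takes $(f_1,\dots,f_d)$ to be a generating set of the Gorenstein annihilator ideal $P$ of $G$ (so that Macaulay duality is invoked explicitly), whereas you take the pure powers $(x_1^{a_1},\dots,x_n^{a_n})$ directly, which is a perfectly valid and slightly more economical choice.
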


\begin{proof}
    The multiplication map $\times A_{i - 1} \to A_i$ fails to be surjective if and only if its transpose fails to be injective. By~\cref{l:samematrix}, a nonzero polynomial $G$ in the kernel of $\times L^T: A_i \to A_{i - 1}$ exists if and only if there exists a polynomial $G'$ satisfying $m \circ G' = L \circ G' = x_i^{a_i} \circ G' = 0$ for every $i$, and every generator $m$ of $I$. Taking any such $G'$ and the Gorenstein artinian ideal $P$ such that $P^{-1} = (G')$, we conclude $I + L \subset P = (f_1, \dots, f_d)$, hence the generating set $(f_1, \dots, f_d)$ of $P$ can be taken as the sequence of elements from the statement. The assumption on the powers of variables in the monomials appearing in $G$ guarantees $x_i^{a_1} \in P$ for every $i$. 
\end{proof}

\cref{t:wlpinversesystems} gives a new perspective as to why do monomial ideals fail the WLP. A special case, which is the main motivation for the next section, is the case where $I$ is a monomial artinian reduction of a Cohen-Macaulay squarefree monomial ideal $I_\Delta$, and the sequence of elements $f_1, \dots, f_d$ from~\cref{t:wlpinversesystems} is a sop of $I_\Delta$.
 
Our next results exemplify how this approach can be used to translate results from the generic to the monomial setting. We first give the necessary definition (see~\cite{MN2013} and~\cite[p. 3]{MNZ2024} for more details on~\cref{d:stacked}).

\begin{corollary}\label{c:1}
    Let $\Delta$ be a $(d-1)$--dimensional simplicial sphere, $L = x_1 + \dots + x_n$, $\theta_1,\dots, \theta_{d - 1}$ general linear forms and $J = I_\Delta + (x_1^{a_1},\dots, x_n^{a_n})$ where $a_i \geq d + 1$ for every $i$. Let $K^i_L$ denote the kernel of the map $\times L^T: (\frac{R}{J})_{i} \to (\frac{R}{J})_{i - 1}$ and $K^i_j$ the kernel of the map $\times \theta_j^T: (\frac{R}{J})_i \to (\frac{R}{J})_{i - 1}$. Then 
    $$
        \dim K^i_L \cap K^i_1 \cap \dots \cap K^i_{{d-1}} > 0 \qfor i \leq d.
    $$
    In particular, the multiplication maps $\times L: (\frac{R}{J})_{i - 1} \to (\frac{R}{J})_{i}$ are not surjective for $i \leq d$.
\end{corollary}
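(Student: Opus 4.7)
The plan is to convert the intersection of kernels into a question about a graded piece of an inverse system, use a numerical observation to strip away the pure-power generators of $J$, and then conclude via the Gorenstein structure coming from the sphere hypothesis.

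First, Lemma \ref{l:samematrix} identifies the transpose map $\times L^T$ on $(R/J)_i$ with the contraction map $L \circ -$ on the $(-i)$-th graded piece of the inverse system of $J$, and similarly for each $\theta_j$. Using the standard identification $R \cong S$, this gives
$$
    K^i_L \cap K^i_1 \cap \dots \cap K^i_{d-1} \;=\; \bigl(I_\Delta + (x_1^{a_1},\dots,x_n^{a_n}) + (L,\theta_1,\dots,\theta_{d-1})\bigr)^{-1}_{-i}.
$$
The crucial numerical observation is that for $i \le d$ and $a_k \ge d+1$, every monomial of degree $i$ has each variable to power at most $d < a_k$, so the pure-power generators $x_k^{a_k}$ contribute no constraint on polynomials of degree $-i$. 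The intersection therefore collapses to $\bigl(I_\Delta + (L,\theta_1,\dots,\theta_{d-1})\bigr)^{-1}_{-i}$.

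Second, since $\Delta$ is a simplicial sphere, Theorem \ref{t:gorensteinequivalences} gives that $R/I_\Delta$ is Gorenstein of Krull dimension $d$. The general linear forms $\theta_1,\dots,\theta_{d-1}$ together with $L$ form a linear system of parameters (hence a regular sequence, by Cohen-Macaulayness), so $A = R/(I_\Delta + (L,\theta_1,\dots,\theta_{d-1}))$ is Artinian Gorenstein with Hilbert series $h_\Delta(t)$ by Lemma \ref{l:basicHS}. Macaulay duality (Theorem \ref{t:macaulayduality}) then identifies the dimension of the inverse system in degree $-i$ with $h_i(\Delta)$, reducing the problem to showing $h_i(\Delta) > 0$ for $0 \le i \le d$.

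Finally, this positivity is a standard fact about standard graded Artinian algebras: since $A$ is generated in degree one, $A_j = A_1 \cdot A_{j-1}$ for $j \ge 1$, so if $A_i = 0$ for some $i < d$, then by induction $A_d = 0$, contradicting socle degree $d$. The ``in particular'' assertion follows at once, since a nonzero element of $K^i_L$ forces $\times L^T : A_i \to A_{i-1}$ to have nontrivial kernel, and hence $\times L : A_{i-1} \to A_i$ to fail surjectivity. The only conceptual subtlety I anticipate is the numerical step in the first paragraph: the inequality $i \le d < a_k$ is precisely what makes the pure-power condition vacuous and lets the whole problem reduce to a question about the $h$-vector of a linear Gorenstein quotient---everything else is routine once this reduction is in hand.
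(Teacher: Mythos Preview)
Your proof is correct and follows essentially the same route as the paper: reduce the kernel intersection to a graded piece of the inverse system of $I_\Delta + (L,\theta_1,\dots,\theta_{d-1})$, and then invoke the Gorenstein/lsop structure to see this has dimension $h_i(\Delta)>0$. The only noteworthy difference is in how the pure-power generators $x_k^{a_k}$ are discarded: you use the direct degree bound $i \le d < a_k$, whereas the paper observes that the artinian quotient has socle degree $d$ and hence $x_j^{d+1}$ already lies in $I_\Delta + (L,\theta_1,\dots,\theta_{d-1})$---both arguments are valid and yield the same conclusion.
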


\begin{proof}
    The set of zero divisors of $\frac{R}{I_\Delta}$ is given by the union of associated primes of $I_\Delta$, hence if $I_\Delta \neq (x_1, \dots, x_n)$, $L$ is not a zero divisor of $\frac{R}{I_\Delta}$. The ring $\frac{R}{I_\Delta + L}$ is a standard graded algebra of dimension $d - 1$, so a set of general linear forms $\theta_1, \dots, \theta_{d-1}$ is a lsop, let $A = \frac{R}{I_\Delta + (L, \theta_1, \dots, \theta_{d-1})}$. By~\cref{l:basicHS} and since $\Delta$ is a simplicial sphere, we know $\dim A_i = h_i > 0$ for $i \leq d$ and $\dim A_i = 0$ for $i > d$. In particular, $x_j^{d + 1} \in I_\Delta + (L, \theta_1, \dots, \theta_{d-1})$ for every $j$. Let $G \in (I_\Delta + (L, \theta_1, \dots, \theta_{d-1}))^{-1}_{-i}$, where $i \leq d$. By~\cref{t:wlpinversesystems,l:samematrix}, there exists $G'$ such that $G' \in K^i_L, K^i_1, \dots, K^i_d$. The last statement follows since the transpose of the maps are not injective.  
\end{proof}

\begin{definition}\label{d:stacked}
    Let $\Delta$ be a simplicial $(d-1)$--sphere and $1 \leq k \leq \frac{d}{2} - 1$. Then $\Delta$ is said to be {\bf $k$-stacked} if $g_{k + 1} = 0$.
\end{definition}

\begin{corollary}\label{c:2}
    Let $\Delta$ be a $(d-1)$--simplicial sphere that is not $k$-stacked for some fixed $k \leq \frac{d}{2} - 1$, and $J = I_\Delta + (x_1^{a_1},\dots, x_n^{a_n})$ where $a_i > k + 1$. Then there exists $d+1$ linear forms $\theta_1,\dots, \theta_{d+1}$ such that 
    $$
        \dim K_1^{k + 1} \cap \dots \cap K_{d+1}^{k + 1} > 0,
    $$
    where $K_j^{k + 1}$ is the kernel of the map $\times \theta_j^T: (\frac{R}{J})_{k + 1} \to (\frac{R}{J})_{k}$.
\end{corollary}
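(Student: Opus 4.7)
The plan is to produce a nonzero element of the inverse system $(I_\Delta+(\theta_1,\dots,\theta_{d+1}))^{-1}_{-(k+1)}$ using Macaulay duality (\cref{t:macaulayduality}) together with the positivity portion of the $g$-theorem for simplicial spheres, and then to transport it into $K_1^{k+1}\cap\dots\cap K_{d+1}^{k+1}$ via~\cref{l:samematrix}. The overall structure mirrors the proof of~\cref{c:1} exactly, with the ``Lefschetz-style'' surjectivity failure in low degree replaced by a dimension-counting failure in degree $k+1$ coming directly from the $g$-vector.

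I would first pick $\theta_1,\dots,\theta_d\in R_1$ generic enough to form a linear system of parameters for $I_\Delta$, and let $\theta_{d+1}\in R_1$ be arbitrary (genericity is not needed here). Writing $B=R/(I_\Delta+(\theta_1,\dots,\theta_d))$ and $A'=R/(I_\Delta+(\theta_1,\dots,\theta_{d+1}))$, \cref{l:basicHS} gives $\dim B_i=h_i$, and $\dim A'_{k+1}$ is the dimension of the cokernel of $\times\theta_{d+1}\colon B_k\to B_{k+1}$, which is at least $h_{k+1}-h_k=g_{k+1}$. The non-$k$-stacked hypothesis says $g_{k+1}\neq 0$; combined with the non-negativity $g_{k+1}\geq 0$ supplied by the $g$-theorem for simplicial spheres, this forces $g_{k+1}>0$. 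Macaulay duality then yields
\[
\dim(I_\Delta+(\theta_1,\dots,\theta_{d+1}))^{-1}_{-(k+1)}\;=\;\dim A'_{k+1}\;\geq\;g_{k+1}\;>\;0,
\]
so I may fix a nonzero $G$ in this inverse system.

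To transport $G$ into the kernel intersection, identify $S$ with $R$ via $y_i\mapsto x_i$. The condition $I_\Delta\circ G=0$, combined with the fact that generators of $I_\Delta$ are squarefree monomials supported on non-faces, forces every monomial of $G$ to have support lying inside $\Delta$. Moreover $\deg G=k+1<a_i$ for every $i$, so each variable appears with exponent strictly less than $a_i$ in every monomial of $G$. Hence no monomial of $G$ lies in $J$, and $G$ represents a nonzero element of $(R/J)_{k+1}$. The relations $\theta_j\circ G=0$ in $S$, combined with~\cref{l:samematrix}, then yield $\times\theta_j^{T}G=0$ in $(R/J)_k$, so $G\in K_j^{k+1}$ for every $j=1,\dots,d+1$. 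The only substantive input is the non-negativity $g_{k+1}\geq 0$, i.e.\ the $g$-theorem for simplicial spheres; the rest is a routine inverse-system translation, completely parallel to the proof of~\cref{c:1}.
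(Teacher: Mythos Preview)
Your proof is correct and follows the same overall architecture as the paper's: produce a nonzero element of $(I_\Delta+(\theta_1,\dots,\theta_{d+1}))^{-1}_{-(k+1)}$ and transport it to $\bigcap_j K_j^{k+1}$ via \cref{l:samematrix}. The one notable difference is in how you establish $\dim A'_{k+1}>0$. The paper invokes the full $g$-theorem to choose $\theta_{d+1}$ a \emph{strong Lefschetz element} of $B=R/(I_\Delta+(\theta_1,\dots,\theta_d))$, obtaining the exact equality $\dim(B/\theta_{d+1})_{k+1}=g_{k+1}$ (which simultaneously yields $g_{k+1}\ge 0$ and hence $>0$). You instead take $\theta_{d+1}$ arbitrary, use only the elementary rank bound $\dim\operatorname{coker}(\times\theta_{d+1})\ge h_{k+1}-h_k$, and then appeal separately to the numerical non-negativity $g_{k+1}\ge 0$ from the $g$-theorem. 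Your route is marginally more flexible (no special $\theta_{d+1}$ needed) and you are more explicit than the paper about why the resulting $G$ stays nonzero in $R/J$; both arguments ultimately rest on the $g$-theorem for simplicial spheres.
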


\begin{proof}
    The proof of the $g$-theorem for simplicial spheres~\cite{A2018} implies there exists a lsop $\theta_1, \dots, \theta_{d}$ of $\frac{R}{I_\Delta}$, with a strong Lefschetz element $\theta_{d+1}$ of $A = \frac{R}{I_\Delta + (\theta_1, \dots, \theta_d)}$. The SLP of $A$ then implies  
    $$
        \dim (\frac{A}{\theta_{d+1}})_{k + 1} = h_{k + 1} - h_{k} = g_{k + 1} > 0,
    $$
    where the last inequality follows because of the condition on $k$ and since $\Delta$ is not a $k$-stacked sphere. 
    Let 
    $$
        T = I_\Delta + (\theta_1, \dots, \theta_{d+1}) \qand F \in T^{-1}_{-k-1}.
    $$
    By definition we know $\theta_i \circ F = 0$ for every $i = 1, \dots, d + 1$.~\cref{l:samematrix} then implies there exists a polynomial $F'$ in the kernel of $\times \theta_i^T: (\frac{R}{J})_{k + 1} \to (\frac{R}{J})_{k}$ for every $i$.
\end{proof}

Although it may seem as if~\cref{c:1,c:2} can be used to prove failure of WLP of monomial ideals, this turns out to rarely be the case. As was pointed out previously, in order to prove failure of WLP using these results, we would have to show the Hilbert function of the algebra is decreasing at the same degree as the element in the kernel, which turns out to rarely be true. In fact, since the algebras in~\cref{c:1,c:2} are level monomial algebras, a result due to Hausel~\cite[Theorem 6.2]{H2005} says the multiplication maps by a general linear form are injective up to half of the socle degree.

Instead,~\cref{c:1,c:2} show us one way to translate results about regular sequences of monomial ideals and Lefschetz properties of (linear) artinian reductions of monomial ideals, to the monomial setting.

\begin{example}\label{e:sphereinversewlp}
    Consider the simplicial sphere $\Gamma$ from~\cref{e:pinched}. The geometric realization of $\Gamma$ is pictured in~\cref{e:pinched}, and since  $|\Gamma|$ is the convex hull of its vertices, Macaulay2 computations imply the linear forms
    \begin{align*}
        \theta_1 & = \frac{1}{2}x_{2}+\frac{1}{2}x_{3}-\frac{1}{2}x_{4}-\frac{1}{2}x_{5}-\frac{1}{2}x_{6}-\frac{1}{2}x_{7}+\frac{1}{2}x_{8}+\frac{1
       }{2}x_{9} \\
       \theta_2 & = -x_{0}+x_{1}+\frac{1}{2}x_{2}+\frac{1}{2}x_{3}+\frac{1}{2}x_{4}+\frac{1}{2}x_{5}-\frac{1}{2}x_{6}-\frac{1}{2}x_{7}-\frac{1}{2}x
       _{8}-\frac{1}{2}x_{9} \\
       \theta_3 & = \frac{1}{2}x_{2}-\frac{1}{2}x_{3}-\frac{1}{2}x_{4}+\frac{1}{2}x_{5}-\frac{1}{2}x_{6}+\frac{1}{2}x_{7}+\frac{1}{2}x_{8}-\frac{1
       }{2}x_{9}
    \end{align*}
    form an lsop of $I_\Gamma$, where the linear forms are obtained via the construction in~\cref{eq:plinear}. Abusing notation, the Macaulay dual generator of $I_\Gamma + (\theta_1, \theta_2, \theta_3)$ is the polynomial $F_1 =   
    -2\,x_{0}^{3}-2\,x_{1}^{3}+x_{1}^{2}x_{2}+x_{2}^{3}+x_{1}^{2}x_{3}-x_{1}x_{2}x_{3}+x_{2}x_{3}^{2}+x_{1}^{2}x_{4}-x_{1}
      x_{3}x_{4}+x_{3}x_{4}^{2}+x_{1}^{2}x_{5}-x_{1}x_{2}x_{5}-x_{1}x_{4}x_{5}+x_{2}x_{5}^{2}+x_{4}x_{5}^{2}-x_{5}^{3}+x_{0
      }^{2}x_{6}+x_{3}^{2}x_{6}-x_{3}x_{4}x_{6}+x_{3}x_{6}^{2}+x_{0}^{2}x_{7}+x_{2}^{2}x_{7}+x_{4}^{2}x_{7}-x_{2}x_{5}x_{7}-x
      _{4}x_{5}x_{7}+x_{5}^{2}x_{7}-x_{0}x_{6}x_{7}-x_{4}x_{6}x_{7}+x_{6}^{2}x_{7}+x_{2}x_{7}^{2}+x_{4}x_{7}^{2}-x_{5}x_{7}^{
      2}+x_{7}^{3}+x_{0}^{2}x_{8}-x_{2}^{2}x_{8}-x_{0}x_{7}x_{8}-x_{2}x_{7}x_{8}+x_{2}x_{8}^{2}+x_{7}x_{8}^{2}-x_{8}^{3}+x_{0
      }^{2}x_{9}+x_{2}^{2}x_{9}-x_{2}x_{3}x_{9}-x_{0}x_{6}x_{9}-x_{3}x_{6}x_{9}-x_{0}x_{8}x_{9}-x_{2}x_{8}x_{9}+x_{8}^{2}x_{9
      }+x_{2}x_{9}^{2}+x_{6}x_{9}^{2}$, and using Macaulay2~\cite{M2} we see that $F_1 \in K_1^3 \cap K_2^3 \cap K_3^3$ where $K_i^3$ is the kernel of the map
      $$
        \times \theta_i^T: \Big{(}\frac{R}{I_\Gamma + (x_i^4 \st 0 \leq i \leq 9)}\Big{)}_3 \to \Big{(}\frac{R}{I_\Gamma + (x_i^4 \st 0 \leq i \leq 9)}\Big{)}_2.
      $$

    Similarly, using Macaulay2~\cite{M2} one can check that the linear forms 
    \begin{align*}
        L & = x_0 + x_1 + x_2 + x_3 + x_4 + x_5 + x_6 + x_7 + x_8 + x_9 \\
        \theta_4 & = x_{0}+x_{1}+x_{2}-x_{3}+x_{4}-x_{5}-x_{6}+x_{7}-x_{8}+x_{9}\\
        \theta_5 & = -x_{1}+x_{2}+x_{5}-x_{6}-x_{7}+x_{8}-x_{9}
    \end{align*} 
    form an lsop of $I_\Gamma$. Abusing notation one more time, the Macaulay dual generator of $I_\Gamma + (L, \theta_4, \theta_5)$ is the polynomial $F_2 = 
    -3\,x_{1}^{3}-x_{1}^{2}x_{2}+x_{1}x_{2}^{2}+2\,x_{1}^{2}x_{3}+2\,x_{1}x_{2}x_{3}+4\,x_{3}^{3}+4\,x_{1}^{2}x_{4}-4\,x_{1}x_{3}x
       _{4}-4\,x_{3}^{2}x_{4}-4\,x_{1}x_{4}^{2}+4\,x_{3}x_{4}^{2}-4\,x_{4}^{3}-2\,x_{1}^{2}x_{5}-2\,x_{1}x_{2}x_{5}+4\,x_{1}x_{4}x_{5}-
       4\,x_{3}^{2}x_{6}+4\,x_{3}x_{4}x_{6}-4\,x_{4}^{2}x_{6}+4\,x_{3}x_{6}^{2}-4\,x_{4}x_{6}^{2}-4\,x_{6}^{3}-8\,x_{0}^{2}x_{7}+8\,x_{
       4}^{2}x_{7}+2\,x_{2}x_{5}x_{7}-4\,x_{4}x_{5}x_{7}-4\,x_{0}x_{6}x_{7}+4\,x_{4}x_{6}x_{7}+8\,x_{0}x_{7}^{2}-8\,x_{4}x_{7}^{2}+2\,x
       _{5}x_{7}^{2}+4\,x_{0}x_{7}x_{8}-2\,x_{2}x_{7}x_{8}-2\,x_{7}^{2}x_{8}+8\,x_{0}^{2}x_{9}-x_{2}^{2}x_{9}-2\,x_{2}x_{3}x_{9}+4\,x_{
       3}^{2}x_{9}+4\,x_{0}x_{6}x_{9}-4\,x_{3}x_{6}x_{9}+4\,x_{6}^{2}x_{9}-4\,x_{0}x_{8}x_{9}+2\,x_{2}x_{8}x_{9}-8\,x_{0}x_{9}^{2}+x_{2
       }x_{9}^{2}+2\,x_{3}x_{9}^{2}-4\,x_{6}x_{9}^{2}+2\,x_{8}x_{9}^{2}+7\,x_{9}^{3}$. By the same argument, $F_2$ is in the kernel of the map 
    $$
        \times L^T: \frac{R}{I_\Gamma + (x_i^4 \st 0 \leq i \leq 9)} \to \frac{R}{I_\Gamma + (x_i^4 \st 0 \leq i \leq 9)}.
    $$ 
    In particular, if $A = \frac{R}{I_\Gamma + (x_i^4 \st 0 \leq i \leq 9)}$ has the WLP, it must be that $\HF_A(2) \leq \HF_A(3)$. 
\end{example}

As we will soon see, even though the algebra $A$ from~\cref{e:sphereinversewlp} satisfies 
    $$
        \HF_A(2) = 34 \leq 74 = \HF_A(3),
    $$
it fails the WLP in later degrees.

We finish this section with a vast generalization of step 1 in the proof of~\cref{t:monomialaciWLP}

\begin{proposition}[{\bf Coinvariant stresses and Lefschetz properties}]\label{p:generalstep1}
    Let $\Delta$ be a $\K$ be a field and $\Delta$ a $d$-dimensional complex such that $\tilde H_d(\Delta; \K) \neq 0$. Set $R = \K[x_1, \dots, x_n]$, $L = x_1 + \dots + x_n \in R$ and $J = I_\Delta + (x_1^{d + 2}, \dots, x_n^{d+2})$. Then the multiplication map 
    $$
        \times L: \Big{(}\frac{R}{J}\Big{)}_{\binom{d+2}{2} - 1} \to \Big{(}\frac{R}{J}\Big{)}_{\binom{d+2}{2}}
    $$
    is not surjective.
\end{proposition}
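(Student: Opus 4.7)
The plan is to invoke the inverse systems characterization of failure of surjectivity (Theorem \ref{t:wlpinversesystems}) together with the explicit top coinvariant stress produced in Theorem \ref{l:symmetriccirc}. By Lemma \ref{l:samematrix}, showing that $\times L: (R/J)_{\binom{d+2}{2}-1} \to (R/J)_{\binom{d+2}{2}}$ is not surjective amounts to exhibiting a nonzero element in the inverse system $J^{-1}_{-\binom{d+2}{2}}$ whose monomials use each variable to a power strictly less than $d+2$, which is also annihilated under contraction by $L$.

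The candidate witness is the polynomial
$$
F_\Delta \;=\; \sum_{i=1}^{s} c_i\, x_{F_i}\, V(F_i)
$$
built from a nonzero top cycle $F = \sum c_i F_i \in \tilde H_d(\Delta; \K)$, which exists by hypothesis. First I would recall from Theorem \ref{l:symmetriccirc} that $F_\Delta$ is nonzero, sits in degree $\binom{d+2}{2}$, and satisfies $m \circ F_\Delta = 0$ for every monomial generator $m$ of $I_\Delta$ as well as $e_k \circ F_\Delta = 0$ for $k = 1, \dots, d+1$. In particular, $L \circ F_\Delta = e_1 \circ F_\Delta = 0$.

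Next I would observe the crucial degree bound: the monomials appearing in $x_{F_i} V(F_i)$ are, by the expansion in Theorem \ref{l:symmetriccirc}, of the form $x_{\sigma(F_i)}^{\mu}$ with $\mu = (d+1, d, \dots, 1)$. Hence every variable appears with exponent at most $d+1 < d+2$ in each monomial of $F_\Delta$, which forces $x_j^{d+2} \circ F_\Delta = 0$ for every $j$. Combining this with the previous step yields $g \circ F_\Delta = 0$ for every generator $g$ of $J + (L) = I_\Delta + (L, x_1^{d+2}, \dots, x_n^{d+2})$.

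Finally, I would apply Theorem \ref{t:wlpinversesystems} with $I = I_\Delta$ and $(f_1, \dots, f_d) = (e_2, \dots, e_{d+1})$; by Lemma \ref{l:ci} the ideal $I_\Delta + (L, e_2, \dots, e_{d+1}) = I_\Delta + (e_1, \dots, e_{d+1})$ is artinian, so the hypotheses are satisfied, and $F_\Delta$ is the required witness in $(I_\Delta + (L, e_2, \dots, e_{d+1}))^{-1}_{-\binom{d+2}{2}}$ with monomial exponents bounded by $d+1$. This forces $\times L$ to fail surjectivity in the stated degree. There is no real obstacle here beyond carefully matching the power bound in Theorem \ref{t:wlpinversesystems} to the exponent vector $\mu$ in Theorem \ref{l:symmetriccirc}; the substance of the argument is the identification of $F_\Delta$ as both a top coinvariant stress and an inverse-system obstruction to surjectivity.
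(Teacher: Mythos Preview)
Your proposal is correct and follows essentially the same approach as the paper: both take the top coinvariant stress $F_\Delta$ from Theorem~\ref{l:symmetriccirc}, observe that its monomials have exponents at most $d+1$ so that $x_i^{d+2}\circ F_\Delta = 0$, and then apply Theorem~\ref{t:wlpinversesystems} with the sequence $e_2,\dots,e_{d+1}$. Your write-up is simply more detailed, spelling out the verification that $L\circ F_\Delta = e_1\circ F_\Delta = 0$ and invoking Lemma~\ref{l:ci} for artinianness, but the underlying argument is identical.
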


\begin{proof}
    The result follows by taking the nonzero polynomial $F_\Delta \in \Sco_{\binom{d + 2}{2}}(\Delta)$ from~\cref{l:symmetriccirc}. Notice that $x_i^{d + 2} \circ F_\Delta = 0$ for every $i$, hence we may apply~\cref{t:wlpinversesystems} where the sequence of elements is given by the elementary symmetric polynomials $e_2, \dots, e_{d + 1}$.
\end{proof}

\section{$f$-vector inequalities and the WLP of squarefree monomial ideals}
 
In~\cref{s:wlpmoninverse} we showed that lack of surjectivity of a multiplication map in an artinian monomial algebra is directly related to the existence of special sequences of elements (for example, sops). We then observed that this fact alone is not enough to prove failure of WLP. The crucial missing step (as in step 2 of~\cref{t:monomialaciWLP}) is, for some fixed $i$, the inequality $\HF_A(i - 1) \geq \HF_A(i)$. In~\cite{MMN2011} the authors use liaison theory in order to prove this inequality for monomial almost complete intersections in $n$ variables, and $i = \binom{n}{2}$. The goal of this section is to generalize this inequality, putting this result in the context of $f$-vector inequalities of special classes of simplicial complexes. Results in this section show that liaison theory turns out to be an extremely useful tool to prove the existence of unexpected bijections (see~\cref{l:linkageb}) between different sets of combinatorial objects called compositions, which we will soon define.  It is clear from the plethora of methods that have been previously used in the study of Lefschetz properties of monomial ideals (such as liaison theory in~\cite{MMN2011}), that Lefschetz properties in the monomial setting can be a powerful tool to understand inequalities that $f$-vectors of families of complexes must satisfy.

A {\bf composition} of $n$ with $m$ parts is a vector $(a_1, \dots, a_m)$ of nonnegative integers such that $a_1 + \dots + a_m = n$. Given a simplicial complex $\Delta$ and a monomial ideal $J = I_\Delta + (x_1^{a_1},\dots, x_n^{a_n}) \subset R$, we may view a monomial $M \not \in J$ of degree $i$, as a composition of $i$ with at most $\dim \Delta + 1$ nonzero parts, where exponents correspond to parts. Let $a(n, k, l)$ denote the number of compositions $a = (a_1, \dots, a_l)$ of $n$ with $l$ parts such that $1 \leq a_i \leq k$. We will need the following lemma.

\begin{lemma}\label{l:hilbertcompositions}
    Let $\Delta$ be a simplicial complex of dimension $d$ and $I_\Delta \subset \K[x_1,\dots, x_n] = R$ its Stanley-Reisner ideal. Then the Hilbert function of $J = I_\Delta + (x_1^{k + 2}, \dots, x_{n}^{k+2})$ is given by 
    $$
        \HF_{\frac{R}{J}}(t) = \sum_{i = 0}^d f_i a(t, k + 1, i + 1).
    $$
    In particular we have 
    $$
        \HF_{\frac{R}{J}}(t - 1) - \HF_{\frac{R}{J}}(t) = \sum_{i = 0}^d f_i (a(t - 1, k + 1, i + 1) - a(t, k + 1, i + 1)).
    $$
\end{lemma}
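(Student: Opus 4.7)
The plan is to compute $\HF_{R/J}(t)$ by identifying an explicit $\K$-basis of $(R/J)_t$ and partitioning it according to the support of the monomials.

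First I would observe that since $J$ is a monomial ideal, a $\K$-basis of $(R/J)_t$ is given by the set of monomials $x_1^{b_1}\cdots x_n^{b_n}$ of degree $t$ that avoid $J$. By the definition of $J$, such a monomial avoids $J$ exactly when (i) its support $\supp(x_1^{b_1}\cdots x_n^{b_n}) = \{j \st b_j > 0\}$ is a face of $\Delta$ (otherwise the squarefree monomial on that support would be a generator of $I_\Delta$ dividing it), and (ii) $b_j \leq k+1$ for every $j$ (otherwise some $x_j^{k+2}$ would divide it).

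Next I would partition this basis by support. For each face $F \in \Delta$, the monomials of degree $t$ whose support is exactly $F$ are in bijection with the positive integer solutions of $\sum_{j \in F} b_j = t$ with the extra constraint $1 \leq b_j \leq k+1$ for every $j \in F$. If $\dim F = i$, then $|F| = i+1$, and this set of solutions has cardinality exactly $a(t, k+1, i+1)$. Grouping faces according to their dimension and summing yields
$$
    \HF_{R/J}(t) = \sum_{F \in \Delta} a(t, k+1, |F|) = \sum_{i=0}^{d} f_i \, a(t, k+1, i+1),
$$
which is the claimed formula (note that the $i=-1$ term, corresponding to the empty face, contributes only in degree $t=0$ and is handled automatically since $a(t,k+1,0)=0$ for $t>0$).

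The second formula is immediate: applying the first identity twice and subtracting term by term distributes over the sum, giving
$$
    \HF_{R/J}(t-1) - \HF_{R/J}(t) = \sum_{i=0}^{d} f_i \bigl( a(t-1, k+1, i+1) - a(t, k+1, i+1) \bigr).
$$
There is no real obstacle here; the statement is essentially a bookkeeping lemma, and the only point requiring care is checking the two conditions for a monomial to avoid $J$ and matching the exponent constraints to the definition of $a(n,k,l)$.
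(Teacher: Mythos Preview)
Your proposal is correct and follows essentially the same argument as the paper: identify the nonzero monomials in $R/J$ as those whose support is a face of $\Delta$ and whose exponents are all at most $k+1$, then count them by grouping according to the dimension of the support. Your write-up is slightly more explicit about why those two conditions characterize avoidance of $J$, but the approach is identical.
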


\begin{proof}
    Nonzero monomials in $R/J$ are of the form $m = x_{i_1}^{a_1}\dots x_{i_s}^{a_s}$ where $a_i \leq k + 1$, $\{i_1, \dots, i_s\} \in \Delta$ and $s \leq d + 1$. In other words, for each of the $f_i$ faces of dimension $i$ of $\Delta$, there are $a(t, k + 1, i + 1)$ nonzero monomials in $R/J$ of degree $t$. The result then follows by taking a sum over all faces of dimensions $0 \leq i \leq d$.      
\end{proof}

Let $b(n, k, l)$ be the number of compositions $b = (b_1, \dots, b_l)$ of $n$ with $l$ parts such that $b_i \leq k$.
Subtracting $1$ from every part of a composition $a$ of $n + l$ with $l$ parts satisfying $1 \leq a_i \leq k + 1$, we get a composition $b$ of $n$ with $l$ parts such that $b_i \leq k$. More specifically, we have the following equality 
\begin{equation}\label{eq:equalityab}
    a(n + l, k + 1, l) = b(n, k, l) \qforevery n,k,l.    
\end{equation}

Note that a standard stars and bars argument shows that $b(n, k, l)$ is the coefficient of $x^n$ in the product 
\begin{equation}\label{eq:ciHS}
    (1 + x + \dots + x^k)^l.    
\end{equation}
In particular, since by~\cref{l:basicHS} equation \eqref{eq:ciHS} is the Hilbert series of a codimension $l$ complete intersection generated in degree $k + 1$, the Hilbert function of the monomial complete intersection $I = (x_1^{k + 1}, \dots, x_l^{k + 1}) \subset \K[x_1, \dots, x_l]$ is given by 
\begin{equation}\label{eq:hilbertcomp}
    HF_{\frac{R}{I}}(n) = b(n, k, l).    
\end{equation}
Equations \eqref{eq:equalityab} and \eqref{eq:hilbertcomp} together with~\cref{t:monomialciWLP} imply the following.
\begin{lemma}\label{l:inequalitiescompositions}
    For any $d \geq 1$, the following holds. 
    $$
        \textstyle a(\binom{d + 2}{2} - 1, d + 1, i) \geq a(\binom{d + 2}{2}, d + 1, i) \qfor i \leq  d.
    $$
\end{lemma}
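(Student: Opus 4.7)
The plan is to rewrite both sides of the inequality in terms of the $b(n,k,l)$ quantities using Equation \eqref{eq:equalityab}, and then interpret them as values of the Hilbert function of a monomial complete intersection, where unimodality and symmetry coming from Stanley's theorem (\cref{t:monomialciWLP}) force the claimed monotonicity.

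First I would apply $a(n+l, k+1, l) = b(n, k, l)$ with $l = i$ and $k = d$ to both sides, obtaining
\[
    a\!\left(\textstyle\binom{d+2}{2} - 1, d+1, i\right) = b\!\left(\textstyle\binom{d+2}{2} - 1 - i,\, d,\, i\right),
\]
\[
    a\!\left(\textstyle\binom{d+2}{2}, d+1, i\right) = b\!\left(\textstyle\binom{d+2}{2} - i,\, d,\, i\right).
\]
By Equation \eqref{eq:hilbertcomp}, $b(n,d,i)$ is exactly the Hilbert function in degree $n$ of the monomial complete intersection $A_i := \K[x_1,\dots,x_i]/(x_1^{d+1}, \dots, x_i^{d+1})$, whose socle degree is $id$. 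By~\cref{t:monomialciWLP}, $A_i$ satisfies the SLP (and in any event is a graded Artinian Gorenstein algebra, so its Hilbert function is symmetric about $id/2$; the SLP forces unimodality). Consequently the sequence $n \mapsto b(n,d,i)$ is non-increasing for $n \geq id/2$.

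The task then reduces to the purely numerical check that
\[
    \textstyle \binom{d+2}{2} - 1 - i \;\geq\; \frac{id}{2} \qforall i \leq d.
\]
Rearranging, this is equivalent to $d(d+3) \geq i(d+2)$, which holds since $i \leq d$ gives $i(d+2) \leq d(d+2) < d(d+3)$. (When $\binom{d+2}{2} - 1 - i > id$ both sides of the original inequality are $0$ and the conclusion is trivial, so that edge case needs no separate argument.)

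There is no real obstacle; the only subtlety is the bookkeeping to make sure the shift by $l = i$ in \eqref{eq:equalityab} is applied consistently, and to confirm that the relevant evaluation point $\binom{d+2}{2}-1-i$ indeed lies weakly past the peak of the unimodal sequence for every $i$ in the stated range. The appeal to \cref{t:monomialciWLP} is what makes the argument essentially a one-liner: once the combinatorial quantities are identified as Hilbert function values of a monomial complete intersection, Stanley's SLP supplies unimodality and Macaulay duality supplies symmetry for free.
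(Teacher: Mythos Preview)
Your proof is correct and follows essentially the same approach as the paper: both arguments translate $a(\cdot,d+1,i)$ into $b(\cdot,d,i)$ via \eqref{eq:equalityab}, reinterpret these as Hilbert function values of the monomial complete intersection $(x_1^{d+1},\dots,x_i^{d+1})$, invoke \cref{t:monomialciWLP} for symmetry and unimodality, and then verify numerically that the relevant degree $\binom{d+2}{2}-1-i$ lies at or past the peak. Your identification of the socle degree as $id$ (and hence the peak at $id/2$) is in fact cleaner than the paper's computation, which uses the slightly stronger threshold $i(d+1)/2$; either way the inequality reduces to $(d-i)\cdot(\text{positive}) \geq 0$.
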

 
\begin{proof}
    First note that by~\eqref{eq:equalityab} we have $a(t, d + 1, j) = b(t - j, d, j)$ and $a(t - 1, d + 1, j) = b(t - j - 1, d, j)$ for any $j > 0$.

    By~\eqref{eq:ciHS} the following equality holds: 
        $$
            HS_{\frac{R}{I}}(q) = b(0, d, i) + b(1, d, i) q +  b(id + i, d, i) q^{id+i},
        $$ 
        where $I = (x_1^{d+1},\dots, x_i^{d + 1}) \subset \K[x_1,\dots, x_i]$. Now since $I$ is a monomial complete intersection, we know the sequence of coefficients of $\HS_{\frac{R}{I}}(q)$ must be symmetric, and by~\cref{t:monomialciWLP}, the sequence is unimodal. In particular, to prove the inequality, we only need to show that the two entries of this sequence we care about appear after its peak. Since the socle degree of $R/I$ is $id + i$, it suffices to show that $\frac{i(d + 1)}{2} \leq t - 1 - i$. Indeed, since $d \geq i$ we have
        \begin{align*}
            \frac{(d + 1)(d + 2)}{2} - \frac{i(d + 1)}{2} - i - 1 & = \frac{(d + 1)(d + 2) - i(d + 3) - 2}{2} \\
            & = \frac{d(d + 3) - i(d + 3)}{2} \\
            & = \frac{(d + 3)(d - i)}{2} \\
            & \geq 0.
        \end{align*} 
        We have shown that 
        $$
            \textstyle a(\binom{d + 2}{2} - 1, d + 1, i) = b(\binom{d + 2}{2} - i - 1, d, i) \geq b(\binom{d + 2}{2} - i, d, i) = a(\binom{d + 2}{2}, d + 1, i) \qfor i \leq d.
        $$
\end{proof}

For the next two lemmas, we need the notion of a basic double linkage from liaison theory. Let $J \subset I \subset R = \K[x_1, \dots, x_n]$ be homogeneous ideals such that $\htt J = \htt I - 1$. Let $\ell \in R$ be a linear form such that $J \colon \ell = \ell$. Then the ideal $I' := \ell I + J$ is called a {\bf basic double link} of $I$. For more details on liaison theory and basic double linkage, see~\cite{KMMNP2001}.

\begin{lemma}[{\cite[Lemma 4.8]{KMMNP2001},\cite[Lemma 2.6]{MMN2011}}]\label{l:linkage}
    Let $J \subset I \subset R$ be homogeneous ideals such that $\htt J = \htt I - 1$, and a linear form $\ell$ such that $I' = \ell I + J$ is a basic double link of $I$. Then the following equality holds for every $j$.
    $$
        \HF_{\frac{R}{I'}}(j) = \HF_{\frac{R}{I}}(j - 1) + \HF_{\frac{R}{J}}(j) - \HF_{\frac{R}{J}}(j - 1).
    $$
\end{lemma}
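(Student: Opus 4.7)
The plan is to prove the identity by constructing a short exact sequence of graded $R$-modules
$$
0 \to J(-1) \to I(-1)\oplus J \to I' \to 0
$$
and then applying additivity of Hilbert functions. The hypothesis built into the definition of a basic double link, namely that $\ell$ is a non-zerodivisor on $R/J$ (i.e.\ $J\colon \ell = J$), will be the only substantive input; the codimension condition $\htt J = \htt I - 1$ is not needed for this particular numerical identity.

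The maps are $\phi\colon a \mapsto (a, -\ell a)$ on the left, which is well-defined into $I(-1)\oplus J$ because $J \subset I$, and $\psi\colon (a,b) \mapsto \ell a + b$ on the right. Surjectivity of $\psi$ is immediate from $I' = \ell I + J$, and the composition $\psi\circ\phi$ is zero by a direct check. The key step is the reverse inclusion $\Ker\psi \subseteq \im\phi$: if $\ell a + b = 0$ with $a \in I$ and $b \in J$, then $\ell a = -b \in J$, and the non-zerodivisor hypothesis forces $a \in J$, so $(a,b) = (a, -\ell a) = \phi(a)$. Injectivity of $\phi$ is immediate from its first coordinate.

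From the short exact sequence, additivity of Hilbert functions in degree $j$ yields
$$
\HF_{I'}(j) = \HF_I(j-1) + \HF_J(j) - \HF_J(j-1).
$$
Substituting $\HF_M(k) = \HF_R(k) - \HF_{R/M}(k)$ for each $M \in \{I, J, I'\}$ makes the $\HF_R$ terms cancel (one copy of $\HF_R(j)$ and the two $\HF_R(j-1)$ contributions cancel in pairs), and one reads off the desired identity
$$
\HF_{R/I'}(j) = \HF_{R/I}(j-1) + \HF_{R/J}(j) - \HF_{R/J}(j-1).
$$
The main (and essentially only) obstacle is the identification of $\Ker\psi$, which is exactly where the non-zerodivisor hypothesis enters; the rest is routine graded linear algebra.
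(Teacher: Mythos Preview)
Your argument is correct. The short exact sequence
\[
0 \to J(-1) \xrightarrow{\ \phi\ } I(-1)\oplus J \xrightarrow{\ \psi\ } I' \to 0
\]
is well formed, the grading shifts are set up properly, and you correctly identify that the only nontrivial point is $\Ker\psi\subseteq\im\phi$, which reduces exactly to the hypothesis $J:\ell=J$. The passage from Hilbert functions of the ideals to Hilbert functions of the quotients via $\HF_M(k)=\HF_R(k)-\HF_{R/M}(k)$ is also clean; the $\HF_R$ contributions cancel as you indicate.

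There is nothing in the paper to compare against: the lemma is quoted from \cite{KMMNP2001} and \cite{MMN2011} and used as a black box, with no proof supplied here. Your short-exact-sequence argument is the standard route to this identity and is essentially what one finds in those references. Your parenthetical remark that the height condition $\htt J=\htt I-1$ is not needed for the Hilbert-function identity is also accurate; that hypothesis is part of the liaison-theoretic framework (ensuring $I'$ really is a basic double link in the intended sense) rather than being required for the numerical statement itself.
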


\begin{lemma}\label{l:linkageb}
    The following equality holds for every $d$ 
    $$
        b\Big{(}\textstyle{\binom{d + 2}{2}} - d - 1, d, d\Big{)} - b\Big{(}\textstyle{\binom{d + 2}{2}} - d, d, d\Big{)} = b\Big{(}\textstyle{\binom{d + 2}{2}} - d - 1, d, d + 1\Big{)} - b\Big{(}{\textstyle \binom{d + 2}{2}} - d - 2, d, d + 1\Big{)}.
    $$
    In particular, for $I_1 = (x_1^{d+1}, \dots, x_d^{d + 1}, x_{d+1}), I_2 = (x_1^{d+1}, \dots, x_d^{d + 1}, x_{d+1}^{d + 1}) \subset R = \K[x_1,\dots, x_{d + 1}]$ we have 
    $$ 
        \HF_{\frac{R}{I_1}}\Big{(}{\textstyle \binom{d + 2}{2}} - d - 1\Big{)} - \HF_{\frac{R}{I_1}}\Big{(}\textstyle {\binom{d + 2}{2}} - d\Big{)} = \HF_{\frac{R}{I_2}}\Big{(}\binom{d + 2}{2} - d - 1\Big{)} - \HF_{\frac{R}{I_2}}\Big{(}\binom{d + 2}{2} - d\Big{)}.
    $$
\end{lemma}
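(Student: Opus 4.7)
The plan is to identify the combinatorial counts in the lemma with Hilbert functions of concrete algebras, and then use iterated basic double linkage (\cref{l:linkage}) to reduce everything to the symmetry of the Hilbert function of a monomial complete intersection. First I would note that, since $I_2$ is the complete intersection $(x_1^{d+1},\dots,x_{d+1}^{d+1})$, one has $\HF_{R/I_2}(n) = b(n,d,d+1)$ by \eqref{eq:hilbertcomp}, and since $x_{d+1}\in I_1$ reduces the ambient ring by one variable, $\HF_{R/I_1}(n) = b(n,d,d)$. This immediately shows the equivalence of the combinatorial identity and the Hilbert-function identity at the end of the lemma.

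Next I would exhibit a ladder connecting $I_1$ to $I_2$ by basic double links with a single linear form. Set $J = (x_1^{d+1},\dots,x_d^{d+1}) \subset R$; then $\ell = x_{d+1}$ is a nonzerodivisor on $R/J$, so the basic double link hypotheses of \cref{l:linkage} are satisfied. For the ideals $I^{(j)} := J + (x_{d+1}^j)$, a direct computation gives $\ell I^{(j)} + J = J + (x_{d+1}^{j+1}) = I^{(j+1)}$, with $I^{(1)} = I_1$ and $I^{(d+1)} = I_2$. Applying \cref{l:linkage} at each of the $d$ rungs and telescoping the $\HF_{R/J}$-differences yields
\[
    \HF_{R/I_2}(n) = \HF_{R/I_1}(n-d) + \HF_{R/J}(n) - \HF_{R/J}(n-d).
\]

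The third step is to take the first difference at $n = N := \binom{d+2}{2} - d$. Since $R/J \cong \K[x_{d+1}] \otimes_\K \K[x_1,\dots,x_d]/(x_1^{d+1},\dots,x_d^{d+1})$, partial-sum bookkeeping gives $\HF_{R/J}(n) - \HF_{R/J}(n-1) = b(n,d,d)$, so upon substitution the identity collapses to
\[
    \HF_{R/I_2}(N-1) - \HF_{R/I_2}(N) = b(N-d-1,d,d) - b(N,d,d).
\]
To match the desired right-hand side $b(N-1,d,d) - b(N,d,d)$, it remains to show $b(N-d-1,d,d) = b(N-1,d,d)$. This I would obtain from the Gorenstein symmetry $b(k,d,d) = b(d^2-k,d,d)$ of the Hilbert function of the complete intersection $\K[x_1,\dots,x_d]/(x_1^{d+1},\dots,x_d^{d+1})$ of socle degree $d^2$, after checking the elementary arithmetic $(N-1) + (N-d-1) = 2N - d - 2 = d^2$ using $N = (d^2+d+2)/2$. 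The conceptual obstacle is really just spotting the ladder $I^{(1)},\dots,I^{(d+1)}$ with a uniform choice of linear form; once that is in place the result reduces entirely to telescoping and the well-known symmetry of monomial complete intersections.
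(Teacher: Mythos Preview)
Your proof is correct. The overall strategy matches the paper's---basic double linkage with $\ell = x_{d+1}$ and $J = (x_1^{d+1},\dots,x_d^{d+1})$, followed by an appeal to the symmetry of a monomial complete intersection---but the implementation differs in an interesting way. The paper performs a \emph{single} basic double link, from $I = J + (x_{d+1}^d)$ to $I_2 = J + (x_{d+1}^{d+1})$, and then kills the extra term using that $R/I$ has \emph{odd} socle degree $d(d+1)-1$, forcing the two middle values of its Hilbert function to coincide. You instead build the full ladder $I^{(1)} = I_1,\dots,I^{(d+1)} = I_2$, telescope over $d$ links to obtain $\HF_{R/I_2}(n) = \HF_{R/I_1}(n-d) + \HF_{R/J}(n) - \HF_{R/J}(n-d)$, and close with the symmetry $b(k,d,d)=b(d^2-k,d,d)$ of $R/I_1$. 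Your route has the pleasant feature of connecting $I_1$ and $I_2$ directly without introducing the auxiliary ideal $J+(x_{d+1}^d)$; the paper's route is shorter (one link instead of $d$) and exploits a slightly subtler parity observation. Both reduce the lemma to the same underlying Gorenstein symmetry, just at different rungs of the same ladder.
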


\begin{proof}
    First note that the socle degree of $\frac{R}{I_2}$ is $d(d + 1)$ and since $\binom{d + 2}{2} - d - 1 = \frac{d(d+1)}{2}$, by the symmetry of $\HS_{\frac{R}{I_2}}(t)$ we may rewrite the second statement as 
    $$ 
    \HF_{\frac{R}{I_1}}\Big{(}\textstyle \binom{d + 2}{2} - d\Big{)} - \HF_{\frac{R}{I_1}}\Big{(}\binom{d + 2}{2} - d - 1\Big{)} = \HF_{\frac{R}{I_2}}\Big{(}\binom{d + 2}{2} - d - 1\Big{)} - \HF_{\frac{R}{I_2}}\Big{(}\binom{d + 2}{2} - d - 2\Big{)}
    $$
    The two statements are then equivalent by~\eqref{eq:hilbertcomp}.

    In order to prove the lemma, let $J = (x_1^{d+1}, \dots, x_d^{d+1}), I = J + (x_{d+1}^{d}) \subset R$. Since we have 
    \begin{enumerate}
        \item $\htt J = d$, $\htt I = d + 1$
        \item $J \colon x_{d+1} = J$
    \end{enumerate}
    the ideal $I' := x_{d+1} I + J = (x_1^{d + 1}, \dots, x_{d+1}^{d + 1})$ is a basic double link of $I$. By~\cref{l:linkage} we conclude for every $j$
    $$
        \HF_{\frac{R}{I'}}(j) = \HF_{\frac{R}{I}}(j - 1) + \HF_{\frac{R}{J}}(j) - \HF_{\frac{R}{J}}(j - 1).
    $$
    In particular, since $x_{d + 1}$ is a regular element of $\frac{R}{J}$ we have $\HF_{\frac{R}{J}}(j) - \HF_{\frac{R}{J}}(j - 1) = \HF_\frac{R}{(J, x_{d+1})}(j)$ and so by~\eqref{eq:hilbertcomp} we conclude 
    \begin{equation}\label{eq:extraterm}
        b(j, d, d + 1) - b(j - 1, d, d + 1)  = \HF_{\frac{R}{I}}(j - 1) - \HF_{\frac{R}{I}}(j - 2) + b(j, d, d) - b(j - 1, d, d).        
    \end{equation}
    Finally, the socle degree of $\frac{R}{I}$ is $d(d+1) -1$, which is an odd number and in particular 
    $$
        \HF_{\frac{R}{I}}\Big{(}\frac{d(d+1)}{2}\Big{)} = \HF_{\frac{R}{I}}\Big{(}\frac{d(d+1)}{2} - 1\Big{)}.
    $$
    The result then follows by taking $j = \binom{d + 2}{2} - d$ in equation~\eqref{eq:extraterm}.
\end{proof}

\begin{remark}
    Both~\cref{l:linkageb} and~\cite[Lemma 4.2]{MMN2011} have alternative more elementary proofs, but the liaison theory approach provides exactly the necessary restrictions that make both lemmas useful. This perspective is then useful in order to find such equalities in the first place.
\end{remark}



We are now ready to prove the generalization of step 2 from the proof of~\cref{t:monomialaciWLP}.

\begin{theorem}\label{t:inequalitypseudomanifoldwoboundary}
    Let $\Delta$ be a $d$-dimensional simplicial complex where $d > 0$ and $f_{d - 1} \geq f_d$, $I_\Delta \subset R = \K[x_1,\dots, x_n]$ its Stanley-Reisner ideal and $J = I_\Delta + (x_1^{d + 2}, \dots, x_n^{d + 2})$. Then for $t = \binom{d + 2}{2}$
    $$
        \HF_{\frac{R}{J}}(t - 1) \geq \HF_{\frac{R}{J}}(t).
    $$
    In particular, the inequality holds when $\Delta$ is a $d$-dimensional orientable pseudomanifold without boundary where $d > 0$.
\end{theorem}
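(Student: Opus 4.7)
The plan is to use the $f$-vector expansion from \cref{l:hilbertcompositions} to write
\[
\HF_{\frac{R}{J}}(t - 1) - \HF_{\frac{R}{J}}(t) = \sum_{i = 0}^{d} f_i \bigl(a(t-1, d+1, i+1) - a(t, d+1, i+1)\bigr),
\]
and then show the right hand side is nonnegative. I would split the sum into three ranges. For $i \leq d - 2$, \cref{l:inequalitiescompositions} applied with parameter $i + 1 \leq d - 1$ gives each summand nonnegative, so these terms can be discarded. The real work is in pairing up the $i = d - 1$ term (nonnegative, with coefficient $f_{d-1}$) against the $i = d$ term (which turns out to be nonpositive, with coefficient $f_d$), and using the hypothesis $f_{d-1} \geq f_d$ to conclude that the sum is $\geq 0$.

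To control the $i = d$ term, I would rewrite it via \eqref{eq:equalityab} in terms of the $b$-function at $\binom{d+2}{2} - d - 2$ and $\binom{d+2}{2} - d - 1$. By \eqref{eq:hilbertcomp}, $b(-, d, d+1)$ is the Hilbert function of a complete intersection of socle degree $d(d+1)$ whose peak sits at $\binom{d+1}{2} = \binom{d+2}{2} - d - 1$; hence this term is precisely (peak) $-$ (one below peak), which is nonnegative, so the $i = d$ contribution $f_d\bigl(a(t-1,d+1,d+1) - a(t,d+1,d+1)\bigr)$ is $\leq 0$. The crucial input is then \cref{l:linkageb}, which after translating through \eqref{eq:equalityab} says exactly
\[
a(t-1, d+1, d) - a(t, d+1, d) \;=\; a(t, d+1, d+1) - a(t-1, d+1, d+1).
\]
Combining with $f_{d-1} \geq f_d$ and the fact (from \cref{l:inequalitiescompositions}) that both sides of this identity are nonnegative, the $i = d - 1$ and $i = d$ contributions cancel off to something nonnegative, completing the proof of the inequality.

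For the last sentence, I would observe that if $\Delta$ is a $d$-dimensional pseudomanifold without boundary, then every ridge lies in exactly two facets while each facet contains exactly $d + 1$ ridges; a double count gives $2 f_{d-1} = (d+1) f_d$, so $f_{d-1} = \tfrac{d+1}{2} f_d \geq f_d$ whenever $d \geq 1$, which puts $\Delta$ in the scope of the main inequality.

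The main obstacle is really just recognizing that the hypothesis $f_{d-1} \geq f_d$ is perfectly tailored to let \cref{l:linkageb} absorb the negative $i = d$ contribution into the $i = d - 1$ one; the rest is bookkeeping with the translations between $a(\cdot,\cdot,\cdot)$, $b(\cdot,\cdot,\cdot)$, and Hilbert functions of monomial complete intersections, together with the unimodality/symmetry supplied by \cref{t:monomialciWLP}.
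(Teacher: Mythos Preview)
Your proposal is correct and follows essentially the same approach as the paper: express the difference via \cref{l:hilbertcompositions}, use \cref{l:inequalitiescompositions} to discard the low-index terms, and then invoke \cref{l:linkageb} (translated through \eqref{eq:equalityab}) to rewrite the remaining $i=d-1$ and $i=d$ contributions as $\alpha(f_{d-1}-f_d)$ with $\alpha\geq 0$. The only cosmetic difference is that the paper groups everything with $i\leq d-1$ into a single nonnegative constant $C$ before isolating the last two terms, whereas you stop at $i\leq d-2$; and your separate verification that the $i=d$ term is nonpositive is redundant once \cref{l:linkageb} is in hand, but harmless.
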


\begin{proof}
    By~\cref{l:hilbertcompositions} we know the statement is equivalent to the following inequality:
    \begin{equation}\label{eq:1}
        \sum_{i = 0}^d f_i (a(t - 1, d+1, i + 1) - a(t, d+1, i + 1)) \geq 0.     
    \end{equation}
    By~\cref{l:inequalitiescompositions} we know 
    $$
        a(t - 1, d + 1, i + 1) - a(t, d + 1, i + 1) \geq 0 \qfor i \leq d - 1,
    $$
    hence, since $d > 0$, we may rewrite~\eqref{eq:1} as 
    $$
        C + f_{d - 1}(a(t-1, d+1, d) - a(t, d+1, d)) + f_d(a(t - 1, d+1, d + 1) - a(t, d+1, d + 1)) \geq 0,
    $$
    where both $C$ and the coefficient of $f_{d-1}$ are positive numbers. Our goal then is to show that
    \begin{equation}\label{eq:2}        
        f_{d - 1}(a(t-1, d+1, d) - a(t, d+1, d)) + f_d(a(t - 1, d+1, d + 1) - a(t, d+1, d + 1)) \geq 0.
    \end{equation}
    By~\eqref{eq:equalityab} we may rewrite the left side of~\eqref{eq:2} as 
    \begin{equation}\label{eq:lastone}
        f_{d - 1}(b(t - d - 1, d, d) - b(t - d, d, d)) + f_d(b(t - d - 2, d, d + 1) - b(t - d - 1, d, d + 1)).        
    \end{equation}
    By~\cref{l:linkageb}, equation \eqref{eq:lastone} is equal to 
    $$
        \big{(}b(t - d - 1, d, d) - b(t - d, d, d)\big{)}(f_{d-1} - f_d).
    $$
    By~\cref{l:inequalitiescompositions} and since $\Delta$ satisfies $f_{d - 1} \geq f_d$ we conclude 
    $$
        \HF_{\frac{R}{J}}(t - 1) \geq \HF_{\frac{R}{J}}(t).
    $$
    For the last part of the statement, a $d$-dimensional pseudomanifold without boundary satisfies 
    $$
        f_{d - 1} =\frac{d + 1}{2}f_d,
    $$
    this equality implies $f_{d - 1} \geq f_{d}$ for $d > 0$.
\end{proof}

Under the assumptions of \cref{t:inequalitypseudomanifoldwoboundary}, the transpose of the multiplication map 
$$
    \times L^T: (R/J)_{t} \to (R/J)_{t - 1} \quad \mbox{where } L = x_1 + \dots + x_n \in R
$$
can only have full rank if it is injective. We are now ready to prove our main result, which ties the notion of coinvariant stress from earlier sections, to Lefschetz properties of monomial ideals.
   
\begin{theorem}[{\bf Monomial almost complete intersections revisited}]\label{t:gorensteinfailure}
    Let $\Delta$ be a simplicial complex of dimension $d > 0$ such that $f_{d-1} \geq f_d$ and $\tilde H_d(\Delta; \K) \neq 0$, where $\K$ is a field and $J = I_\Delta + (x_1^{d + 2}, \dots, x_n^{d + 2}) \subset R = \K[x_1,\dots, x_n]$. Then $R/J$ fails the weak Lefschetz property.
\end{theorem}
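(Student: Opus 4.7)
The proof is essentially a two-punch combination of results already built up earlier in the paper. The plan is to show that the specific multiplication map $\times L\colon (R/J)_{t-1} \to (R/J)_t$ at $t = \binom{d+2}{2}$ fails to have maximal rank, where $L = x_1 + \dots + x_n$, and then invoke the observation of Migliore--Mir\'o-Roig--Nagel that for a monomial algebra $R/J$, the WLP holds if and only if this particular linear form $L$ is a weak Lefschetz element.

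First, I would invoke \cref{p:generalstep1}: since $\tilde H_d(\Delta;\K) \neq 0$, the nonzero top coinvariant stress $F_\Delta$ of \cref{l:symmetriccirc} produces, via \cref{t:wlpinversesystems} applied with the regular sequence $e_2,\dots,e_{d+1}$, a nonzero element in the kernel of the transpose map $\times L^T\colon (R/J)_t \to (R/J)_{t-1}$. Equivalently, the map
\[
    \times L\colon (R/J)_{t-1} \longrightarrow (R/J)_t
\]
is \emph{not surjective}. This takes care of the ``Step 1'' part of the proof of \cref{t:monomialaciWLP}.

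Next, I would invoke \cref{t:inequalitypseudomanifoldwoboundary}: the assumptions $d > 0$ and $f_{d-1} \geq f_d$ give exactly the inequality
\[
    \HF_{R/J}(t-1) \;\geq\; \HF_{R/J}(t).
\]
Combining these two facts, the map $\times L\colon (R/J)_{t-1} \to (R/J)_t$ goes from a space of dimension at least that of the target, yet is not surjective; hence it cannot have maximal rank. By the observation of Migliore--Mir\'o-Roig--Nagel (\cite[Proposition 2.2]{MMN2011}), which is recalled in \cref{s:wlpmoninverse}, for a monomial ideal $J$ the algebra $R/J$ has the WLP if and only if $L = x_1+\dots+x_n$ is a weak Lefschetz element. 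Since $L$ fails this at degree $t-1$, we conclude $R/J$ fails the WLP.

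There is essentially no obstacle left once \cref{p:generalstep1} and \cref{t:inequalitypseudomanifoldwoboundary} are in hand, since the argument is just the standard ``non-surjectivity plus non-increasing Hilbert function implies non-maximal rank'' principle. The real work was already done in building those two ingredients: the nonvanishing top homology class was converted into an explicit polynomial in the kernel of $L^T$ via the coinvariant stress $F_\Delta = \sum \e(F_i) x_{F_i} V(F_i)$, and the $f$-vector inequality $f_{d-1} \geq f_d$ was translated, through the compositions identities of \cref{l:hilbertcompositions,l:inequalitiescompositions} and the basic-double-link identity of \cref{l:linkageb}, into the Hilbert-function inequality at the socle-like degree $t$.
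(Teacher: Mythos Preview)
Your proposal is correct and follows essentially the same approach as the paper: invoke \cref{p:generalstep1} (which the paper reproves inline via \cref{l:symmetriccirc} and \cref{l:samematrix}) to get non-surjectivity of $\times L$ at degree $t=\binom{d+2}{2}$, then invoke \cref{t:inequalitypseudomanifoldwoboundary} for the Hilbert-function inequality, and conclude via the Migliore--Mir\'o-Roig--Nagel observation that $L=x_1+\dots+x_n$ suffices to test the WLP for monomial ideals. One small wording note: in your summary the sequence $e_2,\dots,e_{d+1}$ need not be a \emph{regular} sequence on $R/I_\Delta$ in general, but \cref{t:wlpinversesystems} only requires that $I_\Delta+(L,e_2,\dots,e_{d+1})$ be artinian, which is exactly what \cref{l:ci} guarantees.
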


\begin{proof}
    Let $L = x_1 + \dots + x_n = e_1 \in R$. By~\cref{l:samematrix}, we know if a polynomial $F$ is in the kernel of the map 
    $$
        \times L^T: R_i \to R_{i - 1},
    $$
    and the nonzero monomials of $F$ are not in $J$, then $F$ (as an element of $R/J$) is in the kernel of the map 
    \begin{equation}\label{e:1}
        \times L^T: (R/J)_i \to (R/J)_{i - 1}.        
    \end{equation}
    By~\cref{t:universalform}, there exists a coinvariant stress $F_\Delta$ of $\Delta$ of degree $t = \frac{(d + 1)(d + 2)}{2}$ satisfying $e_1 \circ F_\Delta = 0$. Hence by~\cref{l:samematrix} and the comments above, $F_\Delta$ (as an element of $R/J$) is in the kernel of
    $$
    \times L^T (R/J)_t \to (R/J)_{t - 1}.
    $$
    The result then follows by~\cref{t:inequalitypseudomanifoldwoboundary}.

\end{proof}

In more algebraic terms,~\cref{t:gorensteinfailure} implies the following.

\begin{corollary}
    Let $I$ be a Gorenstein squarefree monomial ideal of $R = \K[x_1,\dots, x_n]$ such that $\dim \frac{R}{I} = d + 1$ for some $d > 0 $, and for every $i$, $x_i$ divides at least one generator of $I$. Let $J = I + (x_1^{d + 2}, \dots, x_n^{d + 2})$. Then the algebra $\frac{R}{J}$ fails the WLP.
\end{corollary}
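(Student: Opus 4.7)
The plan is to identify the Gorenstein squarefree monomial ideal $I$ with the Stanley-Reisner ideal $I_\Delta$ of a simplicial complex $\Delta$ and then verify that $\Delta$ satisfies every hypothesis of \cref{t:gorensteinfailure}. Since $I$ is squarefree monomial, there is a unique $\Delta$ with $I = I_\Delta$, and the assumption $\dim R/I = d+1$ translates to $\dim \Delta = d$, with $d > 0$ by hypothesis.

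First I would translate the divisibility hypothesis combinatorially: a variable $x_i$ fails to divide any generator of $I_\Delta$ precisely when every (minimal) non-face avoids $i$, which is equivalent to $\Delta$ being a cone over the vertex $\{i\}$. So the assumption that every $x_i$ divides some generator of $I$ says exactly that $\Delta$ is not a cone over any of its vertices.

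With this in hand, I would invoke \cref{t:gorensteinequivalences}: since $R/I_\Delta$ is Gorenstein and $\Delta$ is not a cone, $\Delta$ is a $d$-dimensional $\K$-homology sphere, equivalently a Cohen-Macaulay orientable pseudomanifold without boundary over $\K$. Orientability then yields $\tilde H_d(\Delta; \K) \neq 0$, and the pseudomanifold identity $f_{d-1} = \frac{d+1}{2}\, f_d$, together with $d \geq 1$, gives $f_{d-1} \geq f_d$. These are exactly the hypotheses of \cref{t:gorensteinfailure}, so applying it immediately shows that $R/J$ fails the WLP.

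There is no real obstacle beyond identifying the non-cone translation of the divisibility hypothesis, which is the combinatorial pivot that makes Stanley's Gorenstein characterization directly applicable; all of the heavy lifting is encapsulated in \cref{t:gorensteinfailure}.
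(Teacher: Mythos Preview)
Your proposal is correct and follows essentially the same route as the paper: identify $I=I_\Delta$, use the divisibility hypothesis to see that $\Delta$ is not a cone, apply \cref{t:gorensteinequivalences} to conclude $\Delta$ is a $\K$-homology sphere (hence an orientable pseudomanifold with $\tilde H_d(\Delta;\K)\neq 0$ and $f_{d-1}=\tfrac{d+1}{2}f_d\geq f_d$), and then invoke \cref{t:gorensteinfailure}.
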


\begin{proof}
    Since $I$ is a Gorenstein squarefree monomial ideal, it is the Stanley-Reisner ideal of some complex $\Delta$. Since for every $i$ the variable $x_i$ divides at least one generator of $I$, we conclude $\Delta$ is not a cone over any of its vertices. By~\cref{t:gorensteinequivalences} we conclude $\Delta$ is a homology sphere, and so $d > 0$ implies
    $$  
        f_{d-1} = \frac{d + 1}{2} f_d \geq f_d.
    $$
    The condition on $\dim \frac{R}{I}$ implies we can apply~\cref{t:gorensteinfailure}.
\end{proof}

Note that there exists only one $0$-dimensional orientable pseudomanifold: the simplicial sphere $\Delta$ on $2$ vertices such that $I_\Delta = (xy) \subset \K[x,y]$. Since $A = \frac{\K[x, y]}{(x^2, xy, y^2)}$ is an artinian algebra of codimension $2$, the algebra $A$ must have the SLP by~\cite[Proposition 3.47]{lefschetzbook}.

\begin{example}[{\bf Orientability and Lefschetz properties}]\label{e:wlporientable}
    Let $\Gamma$ be the simplicial sphere from~\cref{e:pinched} and $J = I_\Gamma + (x_0^4, \dots, x_9^4) \subset R$. Computing the Hilbert series of $\frac{R}{J}$ and $\frac{R}{J + L}$ using Macaulay2~\cite{M2} we get the following table.
    $$
    \begin{array}{c|cccccccccccc} 
        i & 0 & 1 & 2 &3 &4 & 5 & 6 & 7 &8 & 9 & 10 & \\
        \hline
        \HF_{\frac{R}{J}}(i) &1&10 & 34& 74& 120& 144& 136& 96& 48 & 16 & 0 \\
        \HF_{\frac{R}{J + L}}(i) & 1 & 9 & 24& 40& 46& 24& 1& 0& 0 & 0 & 0
        \end{array}
    $$
    As predicted by~\cref{t:gorensteinfailure}, $\frac{R}{J}$ fails the WLP in degree $6$. Checking with Macaulay $2$ we see that the ideal $J_1 = I_\Gamma + (x_0^3, \dots, x_9^3)$ has the WLP while the ideal $J_2 = I_\Gamma + (x_0^5, \dots, x_9^5)$ fails the WLP due to injectivity. The same pattern holds when we replace the simplicial sphere $\Gamma$ by the pinched torus $\Lambda$, also from~\cref{e:pinched}.
\end{example}

\begin{example}[{\bf Lack of orientability and Lefschetz properties}]
    Let $\Sigma$ be the triangulation of $\R \Pp^2$ from~\cref{e:projective}.
    
    Let 
    $$
        J = I_\Delta + (x_1^4, \dots, x_6^4) \subset R = \K[x_1,\dots, x_6] \qand L = x_1 + \dots + x_6 \in R
    $$
    where $\K$ is a field of characteristic $0$. Computing the Hilbert series of $R/J$ and $R/(J,L)$ using Macaulay2~\cite{M2}, we get the following table:
    $$
    \begin{array}{c|cccccccccccc} 
        i & 0 & 1 & 2 &3 &4 & 5 & 6 & 7 &8 & 9 & 10 & \\
        \hline
        \HF_{\frac{R}{J}}(i) &1&6 & 21& 46& 75& 90& 85& 60& 30 & 10 & 0 \\
        \HF_{\frac{R}{J + L}}(i) & 1 & 5 & 15& 25& 29& 15& 0& 0& 0 & 0 & 0
        \end{array}
    $$
    In particular, even though~\cref{t:inequalitypseudomanifoldwoboundary} still holds and we see that $90 > 85$, when the characteristic of the base field is $0$, $\Sigma$ is not orientable and $R/J$ satisfies the WLP. 
    
    When the base field has positive characteristic, the situation is more complicated. Even though the ring $R/J$ fails WLP when the characteristic of the base field is $2$, it fails by a lot more than expected. This behaviour can be seen in the table below, where the characteristic of the base field $\K$ of $R$ is $2$. 
    $$
    \begin{array}{c|cccccccccccc} 
        i & 0 & 1 & 2 &3 &4 & 5 & 6 & 7 &8 & 9 & 10 & \\
        \hline
        \HF_{\frac{R}{J}}(i) &1&6 & 21& 46& 75& 90& 85& 60& 30 & 10 & 0 \\
        \HF_{\frac{R}{J + L}}(i) & 1 & 5 & 15& 25& 30& 21& 10& 0& 0 & 0 & 0
        \end{array}
    $$
    Computations involving other characteristics suggest that over characteristic $p > 0$, the ring $R/J$ has the WLP only when $p > 7$.
    Note also that since $\Sigma$ is not a homology sphere, it is not even guaranteed that $h_3 \neq 0$. Indeed, the $h$-vector of $\Sigma$ is $(1,3,6)$, and so (since $\Sigma$ is Cohen-Macaulay when the characteristic of $\K$ is not $2$) the top coinvariant stresses of highest degree of $\Delta$ correspond to elements in the kernel of the map 
    $$
        \times L^T: (R/J)_5 \to (R/J)_4,
    $$
    but since at these degrees the Hilbert function is still increasing, elements in the kernel do not imply failure of the WLP.
\end{example}

\section{Sometimes failure should be expected}

Several classical results about Lefschetz properties say that large families of artinian algebras satisfy the WLP/SLP. When it comes to failure, it is often the case that the failure can be explained by unexpected geometric, combinatorial or algebraic pheonomena. This dichotomy has led to artinian algebras being expected to satisfy the WLP/SLP, and failure being viewed as rare and special behaviour. The goal of this section is to show that in the setting of this paper, failure of WLP should be expected. To the best of our knowledge, this is the first time probabilistic models are being used to prove algebras in a certain family fail the WLP with high probability. For an example where probabilistic models were used to prove that algebras in special families should be expected to have the WLP, see~\cite{NP2024}.
 
In~\cite{LM2006}, Linial and Meshulam introduced a model to study vanishing of the first homology group of $2$-dimensional complexes over a field $\K$. Their model is a natural generalization of the classical Erdős–Rényi~\cite{ER1960} model from graph theory. The general version of the model for higher dimensional complexes complexes, which we now define, was then considered by Meshulam and Wallace in~\cite{MW2009}.

\begin{definition}[{\bf An Erdős–Rényi model for higher dimensional complexes}, \cite{LM2006,MW2009}]\label{d:model}
    Let $\Delta_n$ be the simplex on $n$ vertices, and $\Delta^{(k)}_n$ denote the $k$-skeleton of $\Delta_n$. In other words, $\Delta^{(k)}_n$ is the complex whose facets are the $k$-faces of $\Delta_n$. Let $d \geq 2$ and $Y_d$ be the set of simplicial complexes $Y$ such that $\Delta^{(d-1)}_n \subseteq Y \subseteq \Delta^{(d)}_n$. We denote by $Y_d(n,p)$ the probability space of complexes $Y_d$ with probability measure 
    $$
        \Pp(Y \in Y_d(n, p)) = p^{f_d} (1-p)^{\binom{n}{d + 1} - f_d},
    $$ 
    where $f_d$ is the number of $d$-dimensional faces of $Y$.
\end{definition}
 
One specific question in the study random simplicial complexes, is to give probabilistic criteria for vanishing of homology groups of a random complex. An important example of this type of question, is to find thresholds on the parameter $p$, so that for a fixed field $\K$,  
$$
    \lim_{n \to \infty} \Pp(Y \in Y_d(n, p) \st \tilde H_d(Y; \K) \neq 0) = 1.
$$
 
This question was explored in the original paper~\cite{ER1960}, where the authors study connectivity and the cycle structure of random graphs, and later on by several authors~\cite{LM2006,K2010,ALLTM2013}. Before stating the result we need, we first set some notation from~\cite{ALLTM2013}.

Let $c_1 = 1$, 
$$
    g_d(x) = (d+1)(x+1)e^{-x} + x(1 - e^{-x})^{d+1},
$$
and for $d > 1$ denote by $c_d$ the unique positive solution of the equation $g_d(x) = d + 1$. The inequality $c_d < d + 1$ is true for every $d$. In~\cite{ALLTM2013} the authors show the following result (for $d > 1$), which is an improvement of an earlier result of Koszlov~\cite{K2010}. The case $d = 1$ is a classical result, see for example~\cite{probmethod}.

\begin{theorem}[{\cite[Theorem 1.2]{ALLTM2013}}]\label{t:probtop}
    Let $\K$ be a field and $d \geq 1$. For a fixed $c > c_d$:
    $$
        \lim_{n \to \infty}\Pp(Y \in Y_d(n, \textstyle\frac{c}{n}) \st \tilde H_d(Y; \K) \neq 0) = 1
    $$
\end{theorem}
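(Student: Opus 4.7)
The plan is essentially to invoke \cite[Theorem 1.2]{ALLTM2013} directly, as the statement above is verbatim their main theorem. A from-scratch proof would follow the random topology strategy developed in that paper and its precursors \cite{LM2006,K2010,MW2009}, which I outline below.

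First I would dispatch the case $d = 1$ separately as a classical Erdős--Rényi result: for $c > c_1 = 1$, the expected number of cycles of length $k$ in $G(n, c/n)$ is asymptotically $c^k / (2k)$, and a standard Poisson-approximation argument shows that with probability tending to $1$ at least one such cycle exists, whence $\tilde H_1(Y; \K) \neq 0$.

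For $d \geq 2$ I would analyze the local structure of a random $Y \in Y_d(n, c/n)$ via a branching-process exploration. Starting from a random $(d-1)$-face $\tau$, add every $d$-face of $Y$ containing $\tau$ and iterate on the newly exposed $(d-1)$-faces. Since each $(d-1)$-face is contained in asymptotically $\mathrm{Poisson}(c)$ many $d$-faces of $Y$, this exploration is coupled with a multi-type branching process whose criticality is captured by the equation $g_d(x) = d + 1$ defining $c_d$. When $c > c_d$ the branching process survives with positive probability, so with high probability there is a nontrivial ``core'' subcomplex $C \subseteq Y$ remaining after iteratively removing $(d-1)$-faces contained in fewer than $d + 1$ top-dimensional faces. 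A short linear-algebra argument then produces a nonzero $d$-cycle supported on $C$, giving a nonzero class in $\tilde H_d(Y; \K)$.

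The main obstacle will be upgrading this branching-process heuristic to a rigorous high-probability statement about the random complex. This requires a second-moment estimate on the number of minimal $d$-cycles (or, dually, a concentration estimate on the size of the core), whose correlation structure is subtle and is the technical heart of \cite{ALLTM2013}. I would expect this step to be the most delicate part of writing a self-contained proof; the rest of the argument, including the local coupling to a branching process and the extraction of a cycle from a surviving core, is fairly standard once the concentration is in hand.
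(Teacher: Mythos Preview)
Your primary plan---invoking \cite[Theorem 1.2]{ALLTM2013} directly---is exactly what the paper does: the statement is quoted without proof as an external result and used as a black box. The from-scratch outline you give is not needed here, since the paper makes no attempt to reprove this theorem.
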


Let $Y \in Y_d$ and 
$$
    A_Y = \frac{R}{I_Y + (x_1^{d+2},\dots, x_n^{d + 2})},
$$
where $R = \K[x_1, \dots, x_n]$.

Our goal is to apply~\cref{t:probtop} to be able to understand the behaviour of the number 
$$
    \Pp(Y \in Y_d(n, \textstyle\frac{c}{n}) \st A_Y \mbox{ fails the WLP}).
$$

Note that in view of~\cref{t:gorensteinfailure}, this number is connected to~\cref{t:probtop}. Our first result is the following.

\begin{theorem}
    Let $\K$ be a field, $\frac{1}{2} \leq s < 1$ and $d = \lceil sn \rceil < n$. Then for a fixed $0 < p < 1$: 
    $$
        \Pp(Y \in Y_d(n, p) \st A_Y \mbox{ fails the WLP}) \geq \Pp(Y \in Y_d(n, p) \st \tilde H_d(Y; \K) \neq 0).
    $$
\end{theorem}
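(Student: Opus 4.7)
The plan is to reduce the probabilistic claim to a deterministic, pointwise application of~\cref{t:gorensteinfailure}. Every complex $Y$ in the support of $Y_d(n,p)$ contains the full $(d-1)$-skeleton of the simplex on $n$ vertices, so $f_{d-1}(Y) = \binom{n}{d}$ is constant across the sample space, while $f_d(Y) \leq \binom{n}{d+1}$ since the $d$-faces are drawn from the $\binom{n}{d+1}$ possibilities. Hence, if I can verify the combinatorial hypothesis $f_{d-1}(Y) \geq f_d(Y)$ for every such $Y$, then~\cref{t:gorensteinfailure} applies to every $Y$ with $\tilde H_d(Y;\K) \neq 0$, yielding that $A_Y$ fails the WLP on this event.

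The main combinatorial ingredient is the binomial inequality $\binom{n}{d} \geq \binom{n}{d+1}$ for $d = \lceil sn \rceil$ with $s \geq 1/2$. Writing the ratio as $(d+1)/(n-d)$, this reduces to $d \geq (n-1)/2$, which follows immediately from $d \geq n/2$; the strict inequality $d < n$ in the hypothesis guarantees the ratio is well-defined. Combining with the previous paragraph, $f_{d-1}(Y) \geq f_d(Y)$ holds deterministically for every $Y$ in the support of $Y_d(n,p)$.

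With these two pieces in place, I would invoke~\cref{t:gorensteinfailure}. For every $Y$ with $\tilde H_d(Y;\K) \neq 0$, the presence of a nontrivial $d$-cycle forces $\dim Y = d$, and the assumption $s \geq 1/2$ together with $n \geq 2$ (forced by $d < n$ and $s \geq 1/2$) gives $d \geq 1$. Thus the full triple of hypotheses of~\cref{t:gorensteinfailure} is satisfied, so $A_Y$ fails the WLP. This yields the event inclusion
\[
\{Y : \tilde H_d(Y;\K) \neq 0\} \ \subseteq\ \{Y : A_Y \text{ fails the WLP}\},
\]
and monotonicity of $\Pp$ gives the stated inequality.

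The argument presents no real obstacle: the substantive content has been packaged into~\cref{t:gorensteinfailure}. What this statement contributes is the observation that the regime $s \geq 1/2$ is exactly what renders the $f$-vector hypothesis of~\cref{t:gorensteinfailure} automatic in the Linial--Meshulam model, converting top-homology nonvanishing into WLP failure at no additional probabilistic cost.
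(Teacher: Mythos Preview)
Your proposal is correct and follows essentially the same approach as the paper: both establish the deterministic inequality $f_d \leq \binom{n}{d+1} \leq \binom{n}{d} = f_{d-1}$ for every $Y$ in the model and then invoke \cref{t:gorensteinfailure} to obtain the event inclusion. Your version is slightly more explicit about the binomial-ratio computation and about checking the side conditions $\dim Y = d$ and $d \geq 1$, but the argument is the same.
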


\begin{proof}
    By the definition of $Y_d$, we know every $Y$ in $Y_d$ contains $\binom{n}{d}$ faces of dimension $d-1$. Denoting by $f_{i}$ the number of $i$-faces of $Y$, we have:
    $$
        \textstyle f_{d} \leq \binom{n}{d + 1} \leq \binom{n}{d} = f_{d - 1},
    $$
    where the second inequality follows from the assumption on $d$ and $s$.
    In particular, by~\cref{t:gorensteinfailure}, for every complex $Y \in Y_d$ such that $\tilde H_d(Y; \K) \neq 0$, the algebra $A_Y$ fails the WLP.
\end{proof}

The main result of this section shows that in the (squarefree) monomial setting, failure of the WLP should (at least in some scenarios) be expected.

\begin{theorem}[{\bf A scenario where failure is expected}]
    Let $d \geq 1$ and fix $c$ such that $c_d < c < d + 1$. Then
    $$
        \lim_{n \to \infty}\Pp(Y \in Y_d(n, \textstyle \frac{c}{n}) \st A_Y \mbox{ fails the WLP}) = 1.
    $$
\end{theorem}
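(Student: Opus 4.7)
The plan is to reduce this to the deterministic criterion Theorem \ref{t:gorensteinfailure}, which guarantees failure of the WLP once the two hypotheses
\begin{enumerate}
    \item[(a)] $f_{d-1}(Y) \geq f_d(Y)$, and
    \item[(b)] $\tilde H_d(Y; \K) \neq 0$
\end{enumerate}
hold simultaneously. Since a complex $Y \in Y_d(n, p)$ contains the entire $(d-1)$-skeleton of $\Delta_n$ by definition, $f_{d-1}(Y) = \binom{n}{d}$ is deterministic, while $f_d(Y)$ is binomial with parameters $\binom{n}{d+1}$ and $p = c/n$. I would first show that (a) holds asymptotically almost surely (a.a.s.), and then invoke Theorem \ref{t:probtop} for (b), finishing with a union bound.

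For step (a), a direct calculation gives
$$
    \mathbb{E}[f_d(Y)] = \binom{n}{d+1}\cdot\frac{c}{n} = \binom{n}{d}\cdot\frac{(n-d)c}{n(d+1)},
$$
which, since $c < d+1$, is smaller than $\binom{n}{d} = f_{d-1}(Y)$ by a multiplicative factor tending to $c/(d+1) < 1$; in particular the gap $f_{d-1} - \mathbb{E}[f_d]$ is of order $n^d$. On the other hand, $\mathrm{Var}(f_d) \leq \mathbb{E}[f_d] = O(n^d)$, so the standard deviation is at most of order $n^{d/2}$. Chebyshev's inequality then yields
$$
    \Pp\bigl(f_d(Y) > f_{d-1}(Y)\bigr) \to 0 \qfor n \to \infty.
$$

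For step (b), since $c > c_d$ by hypothesis, Theorem \ref{t:probtop} gives
$$
    \lim_{n\to\infty}\Pp\bigl(\tilde H_d(Y; \K) \neq 0\bigr) = 1.
$$
Combining via a union bound, the probability that both (a) and (b) hold tends to $1$, and on this event Theorem \ref{t:gorensteinfailure} applies to conclude that $A_Y$ fails the WLP.

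The only real content is checking that the hypothesis $c < d + 1$ is exactly what forces $\mathbb{E}[f_d]$ to stay below the deterministic value $f_{d-1}$; this is precisely the upper endpoint of the interval $(c_d, d+1)$, so the nonempty interval assumption in the statement is tight for the argument. The main potential obstacle would be if the variance estimate were not strong enough to separate $f_d$ from $f_{d-1}$, but the gap is of order $n^d$ against a standard deviation of order $n^{d/2}$, so Chebyshev is amply sufficient and no sharper concentration (such as Chernoff or a Poisson approximation as in \cite{ALLTM2013}) is needed.
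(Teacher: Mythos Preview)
Your proof is correct and follows essentially the same route as the paper: reduce to the deterministic criterion \cref{t:gorensteinfailure}, invoke \cref{t:probtop} for condition~(b), and show condition~(a) holds a.a.s.\ via concentration of the binomial variable $f_d$, combining with a union bound. The only difference is that the paper uses a Chernoff bound where you use Chebyshev; as you note, the gap of order $n^d$ against a standard deviation of order $n^{d/2}$ makes the second-moment method amply sufficient, so your choice is if anything the more economical one.
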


\begin{proof}
    Since $d > 0$, we may apply~\cref{t:gorensteinfailure} to conclude that for every $n$
    \begin{equation}\label{eq:prob1}
        \Pp(Y \in Y_d(n, \textstyle \frac{c}{n}) \st A_Y \mbox{ fails the WLP}) \geq \Pp(Y \in Y_d(n, \frac{c}{n}) \st f_{d - 1} \geq f_d \qand \tilde H_d(Y; \K) \neq 0).        
    \end{equation}
    Let $E_1$ denote the event $\{Y \in Y_d(n, \frac{c}{n}) \st f_{d-1} \geq f_d\}$ and $E_2$ the event $\{Y \in Y_d(n, \frac{c}{n}) \st \tilde H_d(Y; \K) \neq 0\}$.
    \cref{t:probtop} and~\eqref{eq:prob1} imply for some fixed field $\K$
    \begin{align}\label{eq:prob2}
        \lim_{n \to \infty}\Pp(Y \in Y_d(n, \textstyle \frac{c}{n}) \st A_Y \mbox{ fails the WLP}) & \geq \lim_{n \to \infty}\Pp(E_1 \cap E_2) \\
        & = \lim_{n \to \infty}\Pp(E_1) + \lim_{n \to \infty}\Pp(E_2) - \lim_{n \to \infty}\Pp(E_1 \cup E_2) \\
        & =  \lim_{n \to \infty}\Pp(E_1) + 1 - 1\\
        & = \lim_{n \to \infty}\textstyle \Pp(Y \in Y_d(n, \frac{c}{n}) \st f_{d-1} \geq f_d).
    \end{align}
    Now note that $f_d$ is a random variable given by $f_d = \sum_{i = 0}^{\binom{n}{d+1}} X_i$, where each $X_i$ is an independent Bernoulli random variable with parameter $p = \frac{c}{n}$. In particular, $f_d$ has a binomial distribution with expected value $\binom{n}{d + 1} \frac{c}{n}$. This description of $f_d$ implies 
    $$
        \textstyle\Pp(Y \in Y_d(n, p) \st f_d \geq f_{d-1}) = \Pp(f_d \geq f_{d-1}) = \Pp(f_d \geq \binom{n}{d}) = \Pp(f_d \geq (1 + \delta)\binom{n}{d + 1}\frac{c}{n}),
    $$
    where $\delta = \frac{d + 1}{c} \frac{n}{n - d} - 1 > 0$. A standard application of Chernoff's bound (see~\cite[Theorem 4.4]{M2018}) to the binomial distribution then gives us 
    \begin{align}\label{eq:prob3}
        \Pp(f_d \geq f_{d - 1}) & = \textstyle\Pp(f_d \geq (1 + \delta)\binom{n}{d + 1}\frac{c}{n}) \\
        & \leq e^{-\frac{\min(\delta, \delta^2)}{4} \frac{c}{n} \binom{n}{d + 1}}.        
    \end{align}
    Since $d, \delta > 0$ and $\binom{n}{d + 1}$ is a polynomial in $n$ of degree $d + 1$, a calculus argument implies 
    $$ 
        1 = 1 - \lim_{n \to \infty}e^{-\frac{\min(\delta, \delta^2)}{4}\frac{c}{n}\binom{n}{d + 1}} \leq \lim_{n \to \infty} \Pp(f_{d} < f_{d-1}) \leq \lim_{n \to \infty} \Pp(f_d \leq f_{d - 1}) \leq 1.
    $$
\end{proof}
 
\begin{remark}[{\bf The interval $(c_d, d + 1)$ for different values of $d$}]
    Computing approximate values of $c_d$ for different values of $d$ we have 
    $$
    \begin{array}{c|cccccccccccc} 
        d & 2 & 3 & 4 & 5 & 6 & 7 & 8  \\
        \hline
        c_d &2.783&3.91 &4.962&5.984&6.993&7.997&8.998 \\
        \end{array}
    $$
    In particular, as is mentioned in~\cite{ALLTM2013}, $\lim_{d \to \infty} c_d = d + 1$, and $c_d < d + 1$ for every $d$. In other words, even though the interval $(c_d, d + 1)$ is not empty for any $d > 0$, the length of this interval quickly approaches $0$ as $d$ gets larger. 
\end{remark}

\section{Concluding remarks and future work}\label{s:future}

One of the motivations from~\cite{L1996} to study higher stresses via inverse systems, were the known implications of the SLP of an algebra of the form $A = \frac{R}{I_\Delta + (\theta_1,\dots, \theta_{d+1})}$, to the $g$-conjecture, where $\Delta$ is a homology sphere and $\theta_1, \dots, \theta_{d+1}$ is a lsop of $I_\Delta$. Proving that an algebra $A$ satisfies the SLP can have consequences to other areas such as representation theory, and can point to interesting underlying geometric structures. A special case that has been extensive studied from both perspectives, is the coinvariant ring of the symmetric group, $\Kco(\Delta)$, where $\Delta$ is the boundary of the simplex. 

From a representation theoretic perspective, there is a special $\mathfrak{sl}_2$ action on $\Kco(\Delta)$ using a strong Lefschetz element (see for example~\cite[Theorem 3.32]{lefschetzbook} and~\cite{S80}), and from a geometric point of view, $\Kco(\Delta)$ is the cohomology ring of the flag variety (see for example~\cite[Exercise 4, Section 4.D]{hatcher} for the integral case). In view of these classical results, we ask the following, which can be seen as either equivalent, or weaker than the mentioned results (see~\cite[Corollary 1. (2)]{L1996} for the analogue question in the linear setting, which is now known).

\begin{question}[{\bf Coinvariant algebraic $g$-conjecture}]\label{q:slpco}
Let $\Delta$ be a $\K$-homology sphere. Does the ring $\Kco(\Delta)$ satisfy the SLP? In other words, for a general linear form $\ell$, does the following equality hold?
$$ 
    \dim \Sco_k(\Delta, \ell) = \max(\dim \Kco(\Delta)_k - \dim \Kco(\Delta)_{k - 1}, 0)
$$
\end{question}

We note that from a purely enumerative point of view,~\cref{q:slpco} does not imply any new interesting inequalities on the $h$-vectors of complexes where the $g$-theorem is known.

\begin{proposition}
    Let $h_i^{\co} = \dim \Kco(\Delta)_i$, where $\Delta$ is a $d$-dimensional simplicial sphere, $t = \binom{d + 2}{2}$ and set $g^{\co} = (g_0^{\co}, \dots, g_{\lceil\frac{t}{2}\rceil}^{\co})$, where $g_0^{\co} = 1$ and $g_i^{\co} = h_i^{\co} - h_{i-1}^{\co}$ for $i > 0$. Then there exists an artinian monomial ideal $J \subset R$ such that 
    $$
        \HS_\frac{R}{J}(q) = g_0^{\co} + \dots + g_{\lceil \frac{t}{2}\rceil}^{\co} q^{\lceil \frac{t}{2} \rceil}.
    $$
    In other words, $(h_0^{\co}, \dots, h_t^{\co})$ is the $h$-vector of a $(t-1)$--dimensional simplicial sphere.
\end{proposition}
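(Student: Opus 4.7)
The plan is to construct an explicit simplicial $(t-1)$-sphere $\Sigma$ whose $h$-vector equals $(h_0^{\co},\dots,h_t^{\co})$, and then invoke the $g$-theorem for spheres. The key observation is that each factor $1+q+\cdots+q^i$ appearing in $[d]_q!$ is the $h$-polynomial of the boundary of the $i$-simplex, so one can hope to realize $\HS_{\Kco(\Delta)}(q)=h_\Delta(q)[d]_q!$ as the $h$-polynomial of a join.

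First I would let $\partial\sigma_i$ denote the boundary of the $i$-simplex on a fresh set of $i+1$ vertices and form the iterated join
\[
\Sigma \;=\; \Delta \ast \partial\sigma_1 \ast \partial\sigma_2 \ast \cdots \ast \partial\sigma_d
\]
on pairwise disjoint vertex sets. Each $\partial\sigma_i$ is an $(i-1)$-sphere, $\Delta$ is a $d$-sphere, and the join of simplicial spheres is a simplicial sphere, so $\Sigma$ is itself a simplicial sphere of dimension $d+\sum_{i=1}^d (i-1)+d=\tfrac{d(d+3)}{2}=t-1$. On the combined vertex set the Stanley-Reisner ideal splits as $I_\Sigma=I_\Delta+I_{\partial\sigma_1}+\cdots+I_{\partial\sigma_d}$, giving $R_\Sigma\cong R_\Delta\otimes_\K R_{\partial\sigma_1}\otimes_\K\cdots\otimes_\K R_{\partial\sigma_d}$ as graded rings. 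Hilbert series therefore multiply, and since the $h$-polynomial of $\partial\sigma_i$ is $1+q+\cdots+q^i$, one obtains $h_\Sigma(q)=h_\Delta(q)[d]_q!=\HS_{\Kco(\Delta)}(q)$ by the corollary to~\cref{l:basicHS}. This identifies the $h$-vector of $\Sigma$ with $(h_0^{\co},\dots,h_t^{\co})$, proving the ``in other words'' assertion.

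To produce the artinian monomial ideal $J$, I would then apply the $g$-theorem for simplicial spheres~\cite{A2018} to $\Sigma$: reducing the Stanley-Reisner ring modulo a generic LSOP and a strong Lefschetz element yields a standard graded artinian $\K$-algebra whose Hilbert function is precisely $(g_0^{\co},\dots,g_{\lceil t/2\rceil}^{\co})$. By Macaulay's theorem on Hilbert functions this sequence is then an M-vector, realized for example by a lex-segment ideal in a polynomial ring; appending all monomials of degree exceeding $\lceil t/2\rceil$ forces the Hilbert series to truncate at the correct degree while keeping $J$ monomial.

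The main outside input is Adiprasito's theorem, which is essentially unavoidable since the statement is a $g$-theoretic consequence transported along the join construction; the remaining steps (dimension count, tensor decomposition of the Stanley-Reisner ring of a join, and Macaulay's theorem) are routine. A notable feature of this approach is that it sidesteps~\cref{q:slpco}: we do not need $\Kco(\Delta)$ itself to satisfy the SLP, only the Stanley-Reisner ring of the auxiliary sphere $\Sigma$, where Adiprasito's theorem directly applies.
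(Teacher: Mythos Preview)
Your argument is correct and closely parallels the paper's, but the packaging is different. The paper works purely at the level of algebras: it takes a linear artinian reduction $A$ of $R/I_\Delta$ (which has the SLP by Adiprasito) and tensors it with the coinvariant ring $B=\K[y_1,\dots,y_{d+1}]/(e_1,\dots,e_{d+1})$ (which has the SLP classically), then invokes the standard fact that the tensor product of two SLP algebras has the SLP; since $\HS_{A\otimes B}=h_\Delta(q)[d]_q!=\HS_{\Kco(\Delta)}$, the conclusion follows. Your join $\Sigma=\Delta\ast\partial\sigma_1\ast\cdots\ast\partial\sigma_d$ is the geometric counterpart of this tensor product: the Stanley-Reisner ring of a join is the tensor product of the factors, so a linear artinian reduction of $R/I_\Sigma$ is precisely an algebra of the form $A\otimes B'$ with the same Hilbert series. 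What your route buys is an explicit simplicial sphere realizing $(h_0^{\co},\dots,h_t^{\co})$ as its $h$-vector, which makes the ``in other words'' clause transparent; what the paper's route buys is that it only needs Adiprasito on $\Delta$ itself (plus the classical SLP of $B$ and the tensor-product lemma), rather than invoking the full $g$-theorem on the larger auxiliary sphere $\Sigma$.
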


\begin{proof}
    We only need to prove that there exists an algebra with the same Hilbert series as $\Kco(\Delta)$ satisfying the SLP.
    
    Since it is known that there exists an artinian reduction of $\frac{R}{I_\Delta}$ by a lsop satisfying the SLP (see~\cite{A2018,PP2020,APP2021}), let $A$ be such an algebra and $\ell_1$ a Lefschetz element of $A$. Let $B = \frac{\K[y_1, \dots, y_d]}{(e_1, \dots, e_d)}$, where $(e_1, \dots, e_d)$ is the ideal generated by the elementary symmetric polynomials, $\K$ the same base field and $\ell_2$ a Lefschetz element for $B$. Then $\ell_1 \otimes_\K 1 + 1 \otimes_\K \ell_2$ is a Lefschetz element for $C = A \otimes_\K B$~\cite[Theorem 3.34]{lefschetzbook}, and the Hilbert series of $C$ is equal to the Hilbert series of $\Kco(\Delta)$.
\end{proof}

In this paper, we used the connections between inverse systems and Lefschetz properties of monomial ideals to give a vast generalization of one of the main results from~\cite{MMN2011}, which says a special class of monomial almost complete intersections fails the WLP. Since~\cite[Conjecture 6.8]{MMN2011}, there have been many interesting connections developed between Lefschetz properties of monomial ideals, and enumerations of combinatorial objects. A special example is the work of Cook II and Nagel~\cite{CN2011}, where they use lozenge tilings to study Lefschetz properties of ideals of the form $(x^{\alpha}, y^{\beta}, z^{\gamma}, x^a y^b z^c)$. 

The interesting combinatorial objects that characterize Lefschetz properties of ideals such as monomial (almost) complete intersections (in $3$ variables), together with the often different behaviour for higher codimension, leads us to the following question.

\begin{question}[{\bf Monomial artinian reductions of Gorenstein squarefree monomial ideals}]
    Let $\Delta$ be a $d$-dimensional (Cohen-Macaulay) orientable pseudomanifold over a field $\K$, and $I_\Delta(a_1, \dots, a_n) = I_\Delta + (x_1^{a_1}, \dots, x_n^{a_n})$. Let 
    $$
        X_\Delta = \{(a_1, \dots, a_n) \in \N^n \st \frac{\K[x_1, \dots, x_n]}{I_\Delta(a_1, \dots, a_n)} \mbox{ has the WLP}\} \subseteq \N^n.
    $$
    Assuming $d \gg 0$, is the set $X_\Delta$ bounded? Is it enough to take $d \geq 4$?
\end{question}

Finally, there are finer invariants that one may consider instead of the WLP. Computational evidence suggests the following.

\begin{question}[{\bf The generic Jordan type of homology-spheres}]
    Let $\Delta$ be a $d$-dimensional $\Q$-homology sphere and $J = I_\Delta + (x_1^{d+2}, \dots, x_n^{d+2})$. Then is there a unique multiplication map by $L = x_1 + \dots + x_n$ that fails to have full rank? Does this map only fail by $1$? More generally, what can be said about the ranks of the maps $\times L^i : \frac{R}{J} \to \frac{R}{J}$? 
\end{question}

\acknowledgement{I would like to thank Martin Winter for insightful conversations on rigidity theory, Margaret Bayer for insightful conversations on $f$-vectors, and
my advisor Sara Faridi for comments that improved the exposition of this paper.}

\bibliographystyle{plain}
\bibliography{bibliography.bib}

\end{document}